\date{} 
\def\COMMENT#1{}
\def\TASK#1{}
\begin{document}
\numberwithin{equation}{section}
\def\noproof{{\unskip\nobreak\hfill\penalty50\hskip2em\hbox{}\nobreak\hfill%
        $\square$\parfillskip=0pt\finalhyphendemerits=0\par}\goodbreak}
\def\endproof{\noproof\bigskip}
\newdimen\margin   
\def\textno#1&#2\par{%
    \margin=\hsize
    \advance\margin by -4\parindent
           \setbox1=\hbox{\sl#1}%
    \ifdim\wd1 < \margin
       $$\box1\eqno#2$$%
    \else
       \bigbreak
       \hbox to \hsize{\indent$\vcenter{\advance\hsize by -3\parindent
       \sl\noindent#1}\hfil#2$}%
       \bigbreak
    \fi}
\def\proof{\removelastskip\penalty55\medskip\noindent{\bf Proof. }}
\def\C{\mathcal{C}}
\def\eps{\varepsilon}
\def\epszero{\eps_0}
\def\ex{\mathbb{E}}
\def\prob{\mathbb{P}}
\def\eul{{\rm e}}
\def\cP{\mathcal{P}}
\def\cM{\mathcal{M}}
\newtheorem{firstthm}{Proposition}[section]
\newtheorem{thm}[firstthm]{Theorem}
\newtheorem{prop}[firstthm]{Proposition}
\newtheorem{fact}[firstthm]{Fact}
\newtheorem{lemma}[firstthm]{Lemma}
\newtheorem{cor}[firstthm]{Corollary}
\newtheorem{problem}[firstthm]{Problem}
\newtheorem{defin}[firstthm]{Definition}
\newtheorem{conj}[firstthm]{Conjecture}
\newtheorem{claim}[firstthm]{Claim}\def\i{(i_1,i_2,i_3,i_4)}
\def\I{i_1,i_2,i_3,i_4}
\def\Ifive{i_1,i_2,i_3,i_4,i_5}
\def\Ip{i'_1,i'_2,i'_3,i'_4}
\def\Ipfive{i'_1,i'_2,i'_3,i'_4,i'_5}
\title[Proof of the $1$-factorization \& Hamilton decomposition conjectures III]{Proof of the $1$-factorization and Hamilton decomposition conjectures III: approximate decompositions}
\author{B\'ela Csaba, Daniela K\"uhn, Allan Lo, Deryk Osthus and Andrew Treglown}
\thanks {The research leading to these results was partially supported by the  European Research Council
under the European Union's Seventh Framework Programme (FP/2007--2013) / ERC Grant
Agreement no. 258345 (B.~Csaba, D.~K\"uhn and A.~Lo), 306349 (D.~Osthus) and 259385 (A.~Treglown).
The research was also partially supported by the EPSRC, grant no. EP/J008087/1 (D.~K\"uhn and D.~Osthus).
}
\date{\today} 

\maketitle
\begin{abstract}
In a sequence of four papers, we prove the following results (via a unified approach) for all sufficiently large $n$:
\begin{itemize}
\item[(i)] [\emph{$1$-factorization conjecture}]
Suppose that $n$ is even and $D\geq 2\lceil n/4\rceil -1$. 
Then every $D$-regular graph $G$ on $n$ vertices has a decomposition into perfect matchings.
Equivalently, $\chi'(G)=D$.

\item[(ii)] [\emph{Hamilton decomposition conjecture}]
Suppose that $D \ge   \lfloor n/2 \rfloor $.
Then every $D$-regular graph $G$ on $n$ vertices has a decomposition
into Hamilton cycles and at most one perfect matching.

\item[(iii)] We prove an optimal result on the number of edge-disjoint Hamilton cycles in a graph 
of given minimum degree.
\end{itemize}
According to Dirac, (i) was first raised in the 1950s.
(ii) and (iii)
answer questions of Nash-Williams from 1970.
The above bounds are best possible. In the current paper, we show the following:
suppose that $G$ is close to a complete balanced bipartite graph or to the union of two cliques of equal size.
If we are given a suitable set of path systems which cover a set of `exceptional' vertices and edges of $G$, 
then we can extend these path systems into an approximate decomposition of $G$ into Hamilton cycles
(or perfect matchings if appropriate).
\end{abstract}

\maketitle

\section{Introduction}

\subsection{Background and results}

In a sequence of four papers, we develop a unified approach to prove the following results on Hamilton decompositions and 
$1$-factorizations. 
The first of these results confirms the so-called $1$-factorization conjecture for all sufficiently large graphs.
(A \emph{$1$-factorization} of a graph~$G$ consists of a set of edge-disjoint perfect matchings covering all edges of~$G$.)
This conjecture was first stated explicitly by Chetwynd and Hilton~\cite{1factorization,CH}.
However, they wrote that according to Dirac, it was already discussed in the 1950s.%
\COMMENT{Andy: changed 1950's to 1950s (same in abstract).}

\begin{thm}\label{1factthm}
There exists an $n_0 \in \mathbb N$ such that the following holds.
Let $ n,D \in \mathbb N$ be such that $n\geq n_0$ is even and $D\geq 2\lceil n/4\rceil -1$. 
Then every $D$-regular graph $G$ on $n$ vertices has a $1$-factorization.%
    \COMMENT{So this means that $D\ge n/2$ if $n = 2 \pmod 4$ and $D\ge n/2-1$ if $n = 0 \pmod 4$.}
    Equivalently, $\chi'(G)=D$.
\end{thm}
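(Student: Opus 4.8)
The plan is to combine a structural dichotomy with an absorption argument. Since $n$ is even, I first observe that it suffices to decompose $G$ into Hamilton cycles together with at most one perfect matching: a Hamilton cycle on an even number of vertices is an even cycle, hence splits into two perfect matchings, so such a decomposition yields $\chi'(G)=D$. Thus when $D$ is even I aim for a Hamilton decomposition and when $D$ is odd for a decomposition into $(D-1)/2$ Hamilton cycles plus one perfect matching; recall $D\ge 2\lceil n/4\rceil-1\ge n/2-1$.

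Next I would fix constants $0<\nu\ll\tau\ll\eps\ll 1$ and split into cases according to the expansion of $G$. If $G$ is a robust $(\nu,\tau)$-outexpander, then since $D\ge n/2$ the Hamilton decomposition theorem for regular robust expanders delivers the required decomposition immediately (the boundary value $D=n/2-1$, which occurs only when $4\mid n$, needs a short separate treatment). If $G$ is not a robust outexpander, then --- since $G$ is $D$-regular with $D$ within $\eps n$ of $n/2$ --- a routine structural argument forces $G$ to be $\eps$-close either to the complete balanced bipartite graph $K_{n/2,n/2}$ or to the disjoint union of two cliques $K_{n/2}\cup K_{n/2}$. These two extremal configurations are the real content.

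In each extremal case the strategy would be: (1) remove from $G$ a sparse spanning ``robustly decomposable'' subgraph $G^{\mathrm{rob}}$ with the absorbing property that $G^{\mathrm{rob}}\cup H$ has a Hamilton decomposition for every sufficiently sparse $H$ on $V(G)$ satisfying suitable degree and parity conditions; (2) locate the bounded set of exceptional vertices (those whose neighbourhood is atypical for the near-bipartite or near-two-cliques structure) and exceptional edges (those on the wrong side of the bipartition, or joining the two cliques), and cover all of them by a bounded number of carefully balanced edge-disjoint path systems $\mathcal{P}_1,\dots,\mathcal{P}_k$; (3) apply the approximate-decomposition result of this paper to extend $\mathcal{P}_1,\dots,\mathcal{P}_k$, using edges of $G\setminus G^{\mathrm{rob}}$, to a family of edge-disjoint Hamilton cycles covering all but a sparse remainder $H$; (4) absorb $H$ with $G^{\mathrm{rob}}$ as in (1), and finally split the Hamilton cycles into perfect matchings.

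The main obstacle I anticipate is steps (2)--(3) in the near-bipartite case. Every Hamilton cycle of a graph close to $K_{n/2,n/2}$ must alternate between the two sides and use exactly $n/2$ crossing edges, so the exceptional path systems cannot be chosen freely: the numbers of vertices and edges they contribute to each side, together with the relevant parities, must be globally balanced so that the graph left over after the approximate decomposition still meets the degree hypotheses of the robust decomposition lemma. Designing the $\mathcal{P}_i$ to cover every exceptional edge while respecting this balance, and verifying that the approximate-decomposition step preserves it, is the delicate core. The near-two-cliques case poses an analogous --- and arguably harder, since no single expander structure is available --- balancing problem between the two cliques. By comparison, the expansion dichotomy, the construction of $G^{\mathrm{rob}}$, and the passage from a Hamilton-cycle decomposition to a $1$-factorization are routine.
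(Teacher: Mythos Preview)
Your outline is essentially correct and matches the overall strategy of the paper series: the trichotomy into robust expander, close to $K_{n/2,n/2}$, and close to two cliques; the use of a robustly decomposable subgraph for absorption; covering exceptional vertices and edges by localized path systems; an approximate decomposition step; and a final absorption. Note, however, that the present paper does \emph{not} itself prove Theorem~\ref{1factthm}. Its contribution is precisely your step~(3): Lemmas~\ref{almostthm} and~\ref{almostthmbip} are the approximate decomposition results (for the two-cliques and bipartite cases, respectively) which take the exceptional path systems as input and output edge-disjoint Hamilton cycles covering almost all of $G[A]+G[B]$ (respectively $G[A,B]$). The structural dichotomy and the robust expander case are handled in~\cite{Kelly,3reg3con}; the construction of the robustly decomposable graph and the exceptional systems, together with the final absorption (your steps (1), (2), (4)), are carried out in~\cite{paper1,paper4} for the two-cliques case and in~\cite{paper2} for the bipartite case. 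So your identification of the balancing of exceptional systems as the delicate core is accurate, but that work lives in the companion papers rather than here.
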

The bound on the degree in Theorem~\ref{1factthm} is best possible.
Nash-Williams~\cite{initconj,decompconj} raised the related problem of finding a Hamilton decomposition 
in an even-regular graph. Here
a decomposition  of an (even-regular) graph~$G$ into Hamilton cycles consists of a set of edge-disjoint Hamilton cycles covering all edges of~$G$.
If $G$ is a regular graph of odd degree, it is natural to ask for a perfect matching in $G$ together with a decomposition of the remaining edges into Hamilton cycles.
\begin{thm} \label{HCDthm} 
There exists an $n_0 \in \mathbb N$ such that the following holds.
Let $ n,D \in \mathbb N$ be such that $n \geq n_0$ and
$D \ge   \lfloor n/2 \rfloor $.
Then every $D$-regular graph $G$ on $n$ vertices has a decomposition into Hamilton cycles and 
at most one perfect matching.
\end{thm}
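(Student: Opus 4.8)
The plan for the proof of Theorem~\ref{HCDthm} is as follows; this is one of the principal theorems of the whole series (proved together with Theorem~\ref{1factthm}), and the main result of the present paper is one of its ingredients rather than the complete argument. I will not separate the parities of $D$ cleanly, but instead arrange the case analysis so that each branch directly yields a decomposition into Hamilton cycles together with at most one perfect matching. Fix a small constant $\eps>0$ and distinguish two cases. If $G$ is a robust expander, then the theorem of K\"uhn and Osthus on Hamilton decompositions of regular robust expanders (in the version that also permits an odd degree, leaving one perfect matching) applies directly. Otherwise, a stability result shows that a $D$-regular non-expander with $D\ge\lfloor n/2\rfloor$ must be $\eps$-close, in edit distance, to one of exactly two extremal graphs: the complete balanced bipartite graph $K_{n/2,n/2}$, or the vertex-disjoint union of two cliques of (almost) equal size. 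In each case one isolates a bounded \emph{exceptional} set of vertices, together with all edges that violate the corresponding near-bipartition (respectively near-clique-partition).

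The heart of the argument, and the point at which this paper enters, is then a three-step scheme carried out separately in each extremal case. \emph{(a)} Set aside a sparse spanning subgraph $G^{\mathrm{rob}}\subseteq G$ that is \emph{robustly decomposable} --- assembled from absorbing gadgets tailored to the extremal structure, and, if $D$ is odd, arranged to carry the eventual perfect matching --- so that any sufficiently well-behaved sparse leftover $H$ on $V(G)$ can be combined with $G^{\mathrm{rob}}$ and decomposed into Hamilton cycles plus at most one perfect matching. \emph{(b)} Construct a family of edge-disjoint path systems in $G-G^{\mathrm{rob}}$ which together cover every exceptional vertex and every exceptional edge and are \emph{suitable} in the technical sense required by the main result of this paper --- in particular balanced enough that, in the bipartite case, each Hamilton cycle eventually obtained crosses the bipartition an even number of times. \emph{(c)} Apply the main result of this paper to $G-G^{\mathrm{rob}}$ together with these path systems, extending them into an \emph{approximate} decomposition of $G-G^{\mathrm{rob}}$ into Hamilton cycles that leaves only a sparse, well-controlled remainder $H$; finally absorb $H$ using $G^{\mathrm{rob}}$ as in \emph{(a)} and take the union of the resulting collections of Hamilton cycles.

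I expect the bipartite extremal case to be the main obstacle. There the Hamilton cycles are essentially forced to alternate between the two sides, so the few edges lying inside a part, and any imbalance between the parts, impose rigid parity and divisibility constraints on how exceptional edges may be grouped into path systems; arranging in step~\emph{(b)} for the path systems to be simultaneously edge-disjoint, to cover all exceptional edges, and to satisfy the balance conditions demanded in step~\emph{(c)} is the delicate point. The two-cliques case is combinatorially milder but needs its own absorbing structure in step~\emph{(a)}, and throughout one must keep all estimates uniform enough to cover the full range $D\ge\lfloor n/2\rfloor$ and both parities. The proof of Theorem~\ref{1factthm} runs along the same lines, with ``perfect matching'' in place of ``Hamilton cycle'' and the bound $D\ge 2\lceil n/4\rceil-1$ encoding the relevant parity condition.
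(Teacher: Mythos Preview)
Your proposal is correct and matches the paper's own account of how Theorem~\ref{HCDthm} is proved across the series: the three-way case split into robust expanders (handled by~\cite{Kelly}), the bipartite-extremal case (handled in~\cite{paper2}), and the two-cliques case (handled in~\cite{paper4,paper1}), with the present paper supplying the approximate-decomposition step you call~\emph{(c)}. Your description of the robustly-decomposable-graph absorber in step~\emph{(a)} and the exceptional path systems in step~\emph{(b)} is also accurate at this level of detail. One small correction of expectation: the two-cliques case in fact occupies two of the four papers (\cite{paper1,paper4}) while the bipartite case takes only one (\cite{paper2}), so the balance of difficulty in the series is the reverse of what you anticipate --- though you are right that within the present paper the bipartite approximate-decomposition lemma (Lemma~\ref{almostthmbip}) is the more involved of the two.
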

Again, the bound on the degree in Theorem~\ref{HCDthm} is best possible. In particular, the theorem settles the problem of 
Nash-Williams for all sufficiently large graphs.

Finally (in combination with~\cite{KLOmindeg}), we also  prove an optimal result on the number of edge-disjoint Hamilton cycles one can guarantee
in a graph of given minimum degree, which (as a special case) answers another question of Nash-Williams.
For a detailed discussion of the results and their background we refer to~\cite{paper1}.

\subsection{Overall structure of the argument}

For all  of our main results, we split the argument according to the structure of the graph $G$ under consideration:
\begin{enumerate}
\item[{\rm (i)}] $G$ is  close to the complete balanced bipartite graph $K_{n/2,n/2}$;
\item[{\rm (ii)}] $G$ is close to the union of two disjoint copies of a clique $K_{n/2}$;
\item[{\rm (iii)}] $G$ is a `robust expander'.
\end{enumerate}
Roughly speaking, $G$ is a robust expander if for every set $S$ of vertices, the neighbourhood of $S$ is at least a little larger than $|S|$,
even if we delete a small proportion of the vertices%
\COMMENT{Andy: added "vertices and"}
 and edges of $G$ (see Section~\ref{sec:rob}).
The main result of~\cite{Kelly} states that every dense regular robust expander
has a Hamilton decomposition.
This immediately implies Theorems~\ref{1factthm} and~\ref{HCDthm} in Case~(iii).
Case~(i) is proved in~\cite{paper2}. 
Case~(ii) is proved in~\cite{paper4,paper1}.
Moreover,~\cite{paper1} also includes a more detailed discussion of the overall structure of the proof.

\subsection{Contribution of the current paper} \label{sketch}
The arguments in~\cite{paper2,paper1}
make use of an `approximate decomposition' result, which  is proved in the current paper:
suppose that $G$ is as in Case (i) or (ii).
Suppose also that we are given a suitable set of path systems which cover a certain set of `exceptional' vertices and edges of $G$, 
then we can extend these path systems into an approximate decomposition of $G$ into Hamilton cycles
(or perfect matchings if appropriate).
The precise statements are given in Lemma~\ref{almostthmbip} for Case (i) and Lemma~\ref{almostthm} for Case (ii).

Roughly speaking, the strategy for the proof of Lemma~\ref{almostthm} is as follows.
By definition, $G$ contains disjoint sets $A$ and $B$ of size about $n/2$ which induce two almost complete graphs.
First we reduce the problem of finding an approximate decomposition of $G$ to that of finding approximate Hamilton decompositions of suitable graphs $G_A^*$ and $G_B^*$.
(Here $G_A^*$ is an almost complete multigraph with vertex set $A$ which contains $G[A]$, and $G_B^*$ is defined similarly.)
Moreover, each Hamilton cycle~$C$ in this approximate decomposition is required to contain a certain path system~$P$.
We find these Hamilton cycles by first extending $P$ into a path system which is `balanced' with respect to a given blown-up cycle.
This balanced path system is extended into a $1$-factor $F$ using edges which wind around the blown-up cycle.
Finally, $F$ is transformed into a Hamilton cycle $C$ using some edges which were set aside earlier.
(In particular, most edges of each Hamilton cycle in our approximate decomposition wind around a blown-up cycle.)%
    \COMMENT{Deryk: new sentence}
A more detailed sketch is given in Section~\ref{sec:sketch}.
The argument for the bipartite case~(i) is more involved but uses similar ideas
(in fact, some intermediate results are used in both cases).

This paper is organized as follows.
After introducing some notation, we state the main results (Lemmas~\ref{almostthm} and~\ref{almostthmbip})
in Section~\ref{state}. After introducing some tools in Section~\ref{tools},
we prove Lemma~\ref{almostthm} in Sections~\ref{systembalanced}--\ref{sec:extendmerge}.
We then prove Lemma~\ref{almostthmbip} using a similar approach in the final section.

\section{Notation}
The digraphs considered in this paper do not contain loops and we allow at most two edges between any pair of distinct vertices,
at most one edge in each direction.
Given a graph or digraph $G$, we write $V(G)$ for its vertex set, $E(G)$ for its edge set, $e(G):=|E(G)|$ for the number of edges in $G$ and $|G|:=|V(G)|$ for the number of vertices in $G$.

If $G$ is a graph and $v$ is a vertex of $G$, we write $N_G(v)$ for the set of all neighbours of~$v$ in~$G$. If $A\subseteq V(G)$,
we write $d_G(v,A)$ for the number of  neighbours of $v$ in $G$ which lie in~$A$.
We write $\delta(G)$ for the minimum degree of $G$, $\Delta(G)$ for its maximum degree
and $\chi'(G)$ for the edge-chromatic number of~$G$. Given $A,B\subseteq V(G)$,
we write $e_G(A)$ for the number of  edges of $G$ which have both endvertices in $A$ and $e_G(A,B)$ for the number of
\emph{$AB$-edges} of $G$, i.e.~for the number of  edges of $G$ which have one endvertex in $A$ and its other endvertex in $B$.
If $A\cap B=\emptyset$, we denote by $G[A,B]$ the bipartite subgraph of $G$
whose vertex classes are $A$ and $B$ and whose edges are all $AB$-edges of $G$. We often omit the index $G$ if the graph $G$ is clear from the context.
An \emph{$AB$-path} is a path having one endvertex in $A$ and its other endvertex in~$B$.
We often view a matching $M$ as a graph (in which every vertex has degree precisely one).%
    \COMMENT{This is different to eg the bipartite paper where a matching us a set of edges. We always use this
def in this paper, ie whether we write $e(M)$ for the number of edges and not $|M|$.}

If $G$ is a digraph, we write $xy$ for an edge directed from $x$ to~$y$. If $xy\in E(G)$, we say that $y$ is an
\emph{outneighbour} of~$x$ and $x$ is an \emph{inneighbour} of~$y$.
We write $d^+_G(x)$ for the \emph{outdegree} of $x$ (i.e.~for the number of outneighbours of~$x$ in~$G$)
and $d^-_G(x)$ for the \emph{indegree} of $x$ (i.e.~for the number of inneighbours of $x$).
We write $d^+_G(x,A)$ for the number of%
\COMMENT{Andy: deleted `all'} 
outneighbours of $x$ lying inside $A$ and define $d^-_G(x,A)$ similarly.
We denote the minimum outdegree of $G$ by $\delta^+(G)$ and the minimum indegree by $\delta^-(G)$.
Given $A,B\subseteq V(G)$, an \emph{$AB$-edge} is an edge with initial vertex in $A$ and final vertex in $B$,
and $e_G(A,B)$ denotes the number of these edges in~$G$. If $A\cap B=\emptyset$, we denote by $G[A,B]$ the bipartite subdigraph of $G$
whose vertex classes are $A$ and $B$ and whose edges are all $AB$-edges of $G$.
By a bipartite digraph $G=G[A,B]$ we mean a digraph which only contains $AB$-edges.%
    \COMMENT{This really means that we don't allow $BA$-edges}
A digraph $H$ is a \emph{$1$-factor} of another digraph $G$ if $V(H)=V(G)$ and every vertex in $H$ has in- and outdegree precisely one in~$H$.

Given a vertex set $V$ and two multigraphs%
   \COMMENT{Have to define this for multigraphs instead of graph since if we take $G+H+F$ then $G+H$ might already be a multigraph.
Also, $G\cup H$ is not defined, since we can get away with using $G+H$ in this paper}
$G$ and $H$ with $V(G),V(H)\subseteq V$, we write $G+H$ for the multigraph whose vertex
set is $V(G)\cup V(H)$ and in which the multiplicity of $xy$ in $G+H$ is the sum of the multiplicities of $xy$ in $G$ and in~$H$
(for all $x,y\in V(G)\cup V(H)$). Similarly, if $\mathcal{H}:=\{H_1,\dots,H_\ell\}$ is a set of graphs, we define
$G+\mathcal{H}:=G+H_1+\dots+H_\ell$. We write $G-H$ for the subgraph of $G$ which is obtained from $G$
by deleting all the edges in $E(G)\cap E(H)$.%
    \COMMENT{So we don't require that $H\subseteq G$ when using this notation. $G-A$ does not seem to be used, so is not defined} 
We say that a graph $G$ has a \emph{decomposition} into $H_1,\dots,H_r$ if $G=H_1+\dots +H_r$ and the $H_i$ are pairwise
edge-disjoint.

A \emph{path system} is a graph $Q$ which is the union of vertex-disjoint paths (some of them might be trivial).
We say that $P$ is a \emph{path in Q} if $P$ is a component of $Q$.
A \emph{path sequence} is a digraph which is the union of vertex-disjoint directed paths (some of them might be trivial).

Let $V_1,\dots,V_k$ be pairwise disjoint sets of vertices and let $C=V_1\dots V_k$ be a directed cycle on these sets.
We say that an edge $xy$ of a digraph $R$ \emph{winds around $C$} if there is some $i$ such that $x\in V_i$ and $y\in V_{i+1}$.
In particular, we say that $R$ \emph{winds around $C$} if all edges of $R$ wind around~$C$.

In order to simplify the presentation, we omit floors and ceilings and treat large numbers as integers whenever this does
not affect the argument. The constants in the hierarchies used to state our results have to be chosen from right to left.
More precisely, if we claim that a result holds whenever $0<1/n\ll a\ll b\ll c\le 1$ (where $n$ is the order of the graph),
then this means that
there are non-decreasing functions $f:(0,1]\to (0,1]$, $g:(0,1]\to (0,1]$ and $h:(0,1]\to (0,1]$ such that the result holds
for all $0<a,b,c\le 1$ and all $n\in \mathbb{N}$ with $b\le f(c)$, $a\le g(b)$ and $1/n\le h(a)$. 
We will not calculate these functions explicitly. Hierarchies with more constants are defined in a similar way.
Given $a,b,c\in\mathbb{R}$, we will write $a = b \pm c$ as shorthand for $ b - c \le a \le b+c$.

\section{Statements of the main results} \label{state}
\subsection{Statement for the two cliques case}
In this section, we state our approximate decomposition result in the case when $G$ is a graph which is close to
the union of two disjoint copies of $K_{n/2}$.
Suppose that $A,A_0,B,B_0$ forms a partition of a vertex set $V$ of size $n$ such that $|A| = |B|$. Let $V_0:=A_0\cup B_0$. Let $A' := A_0 \cup A$ and $B' := B_0 \cup B$.
$V_0$ will be a small set which consists of `exceptional' vertices.
Below, we will define an `exceptional cover', which is a path system which covers the vertices in~$V_0$.
Our aim is to extend such exceptional covers into edge-disjoint Hamilton cycles (or pairs of edge-disjoint perfect matchings).
As an intermediate step, we will consider `exceptional systems',%
   \COMMENT{Daniela: changed `exceptional path systems' to `exceptional systems'}
which are exceptional covers that also contain an appropriate number of
`connections' between $A'$ and $B'$.

Our main result (Lemma~\ref{almostthm}) then states the following: suppose that we are given a graph $G$ so that $G[A]$ and $G[B]$ are almost complete
and that we are given an appropriate set of edge-disjoint exceptional systems. Then we can extend these exceptional systems into a set of edge-disjoint Hamilton cycles
(and possibly perfect matchings) of $G$
covering almost all edges of $G[A]$ and $G[B]$. Moreover, the additional edges are all contained in $G[A]+G[B]$.

More precisely, an \emph{exceptional cover} $J$ is a graph which satisfies the following properties:
\begin{enumerate}[label={(EC{\arabic*})}]
\item $J$ is a path system with $V_0\subseteq V(J)\subseteq V$.
\item $d_J(v) =2 $ for every $v \in V_0$ and $d_J(v) \le 1$ for every $v \in V(J) \setminus V_0$.
\item $e_J(A), e_J(B) = 0$.
\end{enumerate}
We say that $J$ is an \emph{exceptional system with parameter~$\eps_0$}, or an \emph{ES} for short, if $J$ satisfies the following properties:
\begin{enumerate}[label={(ES{\arabic*})}]
	\item $J$ is an exceptional cover.
	\item One of the following is satisfied:
	\begin{itemize}
	\item[(HES)] The number of $AB$-paths in $J$ is even and positive. In this case we say $J$ is a \emph{Hamilton exceptional system}, or \emph{HES} for short.
	\item[(MES)] $e_J(A',B')=0$. In this case we say $J$ is a \emph{matching exceptional system}, or \emph{MES} for short. 
\end{itemize}
	\item $J$ contains at most $\sqrt{\eps_0} n $ $AB$-paths.
\end{enumerate}
Note that by~(EC2) every $AB$-path in $J$ must be a maximal path in~$J$. Moreover, the number of $AB$-paths in $J$ is the number of genuine `connections' between $A$ and $B$
(and thus between $A'$ and $B'$).
If we want to extend $J$ into a Hamilton cycle using only edges induced by $A$ and edges induced by~$B$,
this number clearly has to be even and positive.
Hamilton exceptional systems will always be extended into Hamilton cycles and matching exceptional systems will always be extended into two disjoint even cycles which together span all vertices (and thus consist of two edge-disjoint perfect matchings).

We will need to consider exceptional systems which are `localized' in the sense that their vertices are contained in a small number of
clusters of a given partition. To formalize this, let $K,m\in\mathbb{N}$ and $\eps_0>0$.
A \emph{$(K,m,\eps_0)$-partition $\mathcal{P}$} of a set $V$ of vertices is a partition of $V$ into sets $A_0,A_1,\dots,A_K$
and $B_0,B_1,\dots,B_K$ such that $|A_i|=|B_i|=m$ for all $i\ge 1$ and $|A_0\cup B_0|\le \eps_0 |V|$.
The sets $A_1,\dots,A_K$ and $B_1,\dots,B_K$ are called \emph{clusters} of $\mathcal{P}$
and $A_0$, $B_0$ are called \emph{exceptional sets}. We often write $V_0$ for $A_0\cup B_0$ and think of the
vertices in $V_0$ as `exceptional vertices'. Unless stated otherwise, whenever $\mathcal{P}$ is a $(K,m,\eps_0)$-partition,
we will denote the clusters by $A_1,\dots,A_K$ and $B_1,\dots,B_K$ and the exceptional sets by $A_0$ and $B_0$.
We will also write $A:=A_1\cup\dots\cup A_K$, $B:=B_1\cup\dots\cup B_K$, $A':=A_0\cup A$
and $B':=B_0\cup B$.

Given a $(K,m, \epszero)$-partition $\mathcal{P}$ and $1\le i,i' \le K$, we say that $J$ is an 
 \emph{$ (i,i')$-localized exceptional system} with respect to $\mathcal{P}$
(abbreviated as \emph{$(i,i')$-ES}) if $J$ is an  exceptional system and $V(J)\subseteq V_0 \cup A_{i} \cup B_{i'}$.%
\COMMENT{We don't use $(i,i')$-HES nor $(i,i')$-MES, so they are not defined.}

\begin{lemma}\label{almostthm}
Suppose that $0<1/n \ll \eps_0  \ll 1/K \ll \rho  \ll 1$ and $0 \le \mu \ll 1$,
where $n,K \in \mathbb N$ and $K$ is odd.%
	\COMMENT{previously had $1/K \leq \mu$, but this seems clearer. When reading, need to make sure all estimates involving $\mu$ are ok now}
Suppose that $G$ is a graph on $n$ vertices and $\mathcal{P}$ is a $(K, m, \eps _0)$-partition of $V(G)$.
Furthermore, suppose that the following conditions hold:%
	\COMMENT{Previously we had `$(G[A]+G[B],\mathcal{P})$ is a $(K, m, \eps _0, \eps )$-scheme', but this is not neeeded.}
\begin{itemize}
	\item[{\rm (a)}] $d(v,A_i) = (1 - 4 \mu \pm 4 /K) m $ and $d(w,B_i) = (1 - 4 \mu \pm 4 /K) m $ for all
	$v \in A$, $w \in B$ and $1\leq i \leq K$.
	\item[{\rm (b)}] There is a set $\mathcal J$ which consists of at most $(1/4-\mu - \rho)n$ edge-disjoint exceptional systems with parameter $\eps_0$ in~$G$.%
	\COMMENT{By the definition of exceptional system, $J \subseteq G - G[A] - G[B]$ for all $J\in \mathcal{J}$. So we can omit this
condition (which we had previously).}
	\item[{\rm (c)}] $\mathcal J$ has a partition into $K^2$ sets $\mathcal J_{i,i'}$ (one for all $1\le i,i'\le K$) such that each $\mathcal J_{i,i'}$ consists of precisely $|\mathcal J|/{K^2}$ $(i,i')$-ES with respect to~$\cP$.
   \item[{\rm (d)}] If $\mathcal{J}$ contains matching exceptional systems then $|A'|=|B'|$ is even.
\end{itemize}
Then $G$ contains $|\mathcal J|$ edge-disjoint spanning subgraphs $H_1,\dots,H_{|\mathcal J|}$ which satisfy the following
properties:
\begin{itemize}
\item For each $H_s$ there is some $J_s\in \mathcal{J}$ such that $J_s\subseteq H_s$.
\item If $J_s$ is a Hamilton exceptional system, then $H_s$ is a Hamilton cycle of $G$. If
$J_s$ is a matching exceptional system, then $H_s$ is the edge-disjoint union of two perfect matchings in $G$.
\end{itemize}
\end{lemma}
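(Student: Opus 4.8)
The plan is to follow the strategy sketched in Section~\ref{sketch}: split the problem into two almost independent problems, one on $A'=A_0\cup A$ and one on $B'=B_0\cup B$, reduce each to an approximate Hamilton decomposition of an almost complete graph in which a prescribed bounded path system must be contained in each Hamilton cycle, and solve the latter by blowing up a Hamilton decomposition of the complete graph on the clusters.

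\medskip\noindent\textbf{Splitting the exceptional systems and reducing.} By (EC1)--(EC3), every maximal path of a $J\in\mathcal J$ runs between two vertices of $A\cup B$ with all internal vertices in $V_0$, so these paths are $AA$-, $BB$- and $AB$-paths, and if $J$ is a HES then the number of $AB$-paths is even, positive and at most $\sqrt{\eps_0}n$ by (ES3). For each $J=J_s$ I would fix how the eventual subgraph $H_s$ meets $A'$ and $B'$: pair up the $A$-ends of the $AB$-paths of $J_s$ — or, if $J_s$ is a MES (so $e_{J_s}(A',B')=0$), prescribe that $H_s$ meets $A'$ in a single spanning cycle — the choice being made so that, with the symmetric choice on $B'$ and the $AB$-paths, everything links into one cycle (two vertex-disjoint spanning even cycles in the MES case; here condition~(d) guarantees $|A'|$ and $|B'|$ are even, so the spanning cycles of $A'$ and of $B'$ have even length and each decomposes into two perfect matchings). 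This records, for each $s$, a ``demand'' on $A'$: a bounded path system whose forced edges are those of $J_s$ inside $A'$ and whose required endpoints lie in $A_0\cup A_i$ (using that $\mathcal J_{i,i'}$ consists of $(i,i')$-ES, by (c)), and symmetrically on $B'$. Folding the exceptional vertices and these demands into the picture turns the task into the following: find edge-disjoint Hamilton cycles in an almost complete multigraph $G^*_A$ on $A$ (containing $G[A]$), the $s$-th one containing a prescribed bounded, cluster-localized path system $P$, and symmetrically on $B$. A Hamilton cycle of $G^*_A$ pulls back to the demanded path system on $A'$, and combining the $A'$- and $B'$-path systems with the $AB$-paths of $J_s$ yields $H_s$.

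\medskip\noindent\textbf{The auxiliary problem.} Here I would use that $K$ is odd to fix a decomposition $\mathcal C_1,\dots,\mathcal C_{(K-1)/2}$ of the complete graph on the clusters $A_1,\dots,A_K$ into Hamilton cycles. By~(a) every bipartite graph $G[A_i,A_j]$ is nearly $(1-4\mu)m$-regular, so (after first deleting the edges of all the demands, together with a small flexible subgraph of $G[A]$ reserved for later) it can be almost completely decomposed into perfect matchings; bundling one matching per consecutive cluster-pair of a fixed $\mathcal C_\ell$ produces $1$-factors of $G[A]$ winding around $\mathcal C_\ell$, and across all $\mathcal C_\ell$ one obtains about $(1-4\mu)(K-1)m/2$ edge-disjoint winding $1$-factors, which by~(b) is comfortably more than $|\mathcal J|$. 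Before bundling, for each demand $P$ I would extend $P$ by a bounded number of edges — chosen greedily and kept edge-disjoint over all $s$ — into a path system that is \emph{balanced} with respect to some $\mathcal C_\ell$ (roughly: equally many loose ends entering and leaving each cluster, in the pattern of $\mathcal C_\ell$), so that it can be completed by winding matchings into a $1$-factor $F^s\supseteq P$. Finally, $F^s$ is a disjoint union of cycles winding around $\mathcal C_\ell$, which I would splice into a single Hamilton cycle (two vertex-disjoint spanning even cycles, in the MES case) using edges of the reserved flexible subgraph. The symmetric construction on $B$ together with the reduction above then finishes the proof.

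\medskip\noindent\textbf{Main obstacle.} The hard part is carrying out this construction \emph{simultaneously for all $\Theta(n)$ demands while keeping everything edge-disjoint and all the error terms $\mu$, $1/K$, $\sqrt{\eps_0}$, $\rho$ within the available slack}. One must distribute the prescribed path systems and their balancing extensions across the clusters so evenly — which is precisely what the localization and uniformity in~(a) and~(c) are meant to allow — that $\Theta(m)$ near-$1$-factorizations of the merely near-regular bipartite graphs $G[A_i,A_j]$ can coexist with the already-embedded pieces, and that after each $F^s$ is assembled enough of the reserved flexible subgraph survives, in the right clusters, to merge its cycles. Verifying that all of these counts match up quantitatively is the technical heart of the argument.
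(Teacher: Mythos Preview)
Your proposal is essentially correct and follows the same approach as the paper: reduce to separate problems on $A$ and $B$ via auxiliary matchings $J^*_A$, $J^*_B$ (the paper's Proposition~\ref{prop:ES2}), decompose $G[A]$ along a Walecki decomposition of the cluster graph into blown-up cycles (Lemma~\ref{sysdecom}), balance the localized path systems with respect to the blow-up cycle using a reserved sparse graph (Lemma~\ref{balanceextension}), extend to winding $1$-factors by near-$1$-factorizing the almost regular bipartite pairs (Lemma~\ref{extend1-factor}), and finally merge cycles into Hamilton cycles using another reserved sparse piece (Lemmas~\ref{mergecycles} and~\ref{ordercycle}, packaged as Lemma~\ref{merging}).

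Two small corrections. First, in the merging step on $A$ you always produce a \emph{single} Hamilton cycle $C_A$ on $A$, regardless of whether $J_s$ is a HES or MES; the split into two cycles only happens after you recombine $C_A+C_B-J^*+J$ on all of $V$, where in the MES case the result is a Hamilton cycle on $A'$ together with one on $B'$. Your parenthetical ``(two vertex-disjoint spanning even cycles, in the MES case)'' inside the $A$-step is misplaced. Second, and more substantively, the pairing of the $A$-ends of the $AB$-paths is not enough on its own: the Hamilton cycle $C_A$ must traverse the fictive edges of $J^*_A$ \emph{in a prescribed cyclic order} for the recombination to yield a single Hamilton cycle (Proposition~\ref{prop:ES}). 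This is why the paper works with \emph{ordered} directed matchings $J^*_{A,{\rm dir}}$ and builds ``$V_i$-extensions'' whose terminal vertices $x_1,\dots,x_\ell$ can be visited in order by the final Hamilton cycle (Lemma~\ref{ordercycle}). Your sketch does not flag this ordering constraint, and without it the argument that $C_A+C_B-J^*+J$ is connected would fail.
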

\subsection{Statement for the bipartite case} \label{mainbi}
We now state an analogous result for the case when our graph $G$ is close to the complete balanced bipartite graph $K_{n/2,n/2}$.
Let $\cP:=\{A_0,A_1,\dots,A_K,B_0,B_1,\dots,B_K\}$ be a $(K,m,\eps_0)$-partition of a set $V$ of $n$ vertices.
Define $A,A',B,B'$ as in the previous subsection.
We now define the analogue of an $(i,i')$-localized exceptional system for the bipartite case as follows.

Given $1\leq \I \leq K$ and $\eps_0>0$, an \emph{$\i$-balanced exceptional system with respect to $\cP$ and parameter~$\eps_0$}, or \emph{$\i$-BES} for short, 
is a path system $J$ with $V(J)\subseteq A_0\cup B_0\cup A_{i_1} \cup A_{i_2} \cup B_{i_3} \cup B_{i_4}$
such that the following conditions hold:
\begin{itemize}
\item[(BES$1$)] $d_J(v)=2$ for every vertex $v\in V_0=A_0\cup B_0$ and $d_J(v)\le 1$ for
every vertex $v\in A_{i_1} \cup A_{i_2} \cup B_{i_3} \cup B_{i_4}$.%
   \COMMENT{earlier the paths had length at most 2. But current def seems to be ok. Also, note that
the BES we construct in bip paper have the property that none of these paths has both endpoints
in $A_{i_1} \cup A_{i_2}$ and its midoint in $A_0$ (and the analogue holds with for $B$).}   
\item[(BES$2$)] 
Every edge of $J[A \cup B]$ is either an $A_{i_1}A_{i_2}$-edge or a $B_{i_3}B_{i_4}$-edge.%
	\COMMENT{Allan: had `Every (maximal) path of length one in~$J$ has either one endpoint in $A_{i_1}$ and the other endpoint in $A_{i_2}$
or one endpoint in $B_{i_3}$ and the other endpoint in $B_{i_4}$.'}
\item[(BES$3$)] The edges in $J$ cover precisely the same number of vertices in $A$ as in $B$.%
   \COMMENT{Cannot write $J$ instead of "edges in $J$" here since we don't want to count the vertices of degree 0 which lie in $V(J)$.}
\item[(BES4)] $e(J)\le \eps_0 n$.
\end{itemize}
Note that (BES2) implies that an $(i_1,i_2, i_3,i_4)$-BES does not contain $AB$-edges.
Furthermore, an $\i$-BES is also, for example, an $(i_2,i_1,i_4,i_3)$-BES. We sometimes omit the indices $i_1,i_2,i_3,i_4$ and just refer to
a balanced exceptional system.

We can now state the analogue of Lemma~\ref{almostthm} for the case when $G$ is close to $K_{n/2,n/2}$.

\begin{lemma}\label{almostthmbip}
Suppose that $0<1/n \ll \eps_0  \ll 1/K \ll \rho  \ll 1$ and $0 \leq \mu \ll 1$,
where $n,K \in \mathbb N$ and $K$ is even.
Suppose that $G$ is a graph on $n$ vertices and $\mathcal{P}$ is a $(K, m, \eps _0)$-partition of $V(G)$.
Furthermore, suppose that the following conditions hold:
\begin{itemize}
	\item[{\rm (a)}] $d(v,B_i) = (1 - 4 \mu \pm 4 /K) m $ and $d(w,A_i) = (1 - 4 \mu \pm 4 /K) m $ for all
	$v \in A$, $w \in B$ and $1\leq i \leq K$.
	\item[{\rm (b)}] There is a set $\mathcal J$ which consists of at most $(1/4-\mu - \rho)n$ edge-disjoint balanced exceptional systems with parameter $\eps_0$ in~$G$.
	\item[{\rm (c)}] $\mathcal J$ has a partition into $K^4$ sets $\mathcal J_{i_1,i_2,i_3,i_4}$ (one for all $1\le \I \le K$) such that each $\mathcal J_{\I}$ consists of precisely $|\mathcal J|/{K^4}$ $\i$-BES with respect to~$\cP$.
   \item[{\rm (d)}] For each $v \in A \cup B$ the number of $J \in \mathcal{J}$ such that $v$ is incident with an edge of $J$ is at most $2 \eps_0 n $.%
   \COMMENT{This is a new property. This condition is implied by the fact that $(G[A,B],\mathcal{P})$ is a $(K, m, \eps _0, \eps )$-scheme.
($|A_0 \cup B_0| \le \eps_0 n$ and $\Delta( G[A]),\Delta( G[B]) \le \eps_0 n$.)}
\end{itemize}
Then $G$ contains $|\mathcal J|$ edge-disjoint Hamilton cycles such that each of these Hamilton cycles contains some $J\in \mathcal J$.
\end{lemma}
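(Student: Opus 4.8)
The plan is to follow the strategy developed for Lemma~\ref{almostthm}, replacing the blow-up of a complete graph by the blow-up of a complete \emph{bipartite} graph; several of the intermediate results proved for the two-cliques case are reused here. By~(a), in the bipartite graph $G[A,B]$ every vertex of $A$ sends almost all of its edges into each cluster $B_j$, and symmetrically for vertices of $B$; thus $G[A,B]$ is close to the blow-up of the complete bipartite graph $K_{K,K}$ on the vertex classes $\{A_1,\dots,A_K\}$ and $\{B_1,\dots,B_K\}$. Since $K$ is even, $K_{K,K}$ decomposes into $K/2$ Hamilton cycles $C^1,\dots,C^{K/2}$, each a cyclic ordering of the $2K$ clusters that alternates between $A$-clusters and $B$-clusters (and each pair $(A_i,B_j)$ is consecutive on exactly one $C^t$). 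The aim is to build, around each $C^t$, about $m$ edge-disjoint Hamilton cycles of $G$, each of which winds around $C^t$ apart from a bounded number of `correction' edges and the edges of one prescribed balanced exceptional system. Since $|\mathcal J|\le(1/4-\mu-\rho)n$ and $n\approx 2Km$, a short count shows each $C^t$ can carry up to roughly $(1-4\mu-4\rho)m$ such cycles, so all of $\mathcal J$ can be accommodated.

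First I would split off from $G$, at random, two sparse reservoirs of edges --- one for `balancing' the exceptional systems, one for the final `merging' step --- so that the leftover graph $G'$ still satisfies suitably weakened forms of~(a) and~(d). Using the even partition of $\mathcal J$ in~(c), I would then distribute the members of $\mathcal J$ among $C^1,\dots,C^{K/2}$ and, within each $C^t$, among about $m$ `slots', so that each $C^t$ receives roughly $|\mathcal J|/(K/2)$ exceptional systems spread evenly over its slots, each $J\in\mathcal J_{\I}$ being assigned to some $C^t$ whose cluster-order is compatible with the clusters $A_{i_1},A_{i_2},B_{i_3},B_{i_4}$ that $J$ meets (here one uses that an $\i$-BES is also an $(i_2,i_1,i_4,i_3)$-BES and that $\mathcal J$ is spread evenly over all index tuples).

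Now fix $J\in\mathcal J$, assigned to $C^t$ and a slot. I would (1) extend $J$, using a few edges of the balancing reservoir, into a path system $J^*\supseteq J$ that is \emph{balanced with respect to $C^t$}: for each cluster $X$ on $C^t$, the set of vertices of $X$ that $J^*$ leaves to be matched forwards along $C^t$ has the same size as the set to be matched backwards, and these deficiencies are consistent all around $C^t$. Condition (BES3) is essential here, since it ensures that $J$ is already balanced between $A$ and $B$, which is exactly the balance that $|A|=|B|$ forces on any Hamilton cycle of $G$. Then I would (2) extend $J^*$ into a $1$-factor $F$ of $G'$ whose remaining edges are $AB$-edges winding around $C^t$: between each consecutive pair of clusters on $C^t$ the relevant bipartite graph is near-regular of degree close to $m$, and after step~(1) the uncovered vertices of the two clusters are equinumerous, so the required perfect matchings exist by Hall's theorem. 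Finally I would (3) transform the $1$-factor $F$, a disjoint union of cycles each winding around $C^t$, into a single Hamilton cycle $H$ of $G$ by rerouting through a bounded number of edges of the merging reservoir. Running (1)--(3) for every $J$ in turn while keeping all resulting Hamilton cycles pairwise edge-disjoint requires verifying at each stage that the leftover bipartite graphs between consecutive clusters of the $C^t$ are still near-regular of high degree (so the matchings in~(2) persist) and that neither reservoir has been exhausted; both follow from the constant hierarchy together with the slack $\rho$.

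The main obstacle, just as in the two-cliques case, is the simultaneous, edge-disjoint bookkeeping across all $\approx n/4$ exceptional systems: one must maintain, throughout the iteration, control of the degree of every vertex into every cluster (to keep step~(2) feasible), of the usage of the two reservoirs, and of the even spread of exceptional systems over the slots of each $C^t$; condition~(d) is what prevents a single vertex from being monopolised by $\mathcal J$. A secondary difficulty peculiar to the bipartite setting is that the winding structure produces only $AB$-edges, so one must take extra care that the edges of $J$ lying inside $A$ or inside $B$ --- the $A_{i_1}A_{i_2}$- and $B_{i_3}B_{i_4}$-edges permitted by (BES2) --- together with the balancing edges, can always be threaded into a Hamilton cycle; this is again where (BES3) does the crucial work.
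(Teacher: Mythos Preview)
Your outline captures the overall shape of the argument --- decompose $K_{K,K}$ into $K/2$ Hamilton cycles, distribute the exceptional systems among the resulting cyclic systems, balance, extend to a $1$-factor, merge --- and this is indeed what the paper does. But there is a genuine gap exactly at the point you flag as a ``secondary difficulty''.

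You propose to extend $J$ itself into a path system $J^*\supseteq J$ that is locally balanced with respect to the blown-up cycle $C^t$, and then to wind around $C^t$. The problem is that $C^t$ alternates between $A$-clusters and $B$-clusters, so the $1$-factor in step~(2) and the merging edges in step~(3) are all $AB$-edges. An $A_{i_1}A_{i_2}$-edge of $J$ (permitted by (BES2)) therefore cannot be balanced simply by adding edges from the $AB$-reservoir; for each such edge you must simultaneously correct the in/out count at an $A$-cluster \emph{and} at a $B$-cluster on $C^t$, and you would need matching $BB$-edges to compensate. Invoking (BES3) alone does not produce this; (BES3) only guarantees that the total count of $A$-vertices and $B$-vertices covered by $J$ agree, not that the local balance equations along $C^t$ can be solved using the edges available to you.

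The paper resolves this by a device you do not have: it never tries to thread $J$ itself into the bipartite winding structure. Instead, for each balanced exceptional system $J$ it defines an auxiliary matching $J^*$ consisting entirely of \emph{fictive} $AB$-edges $x_iy_i$ (one for each maximal path of $J$, with the $AA$- and $BB$-paths of $J^*_{AB}$ paired up using (BES3)), together with an order on these edges. One then seeks a directed Hamilton cycle on $A\cup B$ that contains $J^*_{\rm dir}$ and visits its edges in the prescribed order; since $J^*_{\rm dir}$ has only $AB$-edges, the balancing and winding work exactly as in the two-cliques case. Proposition~\ref{CES-H2} then guarantees that replacing $J^*$ by $J$ in this cycle yields a Hamilton cycle of $G$. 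This substitution trick is the missing idea in your proposal, and without it your step~(1) does not go through as stated.
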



\section{Useful results} \label{tools}
\subsection{A Chernoff estimate}
We will use the following Chernoff bound for the binomial 
distribution (see e.g.~\cite[Corollary 2.3]{Janson&Luczak&Rucinski00}).
Recall that a binomial random variable with parameters $(n,p)$ is the sum
of $n$ independent Bernoulli variables, each taking value $1$ with probability $p$
or $0$ with probability $1-p$.

\begin{prop}\label{chernoff}
Suppose $X$ has binomial distribution and $0<a<3/2$. Then
$\mathbb{P}(|X - \mathbb{E}X| \ge a\mathbb{E}X) \le 2 e^{-\frac{a^2}{3}\mathbb{E}X}$.
\end{prop}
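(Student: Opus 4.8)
The plan is to use the standard exponential-moment (Chernoff--Bernstein) method and then reduce everything to two elementary one-variable inequalities. Write $X=\sum_{i=1}^n X_i$ as a sum of independent Bernoulli$(p)$ variables and put $\mu:=\mathbb{E}X=np$; the case $\mu=0$ is trivial, so assume $\mu>0$. Since $\{|X-\mu|\ge a\mu\}\subseteq\{X\ge(1+a)\mu\}\cup\{X\le(1-a)\mu\}$, it suffices by the union bound to prove the two one-sided estimates $\mathbb{P}(X\ge(1+a)\mu)\le e^{-a^2\mu/3}$ and $\mathbb{P}(X\le(1-a)\mu)\le e^{-a^2\mu/3}$, which together give the claimed factor $2$.

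For the upper tail, for any $t>0$ Markov's inequality applied to $e^{tX}$ gives $\mathbb{P}(X\ge(1+a)\mu)\le e^{-t(1+a)\mu}\,\mathbb{E}[e^{tX}]=e^{-t(1+a)\mu}(1-p+pe^{t})^{n}$. Using $1+x\le e^{x}$ with $x=p(e^{t}-1)$ we get $(1-p+pe^{t})^{n}\le e^{\mu(e^{t}-1)}$, and the choice $t=\ln(1+a)>0$ yields the classical bound $\mathbb{P}(X\ge(1+a)\mu)\le\bigl(e^{a}(1+a)^{-(1+a)}\bigr)^{\mu}$. It then remains to verify the inequality $(1+a)\ln(1+a)-a\ge a^{2}/3$ for $0<a<3/2$. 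I would do this by calculus: the function $f(a):=(1+a)\ln(1+a)-a-a^{2}/3$ satisfies $f(0)=0$ and $f'(0)=0$, while $f''(a)=\frac{1}{1+a}-\frac23$ is positive for $a<1/2$ and negative for $a>1/2$; hence $f'$ increases from $0$ and then decreases, so $f$ has no interior local minimum on $(0,3/2)$, and since $f(0)=0$ and $f(3/2)=\tfrac52\ln\tfrac52-\tfrac94>0$ we conclude $f\ge 0$ throughout the interval.

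For the lower tail I would split on the size of $a$. If $1\le a<3/2$, then $(1-a)\mu\le 0$, so $\{X\le(1-a)\mu\}\subseteq\{X=0\}$ and $\mathbb{P}(X\le(1-a)\mu)\le(1-p)^{n}\le e^{-\mu}\le e^{-a^{2}\mu/3}$, where the last step uses $a^{2}<(3/2)^{2}<3$. If $0<a<1$, the same moment argument applied to $e^{-tX}$ with $t=-\ln(1-a)>0$ gives $\mathbb{P}(X\le(1-a)\mu)\le\bigl(e^{-a}(1-a)^{-(1-a)}\bigr)^{\mu}$, and the required inequality $a+(1-a)\ln(1-a)\ge a^{2}/3$ on $(0,1)$ follows because $h(a):=a+(1-a)\ln(1-a)-a^{2}/3$ has $h(0)=h'(0)=0$ and $h''(a)=\frac{1}{1-a}-\frac23>0$ on $(0,1)$, so $h'>0$ and hence $h>0$ there.

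Since the statement is standard (and quoted from the literature), there is no genuine obstacle; the only mildly delicate point is making sure the upper-tail inequality $f\ge 0$ is valid all the way up to $a=3/2$ rather than merely for small $a$, which is exactly why one checks the sign pattern of $f''$ and the endpoint value $f(3/2)>0$. The moment generating function computation and the two single-variable inequalities are the routine parts of the argument.
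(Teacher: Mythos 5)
Your proof is correct. The paper itself does not prove this proposition — it is quoted directly from Janson, {\L}uczak and Ruci\'nski (Corollary~2.3 there) — and your argument is exactly the standard exponential-moment derivation behind that reference: the optimized Markov bounds $\bigl(e^{a}(1+a)^{-(1+a)}\bigr)^{\mu}$ and $\bigl(e^{-a}(1-a)^{-(1-a)}\bigr)^{\mu}$, followed by the two elementary inequalities comparing these exponents with $a^2/3$. You correctly identify and handle the one point where the hypothesis $a<3/2$ actually matters, namely that $(1+a)\ln(1+a)-a\ge a^2/3$ holds on all of $(0,3/2)$ (via the concavity switch of $f'$ at $a=1/2$ and the endpoint check $f(3/2)>0$), and your treatment of the lower tail for $a\ge 1$ via $\mathbb{P}(X=0)\le e^{-\mu}$ is also sound.
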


\subsection{Regular spanning subgraphs}
The following lemma implies that any almost complete balanced bipartite graph has an approximate decomposition into perfect matchings.
The proof is a straightforward application of the MaxFlowMinCut theorem.

\begin{lemma}\label{regularsub}
Suppose that
$0<1/m \ll \eps \ll \rho \ll 1$, that $0 \le \mu \le 1/4$ and that $m, \mu m, \rho m \in \mathbb{N}$.
Suppose that $\Gamma$ is a bipartite graph with vertex classes $U$ and $V$ of size $m$ and with
$(1-\mu-\eps)m \leq \delta (\Gamma) \leq \Delta (\Gamma) \leq (1-\mu+\eps)m$.  Then $\Gamma$ contains a spanning $(1-\mu-\rho)m$-regular subgraph
$\Gamma '$.
In particular, $\Gamma$ contains at least $(1-\mu-\rho)m$ edge-disjoint perfect matchings.
\end{lemma}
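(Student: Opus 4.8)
The plan is to reduce the statement to an integral flow problem and apply the Max-Flow-Min-Cut theorem, as the remark before the lemma suggests. First I would set $d := (1-\mu-\rho)m$; note $d \in \mathbb{N}$ by the integrality hypotheses, and $d < \delta(\Gamma)$ since $\rho \gg \eps$. I would build a directed network $N$ as follows: add a source $s$ with an arc of capacity $d$ to every vertex $u \in U$; orient every edge $uv$ of $\Gamma$ (with $u \in U$, $v \in V$) from $u$ to $v$ and give it capacity $1$; add a sink $t$ with an arc of capacity $d$ from every vertex $v \in V$ to $t$. A spanning $d$-regular subgraph $\Gamma'$ of $\Gamma$ corresponds exactly to an integral $s$–$t$ flow of value $dm$ in $N$: the flow on each edge-arc $uv$ records whether $uv \in E(\Gamma')$, the capacity-$d$ arcs from $s$ force every $u \in U$ to have degree exactly $d$ in $\Gamma'$, and the arcs into $t$ do the same for $V$. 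Since all capacities are integers, Max-Flow-Min-Cut guarantees that if the maximum flow value is $dm$ then it is attained by an integral flow, which is what we need.

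The key step is therefore to show that every $s$–$t$ cut in $N$ has capacity at least $dm$. I would take an arbitrary cut, determined by a set $S$ of vertices on the source side; write $S \cap U = U_1$, $U \setminus U_1 = U_2$, $S \cap V = V_1$, $V \setminus V_1 = V_2$. The cut capacity is
\[
  d|U_2| + e_\Gamma(U_1, V_2) + d|V_1|,
\]
coming respectively from the source arcs to $U_2$, the edge-arcs from $U_1$ to $V_2$, and the arcs from $V_1$ to $t$. If $|U_2| + |V_1| \ge m$ this is already at least $dm$ and we are done, so I may assume $|U_1| + |V_2| > m$, i.e. $|U_1| > m - |V_2| = |V_1|$. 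Now I bound $e_\Gamma(U_1, V_2)$ from below using the minimum degree: each $v \in V_2$ has at most $|U_2|$ non-neighbours in $U$, so $d_\Gamma(v, U_1) \ge \delta(\Gamma) - |U_2| \ge (1-\mu-\eps)m - |U_2|$, whence
\[
  e_\Gamma(U_1, V_2) \ge |V_2|\bigl((1-\mu-\eps)m - |U_2|\bigr).
\]
Combining, the cut capacity is at least $d|U_2| + d|V_1| + |V_2|\bigl((1-\mu-\eps)m - |U_2|\bigr)$; substituting $|V_1| = m - |V_2|$ and regrouping the terms in $|U_2|$ gives
\[
  dm + |U_2|\bigl(d - |V_2|\bigr) + |V_2|\bigl((1-\mu-\eps)m - d\bigr).
\]
Since $d = (1-\mu-\rho)m \ge (1-\mu-\eps)m - (\rho-\eps)m$, the last term is at least $-(\rho-\eps)m\,|V_2|$; and the middle term is at least $-|U_2||V_2|$ in the worst case, but more usefully I would split into the cases $|V_2| \le d$ and $|V_2| > d$. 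In the first case $d - |V_2| \ge 0$ so the middle term is non-negative, and since $|V_2| \le d \le m$ and $(1-\mu-\eps)m - d = (\rho-\eps)m \ge 0$ the last term is non-negative too, giving capacity $\ge dm$. In the second case $|V_2| > d = (1-\mu-\rho)m$, so $|V_1| = m - |V_2| < (\mu+\rho)m$, and then I instead use $|U_1| > |V_1|$ together with the trivial bound to check $d|U_2| + e_\Gamma(U_1,V_2) + d|V_1| \ge dm$ directly — here $|V_2|$ is large, so $e_\Gamma(U_1,V_2)$ is correspondingly large (each vertex of $V_2$ contributes $\ge (1-\mu-\eps)m - |U_2|$ neighbours in $U_1$, provided $U_1 \ne \emptyset$, and if $U_1 = \emptyset$ then $|V_1| = m$ and we are trivially done).

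The main obstacle is getting the case analysis in the cut estimate clean: the bound $d_\Gamma(v, U_1) \ge \delta(\Gamma) - |U_2|$ is only useful when $|U_2|$ is not too large, and conversely when $|U_2|$ is large the term $d|U_2|$ alone nearly suffices, so one has to balance these two regimes against the slack $(\rho - \eps)m > 0$ that the hypothesis $\eps \ll \rho$ provides. Once the min-cut is shown to be exactly $dm$ (it is at most $dm$ by taking $S = \{s\}$, whose cut is the source arcs of total capacity $dm$), an integral maximum flow yields the desired $d$-regular subgraph $\Gamma'$. Finally, a $(1-\mu-\rho)m$-regular bipartite graph decomposes into $(1-\mu-\rho)m$ perfect matchings by repeated application of König's edge-colouring theorem (or Hall's theorem), giving the ``in particular'' clause.
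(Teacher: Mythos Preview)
Your approach via Max-Flow--Min-Cut and the network construction are identical to the paper's, and your first case ($|V_2|\le d$) is clean and correct. The gap is in your second case ($|V_2|>d$): the bound $e_\Gamma(U_1,V_2)\ge |V_2|\bigl((1-\mu-\eps)m-|U_2|\bigr)$, which uses only the minimum-degree hypothesis, is not strong enough. Concretely, take $\mu=1/4$ and $|U_2|=|V_1|=m/8$ (so $|U_1|=|V_2|=7m/8>d$). Your displayed expression $dm+|U_2|(d-|V_2|)+|V_2|(\rho-\eps)m$ then equals $dm - m^2/64 + 3\rho m^2/4 - 7\eps m^2/8$, which falls short of $dm$ whenever $\rho<1/48$; but the hierarchy only demands $\rho\ll 1$, so $\rho$ may be far smaller than this. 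Your hand-wave (``$e_\Gamma(U_1,V_2)$ is correspondingly large'') does not rescue this, and nowhere in your argument do you invoke the hypothesis $\Delta(\Gamma)\le(1-\mu+\eps)m$.

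That hypothesis is precisely what the paper uses to close this case. After reducing by symmetry to the situation where both $|U_1|>d$ and $|V_2|>d$ (so $|U_2|,|V_1|<(\mu+\rho)m$), the paper bounds
\[
e_\Gamma(U_1,V_2)\ \ge\ \sum_{u\in U_1}d_\Gamma(u)-\sum_{v\in V_1}d_\Gamma(v)\ \ge\ (1-\mu-\eps)m\,|U_1|-(1-\mu+\eps)m\,|V_1|,
\]
where the upper bound on $\sum_{v\in V_1}d_\Gamma(v)$ comes from the maximum-degree assumption. A short computation (using $|U_1|-|V_1|>(1-2\mu-2\rho)m$ and $\eps\ll\rho$) then gives $e_\Gamma(U_1,V_2)\ge d(|U_1|-|V_1|)$ as required. (A small slip: in your final parenthetical, ``if $U_1=\emptyset$ then $|V_1|=m$'' should read $|U_2|=m$.)
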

\proof 
We first obtain a directed network $N$ from $\Gamma$ by adding a source $s$ and a sink $t$.
We add a directed edge $su$ of capacity $(1-\mu-\rho)m$ for each $u \in U$ and a directed edge $vt$ of capacity 
$(1-\mu-\rho)m$ for each $v \in V$. We give all the edges in $\Gamma$ capacity $1$ and direct them from $U$ to $V$.

Our aim is to show that the capacity of any $(s,t)$-cut%
	\COMMENT{Allan: replace cut with $(s,t)$-cut.}
 is at least $(1-\mu-\rho)m^2$. By the MaxFlowMinCut theorem this would imply
that $N$ admits an integer-valued flow of value $(1-\mu-\rho)m^2$ which by construction of $N$ implies the existence of our desired subgraph $\Gamma '$.

Consider any $(s,t)$-cut $(S, \overline{S})$ where $S= \{s\} \cup S_1 \cup S_2$ with $S_1 \subseteq U$ and $S_2 \subseteq V$.
Let $\overline{S}_1:=U\backslash S_1$ and $\overline{S}_2 :=V \backslash S_2$.
The capacity of this cut is
$$(1-\mu-\rho)m(m-|S_1|)+e(S_1, \overline{S}_2)+ (1-\mu-\rho)m|S_2|$$
and therefore our aim is to show that
\begin{align}\label{target}
e(S_1, \overline{S}_2) \geq (1-\mu-\rho)m(|S_1|-|S_2|).
\end{align}
If $|S_1| \le (1-\mu-\rho )m$, then 
\begin{align*}
	e(S_1, \overline{S}_2) &  \ge 
\left( ( 1-\mu-\eps) m - |S_2| \right) |S_1| \\
		& = (1-\mu-\rho) m (|S_1|-|S_2|) + (\rho-\eps ) m |S_1| +|S_2|\left( (1-\mu-\rho)m -|S_1| \right)\\
		& \ge  (1-\mu-\rho) m (|S_1|-|S_2|).
\end{align*}
Thus, we may assume that $|S_1| > (1-\mu-\rho )m$.
Note that $|S_1|-|S_2| = |\overline{S}_2|-|\overline{S}_1|$.
Therefore, by a similar argument, we may also assume that $|\overline{S}_2| > (1-\mu-\rho)m$ and so $|S_2| \le (\mu +\rho) m$.
This implies that
\begin{align*}
	e(S_1, \overline{S}_2) & \ge \sum_{x \in S_1} d_{\Gamma} (x) - \sum_{y \in S_2} d_{\Gamma} (y)
 \ge  (1-\mu-\eps ) m |S_1| - (1-\mu+\eps )m |S_2| \\
	& =(1-\mu-\rho) m (|S_1| - |S_2|) + \rho m (|S_1| -|S_2|) - \eps m( |S_1| +|S_2|)  \\
	& > (1-\mu-\rho)m (|S_1|-|S_2|) + (1- 2\mu-2\rho) \rho m^2 - (1+\mu+\rho) \eps m^2\\
	& \ge  (1-\mu-\rho)m (|S_1|-|S_2|).
\end{align*}
(Note that the last inequality follows as $\eps \ll \rho\ll 1$ and $ \mu \le 1/4$.)
So indeed (\ref{target}) is satisfied, as desired.
\endproof


\subsection{Robust expansion} \label{sec:rob}

Given $0<\nu \leq \tau<1$, we say that a digraph $G$ on $n$ vertices is a \emph{robust $(\nu, \tau)$-outexpander},
if for all $S\subseteq V(G)$ with $\tau n\le |S|\le (1-\tau)n$ the number of vertices that have at least $\nu n$
inneighbours in $S$ is at least $|S|+\nu n$. 
The following result was derived in~\cite{3reg3con} as a straightforward consequence of the
result from~\cite{KOT10} that every robust outexpander of linear minimum degree has a Hamilton cycle.%
    \COMMENT{Daniela: reworded}

\begin{thm}\label{expanderthm}
Suppose that $0 < 1/n\ll \gamma \ll \nu \ll \tau\ll\eta<1$. Let~$G$ be a digraph on~$n$ vertices with
$\delta^+(G), \delta^-(G)\ge \eta n$ which is a robust $(\nu,\tau)$-outexpander. 
Let $y_1, \dots, y_p$ be distinct vertices in $V(G)$ with $p \le \gamma n$.
Then~$G$ contains a directed Hamilton cycle visiting $y_1, \dots, y_p$ in this  order.
\end{thm}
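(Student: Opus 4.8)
The plan is to reduce the statement to the (unordered) result of~\cite{KOT10} that every robust outexpander of linear minimum degree contains a directed Hamilton cycle, by first constructing short vertex-disjoint ``connecting paths'' between consecutive $y_j$'s and then contracting them to a single vertex. We may assume $p\ge 2$, since for $p\le 1$ the prescribed order is vacuous and $G$ itself is a robust $(\nu,\tau)$-outexpander with $\delta^+(G),\delta^-(G)\ge\eta n$, so~\cite{KOT10} applies directly.

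\emph{Step 1 (connecting paths).} First I would show that there is a constant $\ell_0=\ell_0(\nu,\eta)$ such that, processing $j=1,\dots,p$ in turn, one can greedily pick directed paths $P_1,\dots,P_p$ in $G$, where $P_j$ runs from $y_j$ to $y_{j+1}$ (indices taken cyclically), each $P_j$ has at most $\ell_0$ internal vertices, and all internal vertices of all the $P_j$'s are distinct and avoid $\{y_1,\dots,y_p\}$. At stage $j$ at most $p(\ell_0+1)\le(\ell_0+1)\gamma n$ vertices are forbidden; since $\gamma\ll\nu\ll\tau\ll\eta$, deleting this many vertices from $G$ leaves a digraph which is still a robust $(\nu/2,2\tau)$-outexpander with minimum in- and out-degree at least $\eta n/2$ (a standard and easy fact about robust outexpanders). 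In such a digraph a short $y_j\to y_{j+1}$ path is found by the usual neighbourhood-growth argument: starting from $N^+(y_j)$ and repeatedly applying robust outexpansion enlarges the set of vertices reachable from $y_j$ by at least $\nu n/2$ per step until it has size $>(1-\eta/2)n$, at which point it meets $N^-(y_{j+1})$; extracting a simple subpath and appending the last edge gives $P_j$ of length at most $\ell_0$. Writing $W:=\bigcup_j\big(V(P_j)\setminus\{y_j,y_{j+1}\}\big)$ and $D:=P_1+\dots+P_p$, the digraph $D$ is a single directed cycle through $\{y_1,\dots,y_p\}\cup W$ on $q\le(\ell_0+1)p\le(\ell_0+1)\gamma n$ vertices.

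\emph{Step 2 (contraction and Hamiltonicity).} Fix any edge $ab$ of $D$ and let $P^*:=D-ab$, a directed Hamilton path of $D$ from $b$ to $a$; note that $P^*$ visits $y_1,\dots,y_p$ in the required cyclic order. Form $G''$ from $G$ by deleting $V(P^*)$ and adding a new vertex $w^*$ with an edge $uw^*$ whenever $ub\in E(G)$ and an edge $w^*v$ whenever $av\in E(G)$ (discarding any resulting multiple edges). Then $n'':=|G''|=n-q+1\ge(1-(\ell_0+1)\gamma)n$, and $G''$ is a robust $(\nu/2,2\tau)$-outexpander with $\delta^+(G''),\delta^-(G'')\ge\eta n/3 \cdot \tfrac{n''}{n''}$, i.e.\ linear minimum degree: every $v\ne w^*$ loses at most $q$ out- and in-neighbours, $w^*$ inherits the out-degree of $a$ and the in-degree of $b$ up to an additive $q$, and for $S\subseteq V(G'')$ in the relevant size range its preimage $\tilde S\subseteq V(G)$ (obtained by replacing $w^*$ with $V(P^*)$ when $w^*\in S$) satisfies $\tau n\le|\tilde S|\le(1-\tau)n$, so robust outexpansion of $\tilde S$ in $G$, after discarding the at most $q$ vertices of $V(P^*)$, yields robust outexpansion of $S$ in $G''$. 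Hence by~\cite{KOT10}, applied with the hierarchy $1/n''\ll\nu/2\ll2\tau\ll\eta/3<1$, the digraph $G''$ has a directed Hamilton cycle $C''$. Finally I ``unfold'' $C''$: it passes through $w^*$ exactly once, say as $\dots\to u\to w^*\to v\to\dots$ with $ub,av\in E(G)$; replacing that occurrence of $w^*$ by the path $P^*$ (traversed from $b$ to $a$) turns $C''$ into a directed Hamilton cycle of $G$ containing $P^*$, which therefore visits $y_1,\dots,y_p$ in this order.

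\emph{Main obstacle.} There is no genuine difficulty here — the argument is a routine reduction. The only point requiring care is the parameter bookkeeping: checking that robust $(\nu,\tau)$-outexpansion together with linear minimum in- and out-degree survives (with slightly weaker constants) both the deletion of the $O(\gamma n)$ forbidden vertices in Step 1 and the path-contraction of Step 2, so that~\cite{KOT10} is legitimately applicable to $G''$. Everything else is a greedy construction plus the cited black-box Hamiltonicity result.
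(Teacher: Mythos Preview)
The paper does not give its own proof of this theorem; it simply quotes it, saying the result ``was derived in~\cite{3reg3con} as a straightforward consequence of the result from~\cite{KOT10}''. So there is nothing in the paper to compare against beyond the remark that the derivation from~\cite{KOT10} is straightforward.

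Your proposal is exactly such a straightforward derivation, and it is correct. The two steps --- greedily linking the $y_j$ by short paths using iterated robust outexpansion, then contracting the resulting path to a single vertex and invoking~\cite{KOT10} --- are the standard way to upgrade ``has a Hamilton cycle'' to ``has a Hamilton cycle through prescribed vertices in a prescribed order'' for robust outexpanders, and this is presumably what is done in~\cite{3reg3con}. A couple of cosmetic points: the bound $\ell_0$ on the path lengths depends on $\nu$ (roughly $\ell_0\sim 2/\nu$), so the total number of forbidden vertices is $O(\gamma n/\nu)$; this is still negligible because the hierarchy $\gamma\ll\nu$ lets you take $\gamma\le\nu^3$, say. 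Also, the expression ``$\eta n/3\cdot\tfrac{n''}{n''}$'' is garbled; what you need (and have) is $\delta^\pm(G'')\ge\eta n - q\ge(\eta/2)n''$. With these tidied up, the argument goes through.
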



\subsection{A regularity concept for sparse graphs} \label{sec:reg}
We now formulate a concept of $\eps$-superregularity which is suitable for `sparse' graphs.
Let $G$ be a bipartite graph with vertex classes $U$ and $V$, both of size $m$.
Given $A\subseteq U$ and $B\subseteq V$, we write $d(A,B):=e(A,B)/|A||B|$ for the density of $G$
between $A$ and~$B$. Given $0<\eps,d,d^*,c<1$, we say that $G$ is \emph{$(\eps,d,d^*,c)$-superregular} if the following conditions are satisfied:
\begin{itemize}
\item[(Reg1)] Whenever $A\subseteq U$ and $B\subseteq V$ are sets of size at least $\eps m$, then
$d(A,B)=(1\pm \eps)d$.
\item[(Reg2)] For all $u,u'\in V(G)$ we have $|N(u)\cap N(u')|\le c^2m$.%
	\COMMENT{Previously, we had the following statement, which is the same.
For all $u,u'\in U$ we have $|N(u)\cap N(u')|\le c^2m$. Similarly, for all $v,v'\in V$ we have $|N(v)\cap N(v')|\le c^2m$.}
\item[(Reg3)] $\Delta(G)\le cm$.
\item[(Reg4)] $\delta(G)\ge d^*m$.
\end{itemize}
Note that the above definitions also make sense if $G$ is `sparse' in the sense that $d<\eps$ (which will be the case in our proofs).
A bipartite digraph $G=G[U,V]$ is \emph{$(\eps,d,d^*,c)$-superregular} if this holds for
the underlying undirected graph of~$G$.

The following observation follows immediately from the definition.

\begin{prop} \label{superslice6}
Suppose that $0<1/m \ll  d^*,d, \eps, \eps', c  \ll 1$ and $2\eps'\le d^*$.
Let $G$ be an $(\eps,d,d^*,c)$-superregular bipartite graph with vertex classes
$U$ and $V$ of size $m$. 
Let $U' \subseteq U$ and $V' \subseteq V$ with $|U'|=|V'| \ge (1- \eps')m$.
Then $G[U',V']$ is $(2\eps,d, d^*/2 ,2c)$-superregular.%
   \COMMENT{Let $m':= |U'| = |V'|$, so $(1- \eps')m \le m' \le m$.
Since $ 2\eps m'\ge \frac{\eps m'}{1- \eps'} \ge \eps m$, $G[U',V']$ satisfies (Reg1)--(Reg3).
To see (Reg4) note that the degrees in $G'$ are still at least $d^*m-\eps' m \ge d^* m'/2$.}
\end{prop}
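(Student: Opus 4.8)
The statement to prove is Proposition~\ref{superslice6}: if $G$ is $(\eps,d,d^*,c)$-superregular with classes $U,V$ of size $m$, and $U'\subseteq U$, $V'\subseteq V$ with $|U'|=|V'|\ge(1-\eps')m$, then $G[U',V']$ is $(2\eps,d,d^*/2,2c)$-superregular.

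\medskip

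\textbf{Proof proposal.} The plan is simply to verify the four conditions (Reg1)--(Reg4) for $G':=G[U',V']$ directly, using the corresponding conditions for $G$. Write $m':=|U'|=|V'|$, so $(1-\eps')m\le m'\le m$. The only genuine point to be careful about is that the ``test set'' threshold changes from $\eps m$ to $2\eps m'$, and that removing vertices can only decrease degrees, so (Reg4) needs the extra slack in $d^*$.

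First I would check (Reg1). Let $A\subseteq U'$ and $B\subseteq V'$ with $|A|,|B|\ge 2\eps m'$. Since $m'\ge(1-\eps')m\ge m/2$ (as $\eps'\ll 1$), we get $2\eps m'\ge \eps m$ — more precisely $2\eps m' \ge \eps m'/(1-\eps') \ge \eps m$ — so $A$ and $B$ are also admissible test sets for $G$, and hence $d_{G'}(A,B)=d_G(A,B)=(1\pm\eps)d=(1\pm 2\eps)d$. This gives (Reg1) for $G'$ with parameter $2\eps$. Next, (Reg2) and (Reg3) are immediate monotonicity statements: for $u,u'\in V(G')$ we have $|N_{G'}(u)\cap N_{G'}(u')|\le |N_G(u)\cap N_G(u')|\le c^2 m\le (2c)^2 m'$ (using $m\le 2m'$, or just $m'\le m$ gives even more room), and $\Delta(G')\le\Delta(G)\le cm\le 2cm'$ similarly. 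So (Reg2) holds with parameter $2c$ and (Reg3) holds with parameter $2c$.

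Finally, (Reg4): each vertex of $G'$ loses at most $\eps' m$ neighbours when passing from $G$ to $G'$ (we delete at most $\eps' m$ vertices from each side), so $\delta(G')\ge \delta(G)-\eps' m\ge d^*m-\eps' m$. Since $2\eps'\le d^*$ we have $\eps' m\le d^*m/2$, hence $\delta(G')\ge d^*m/2\ge d^*m'/2$, which is exactly (Reg4) with parameter $d^*/2$. Combining the four verifications shows $G'$ is $(2\eps,d,d^*/2,2c)$-superregular, as required. There is no real obstacle here; the ``hardest'' part is just keeping track that $2\eps m'\ge \eps m$ so that (Reg1) transfers, and that the $d^*$-slack absorbs the deleted vertices in (Reg4).
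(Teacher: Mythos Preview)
Your proof is correct and matches the paper's approach exactly: the paper treats this as an immediate observation, with the hidden comment verifying (Reg1)--(Reg3) via $2\eps m'\ge \eps m'/(1-\eps')\ge \eps m$ and (Reg4) via $\delta(G')\ge d^*m-\eps' m\ge d^*m'/2$, precisely as you do.
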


The following two simple observations were made in~\cite{Kelly}.
\begin{prop} \label{superslice5}
Suppose that $0<1/m \ll d^*,d, \eps, c\ll 1$. Let $G$ be an $(\eps,d,d^*,c)$-superregular bipartite graph with vertex classes
$U$ and $V$ of size $m$. Suppose that $G'$ is obtained from $G$ by removing
at most $\eps^2 dm$ edges incident to each vertex from $G$.
Then $G'$ is $(2\eps,d,d^*-\eps^2 d,c)$-superregular.
\end{prop}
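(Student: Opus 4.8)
\textbf{Proof proposal for Proposition~\ref{superslice5}.}

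The plan is to verify the four superregularity conditions (Reg1)--(Reg4) for $G'$ directly, using the fact that each vertex loses at most $\eps^2 dm$ edges. Write $G'$ for the graph obtained from $G$ by deleting this small set of edges. Throughout, recall that $d$ may be small (sparse case), so the error terms must be handled multiplicatively against $dm$, not against $m$.

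For (Reg1): fix $A\subseteq U$ and $B\subseteq V$ with $|A|,|B|\ge 2\eps m\ge \eps m$. By (Reg1) for $G$ we have $e_G(A,B)=(1\pm\eps)d|A||B|$. We removed at most $\eps^2 dm$ edges at each vertex of $A$, hence at most $\eps^2 dm|A|$ edges in total from the pair $(A,B)$; since $|B|\ge 2\eps m$ we have $\eps^2 dm|A|\le \eps^2 dm|A|\cdot\frac{|B|}{2\eps m}=\tfrac{\eps}{2}d|A||B|$. Therefore $e_{G'}(A,B)=(1\pm\eps)d|A||B|\pm\tfrac{\eps}{2}d|A||B|=(1\pm 2\eps)d|A||B|$, so $d_{G'}(A,B)=(1\pm 2\eps)d$, which is (Reg1) with parameter $2\eps$. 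For (Reg2): common neighbourhoods can only shrink when edges are deleted, so $|N_{G'}(u)\cap N_{G'}(u')|\le |N_G(u)\cap N_G(u')|\le c^2m$ for all $u,u'$; this is exactly (Reg2) for $G'$ with the same $c$ (and hence also with parameter $2c$, should that be wanted, though we keep $c$). For (Reg3): degrees only decrease, so $\Delta(G')\le\Delta(G)\le cm$. For (Reg4): each vertex loses at most $\eps^2 dm$ neighbours, so $\delta(G')\ge\delta(G)-\eps^2 dm\ge d^*m-\eps^2 dm=(d^*-\eps^2 d)m$, which is (Reg4) with parameter $d^*-\eps^2 d$. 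Combining the four checks, $G'$ is $(2\eps,d,d^*-\eps^2 d,c)$-superregular, as claimed.

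There is no real obstacle here; the only point requiring a moment's care is that in (Reg1) the deletion error must be compared to $d|A||B|$ rather than to $|A||B|$, which is why the hypothesis bounds the number of deleted edges by $\eps^2 dm$ (proportional to $d$) and why the $|B|\ge 2\eps m$ lower bound is used to absorb the loss into the relative error. Everything else is monotonicity of degrees and common neighbourhoods under edge deletion. $\square$
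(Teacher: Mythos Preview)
Your proof is correct and is the natural direct verification of (Reg1)--(Reg4); the paper itself does not give a proof but merely cites the result as a simple observation from~\cite{Kelly}. Your handling of (Reg1) is the only place where any care is needed, and you treat it correctly by comparing the deletion error $\eps^2 dm|A|$ to $d|A||B|$ via the lower bound $|B|\ge 2\eps m$.
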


\begin{lemma}\label{regtoexpander}
Let $0<1/m\ll \nu\ll \tau\ll d\le \eps\ll \mu,\zeta \le 1/2$ and let $G$ be an
$(\eps,d,\zeta d,d/\mu)$-superregular bipartite graph with vertex classes $U$ and $V$ of size $m$. Let $A\subseteq U$ be such that
$\tau m\le |A|\le (1-\tau) m$. Let $B\subseteq V$ be the set of all those vertices in $V$ which have at least $\nu m$ neighbours in $A$.
Then $|B|\ge |A|+\nu m$.
\end{lemma}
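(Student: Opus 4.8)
The plan is to split the argument according to the size of $A$. Set $\lambda := \zeta^2\mu^2/16$; the hierarchy gives $\eps \ll \lambda$ and $\nu \ll \lambda$. Write $\bar A := U \setminus A$ and $W := V \setminus B$, so that $|A|, |\bar A| \ge \tau m$ and every $v \in W$ satisfies $d_G(v,A) < \nu m$ and hence, by (Reg4) together with $\nu \ll \zeta d$, satisfies $d_G(v, \bar A) = d_G(v) - d_G(v,A) \ge \zeta dm/2$. I would treat three ranges: (I) $\tau m \le |A| \le \lambda m$; (II) $\lambda m < |A| < (1-\lambda)m$; (III) $(1-\lambda)m \le |A| \le (1-\tau)m$.

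For range (I), I would use a Cauchy--Schwarz / common-neighbourhood count. By (Reg4), $e_G(A,V) \ge \zeta dm|A|$; since $e_G(A,W) < \nu m|W| \le \nu m^2 \le \zeta dm|A|/2$ (using $|A| \ge \tau m$ and $\nu \ll \tau\zeta d$), it follows that $e_G(A,B) \ge \zeta dm|A|/2$. Counting paths $uvu'$ with $u,u' \in A$ distinct and $v \in B$, and applying (Reg2) and (Reg3),
\[ \sum_{v \in B} d_G(v,A)^2 \;\le\; e_G(A,B) + |A|^2 (d/\mu)^2 m \;\le\; 2|A|^2 (d/\mu)^2 m, \]
where the second step uses $|A| d/\mu \ge 1$. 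Then Cauchy--Schwarz yields $|B| \ge e_G(A,B)^2 / \sum_{v\in B} d_G(v,A)^2 \ge \zeta^2\mu^2 m / 8 = 2\lambda m$, and since $|A| \le \lambda m$ and $\lambda m \ge \nu m$ this gives $|B| \ge |A| + \nu m$.

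For range (II) I would use (Reg1). Here $|A| \ge \lambda m \ge \eps m$. If $|W| \ge \eps m$, then (Reg1) applied to $A$ and $W$ gives $e_G(A,W) \ge (1-\eps)d|A||W| \ge d\lambda m|W|/2$, contradicting $e_G(A,W) < \nu m |W|$ since $\nu \ll d\lambda$. Hence $|W| < \eps m$, so $|B| > (1-\eps)m \ge |A| + \nu m$, the last inequality because $|A| \le (1-\lambda)m$ and $\lambda \ge \eps + \nu$. For range (III) I would run the argument of range (I) with the roles of $A$ and $\bar A$ interchanged: since every $v \in W$ has $d_G(v,\bar A) \ge \zeta dm/2$, combining $|W|(\zeta dm/2)^2 \le \sum_{v \in W} d_G(v,\bar A)^2$ with the bound $\sum_{v\in W} d_G(v,\bar A)^2 \le e_G(\bar A, W) + |\bar A|^2(d/\mu)^2 m \le 2|\bar A|^2(d/\mu)^2 m$ (from (Reg2), (Reg3) and $|\bar A| d/\mu \ge 1$) gives $|W| \le 8|\bar A|^2/(\mu^2\zeta^2 m) \le |\bar A|/2$, using $|\bar A| \le \lambda m = \zeta^2\mu^2 m/16$. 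Hence $|B| = m - |W| \ge |A| + |\bar A|/2 \ge |A| + \tau m/2 \ge |A| + \nu m$.

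The reason for the case split---and the main point to get right---is that (Reg1) is only informative for sets of size at least $\eps m$, whereas $A$ and $U\setminus A$ may be as small as $\tau m \ll \eps m$; the two Cauchy--Schwarz arguments using (Reg2)--(Reg4) are precisely what is needed in the ranges where $A$ (respectively $U\setminus A$) is too small for (Reg1) to apply. The remaining work is routine: verifying that the constant $\lambda$ and the various absorptions of $\eps$ and $\nu$ into it are consistent with the stated hierarchy, and checking the elementary inequalities above.
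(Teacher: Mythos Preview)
Your proof is correct. The paper does not give its own proof of this lemma; it merely quotes the statement as an observation from~\cite{Kelly}, so there is nothing to compare against directly. Your three-range split is exactly the right idea: (Reg1) only controls pairs of sets of size at least $\eps m$, and since $\tau\ll d\le\eps$ the set $A$ (or its complement $\bar A$) may be too small for (Reg1) to apply, which is why the codegree bound (Reg2) together with (Reg4) and Cauchy--Schwarz is needed at the two ends. All the numerical absorptions you perform (e.g.\ $\nu\ll\tau\zeta d$, $\nu\ll d\lambda$, $\eps+\nu<\lambda$, $|A|d/\mu\ge 1$) are legitimate consequences of the hierarchy $1/m\ll\nu\ll\tau\ll d\le\eps\ll\mu,\zeta$, since the constants are chosen from right to left and $m$ may be taken large relative to all of them.
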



\section{Systems and Balanced extensions} \label{systembalanced}
\subsection{Sketch proof of Lemma~\ref{almostthm}} \label{sec:sketch}
Roughly%
   \COMMENT{Deryk: new para}
speaking, the Hamilton cycles we find will have the following structure:
let $A_1,\dots,A_K\subseteq A$ and $B_1,\dots,B_K\subseteq B$ be the clusters of the $(K,m, \epszero)$-partition $\mathcal{P}$
of $V(G)$ given in Lemma~\ref{almostthm}. So $K$ is odd.
Let $\mathcal{R}_A$ be the complete graph on $A_1,\dots,A_K$ and $\mathcal{R}_B$ be the complete graph on $B_1,\dots,B_K$.
Since $K$ is odd, Walecki's theorem~\cite{lucas} implies that $\mathcal{R}_A$ has a Hamilton decomposition $C_{A,1}, \dots, C_{A,(K-1)/2}$, and similarly  $\mathcal{R}_B$
has a Hamilton decomposition $C_{B,1}, \dots, C_{B,(K-1)/2}$.%
\COMMENT{Andy: added last part of sentence.}
Every Hamilton cycle $C$ we construct in $G$ will have the property that there is a $j$ so that almost all edges of $C[A]$
wind around $C_{A,j}$ and almost all edges of $C[B]$ wind around $C_{B,j}$.
Below, we describe the main ideas involved in the construction of the Hamilton cycles in more detail.

As indicated above, the first idea is that we can reduce the problem of finding the required edge-disjoint Hamilton cycles (and possibly perfect matchings) in~$G$
to that of finding appropriate Hamilton cycles on each of $A$ and $B$ separately.
We achieve this by introducing suitable path systems $J_A^*$ and $J_B^*$.

More precisely, let $\mathcal{J}$ be a set of edge-disjoint exceptional systems as given in Lemma~\ref{almostthm}.%
    \COMMENT{Daniela: reworded}
By deleting some edges if necessary, we may further assume that $\mathcal{J}$ is an edge-decomposition of $G - G[A] - G[B]$.
Thus, in order to prove Lemma~\ref{almostthm}, we have to find $|\mathcal{J}|$ suitable edge-disjoint subgraphs $H_{A,1}, \dots, H_{A,|\mathcal{J}|}$
of $G[A]$ and $|\mathcal{J}|$ suitable edge-disjoint subgraphs $H_{B,1}, \dots, H_{B,|\mathcal{J}|}$ of $G[B]$ such that
$H_s := H_{A,s} + H_{B,s}  + J_s$ are the desired spanning subgraphs of~$G$.
To prove this, for each $J \in \mathcal{J}$, we define two auxiliary subgraphs $J^*_A$ and $J^*_B$ with the following crucial properties: 
\begin{itemize}
\item[($\alpha_1$)] $J^*_A$ and $J^*_B$ are matchings whose vertices are contained in $A$ and $B,$\COMMENT{B\'ela: added comma} respectively;
\item[($\alpha_2$)] the union of any Hamilton cycle $C^*_A$ in $G[A] + J^*_A$ containing $J^*_A$ (in some suitable order)
and any Hamilton cycle $C^*_B$ in $G[B] + J^*_B$ containing $J^*_B$ (in some suitable order) corresponds to either a Hamilton cycle
of $G$ containing $J$ or to the union of two edge-disjoint perfect matchings of $G$ containing $J$.
\end{itemize}
Furthermore, $J$ determines which of the cases in~($\alpha_2$) holds: If $J$ is a Hamilton exceptional system,
then ($\alpha_2$) will give a Hamilton cycle of $G$, while in the case when $J$ is a matching exceptional system, ($\alpha_2$)
will give the union of two edge-disjoint perfect matchings of $G$.
So roughly speaking, this allows us to work with multigraphs $G^*_A := G[A] + \sum_{J \in \mathcal{J}} J^*_A$ and
$G^*_B:= G[B] + \sum_{J \in \mathcal{J} } J^*_B$ rather than $G$ in the two steps.%
	\COMMENT{Allan: added the word multigraphs}
Furthermore, the processes of finding Hamilton cycles in $G^*_A$ and in $G^*_B$ are independent (see Section~\ref{sec:J*2clique} for more details).%
   \COMMENT{Deryk: added brackets} 

By symmetry, it suffices to consider $G^*_A$ in what follows.
The second idea of the proof is that as an intermediate step, we decompose $G_A^*$ into blown-up Hamilton cycles
$G^*_{A,j}$. Roughly speaking, we will then find an approximate Hamilton decomposition of each $G^*_{A,j}$ separately.

More precisely,%
    \COMMENT{Andy: reworded} 
recall that $\mathcal{R}_A$ denotes the complete graph whose vertex set is $\{A_1, \dots, A_K \}$.
As mentioned above, $\mathcal{R}_A$ has a Hamilton decomposition $C_{A,1}, \dots, C_{A,(K-1)/2}$.%
    \COMMENT{Daniela: changed $C_j$ to $C_{A,j}$ here and below}
We decompose $G[A]$ into edge-disjoint subgraphs $G_{A,1}, \dots,G_{A,{(K-1)/2}}$ such that each $G_{A,j}$ corresponds to the `blow-up' of $C_{A,j}$,
i.e.~$G_{A,j}[U,W] = G[U,W]$ for every edge $UW \in E(C_{A,j})$. (The edges of $G$ lying inside one of the clusters $A_1, \dots, A_K$
are deleted.) We also partition the set $\{J^*_A: J\in\mathcal{J}\}$ into $(K-1)/2$ sets
$\mathcal{J}^*_{A,1}, \dots, \mathcal{J}^*_{A,(K-1)/2}$ of roughly equal size. Set $G^*_{A,j} := G_{A,j} + \mathcal{J}^*_{A,j}$.
Thus in order to prove Lemma~\ref{almostthm}, we need to find $|\mathcal{J}^*_{A,j}|$ edge-disjoint Hamilton cycles in $G^*_{A,j}$
(for each $j\le (K-1)/2$).
Since $G^*_{A,j}$ is still close to being a blow-up of the cycle $C_{A,j}$, finding such Hamilton cycles 
seems feasible.

One complication is that in order to satisfy~($\alpha_2$), we need to ensure that each Hamilton cycle in $G^*_{A,j}$
contains some $J^*_A\in \mathcal{J}^*_{A,j}$ (and it must traverse the edges of $J^*_A$ in some given order).
To achieve this, we will both orient and order the edges of~$J^*_A$.
So we will actually consider an ordered directed matching $J^*_{A, {\rm dir}}$ instead of $J^*_A$. ($J^*_A$ itself will still be undirected and unordered).
We orient the edges of $G_{A,j}$ such that the resulting oriented graph $G_{A,j,{\rm dir}}$ is a blow-up of the directed cycle $C_{A,j}$.

However, $J^*_{A,{\rm dir}}$ may not be `locally balanced with respect to $C_{A,j}$'.
This means that it is impossible to extend $J^*_{A,{\rm dir}}$ into a directed Hamilton cycle using only edges of $G_{A,j,{\rm dir}}$.
For example, suppose that $G_{A,j,{\rm dir}}$ is a blow-up of the directed cycle $A_1A_2 \dots A_{K}$, i.e.~each edge of $G_{A,j,{\rm dir}}$
joins $A_i$ to $A_{i+1}$ for some $1\le i\le K$.
If $J^*_{A,{\rm dir}}$ is non-empty and $V(J^*_{A,{\rm dir}})\subseteq A_1$, then $J^*_{A,{\rm dir}}$
cannot be extended into a directed Hamilton cycle using edges of $G_{A,j,{\rm dir}}$ only.
Therefore, each $J^*_{A,{\rm dir}} $ will first be extended into a `locally balanced path sequence' $PS$.
$PS$ will have the property that it can be extended to a Hamilton cycle
using only edges of $G_{A,j,{\rm dir}}$. We will call the set $\mathcal{BE}_j$ consisting of all such $PS$ for all $J^*_A\in \mathcal{J}^*_{A,j}$
a balanced extension of $\mathcal{J}^*_{A,j}$.%
   \COMMENT{Deryk: changed $\mathcal{J}^*_{j}$ to $\mathcal{J}^*_{A,j}$ twice}
$\mathcal{BE}_j$ will be constructed in
Section~\ref{sec:BE2clique}, using edges from a sparse graph $H'$ on $A$ (which is actually removed from $G[A]$ before defining $G_{A,1}, \dots, G_{A,(K-1)/2}$).

Finally, we find the required directed Hamilton cycles in $G_{A,j,{\rm dir}}+ \mathcal{BE}_j$ in Section~\ref{sec:extendmerge}.
We construct these by first extending the path sequences in $\mathcal{BE}_j$ into (directed) $1$-factors, using edges which wind around the blow-up of $C_{A,j}$.
These are then transformed into Hamilton cycles using a small set of edges set aside earlier (again the set of these edges winds around the blow-up of $C_{A,j}$).

\subsection{Systems and balanced extensions} \label{system}
As mentioned above, the proof of Lemma~\ref{almostthm} requires an edge-decomposition and orientation of $G[A]$ and
$G[B]$ into blow-ups of directed cycles as well as `balanced extensions'. These are defined in the current subsection.

Let $k,m \in \mathbb{N}$.
A \emph{$(k,m)$-equipartition} $\mathcal{Q}$ of a set $V$ of vertices is a partition of $V$ into sets $V_1, \dots, V_k$ such that $|V_i| = m$ for all $i \le k$.
The $V_i$ are called \emph{clusters} of~$\mathcal{Q}$.
$(G, \mathcal Q, C)$ is a \emph{$(k,m, \mu, \eps )$-cyclic system}
 if the following properties hold:
\begin{enumerate}[label={(Sys{\arabic*})}]
	\item $G$ is a digraph and $\mathcal{Q}$ is a $(k,m)$-equipartition of $V(G)$.
	\item $C$ is a directed Hamilton cycle on $\mathcal{Q}$ and $G$ winds around $C$.%
    \COMMENT{Added the "winding around bit" since we need it in the proof of Lemma~\ref{merging}}
Moreover, for every edge $UW$ of $C$,
we have $d^+_G(u,W) = (1- \mu \pm \eps)m $ for every $u \in U$ and $d^-_G(w,U) = (1- \mu \pm \eps)m$ for every $w \in W$.
\end{enumerate}
So roughly speaking, such a cyclic system is a blown-up Hamilton cycle.

Let $\mathcal{Q}$ be a \emph{$(k,m)$-equipartition} of $V$ and let $C$ be a directed Hamilton cycle on~$\mathcal{Q}$.
We say that a digraph $H$ with $V(H)\subseteq V$ is \emph{locally balanced with respect to $C$} if for every edge $UW$
of $C$, the number of edges of $H$ with initial vertex in $U$ equals the number of edges of $H$ with final vertex in~$W$.

Let $M$ be a directed matching.
We say a path sequence $PS$ is a \emph{$V_i$-extension of $M$ with respect to $\mathcal{Q}$} if each edge of $M$ is
contained in a distinct directed path in $PS$ having its final vertex in~$V_i$.%
	\COMMENT{We write $V_i$-extension instead of $i$-extension, because in the bipartite case, we take $\mathcal{Q} = \{A_1, \dots, A_K, B_1, \dots, B_K\}$.}
Let $\mathcal{M} := \{ M_{1}, \dots, M_{q}\}$ be a set of directed matchings. 
A set $\mathcal{BE}$ of path sequences is a \emph{balanced extension of $\mathcal{M}$ with respect to $(\mathcal{Q},C)$
and parameters $(\eps, \ell)$} if $\mathcal{BE}$ satisfies the following properties:
\begin{itemize}
	\item[(BE1)] $\mathcal{BE}$ consists of $q$ path sequences $PS_1, \dots, PS_q$ such that $V(PS_i)\subseteq V$ for
each $i\le q$, each $PS_i$ is locally balanced with respect to $C$ and $PS_1-M_1,\dots,PS_q-M_q$ are edge-disjoint from each other.%
   \COMMENT{Daniela: now have that $PS_1-M_1,\dots,PS_q-M_q$ are edge-disjoint instead of the $PS_s$ themselves}
	\item[(BE2)] Each $PS_s$ is a $V_{i_s}$-extension of $M_s$ with respect to $\mathcal{Q}$ for some $i_s \le k$.
Moreover, for each $i \le k$ there are at most $\ell m/k$  indices $s\le q$ such that $i_s=i$.%
    \COMMENT{will have $k=2K$ in the bip case}
	\item[(BE3)] $|V(PS_s) \cap V_i |\le \eps m$ for all $i \le k $ and $s \le q$. Moreover, for each $i \le k$, there are at most $\ell m/k$ path sequences $PS_s \in \mathcal{BE}$ such that $V(PS_s) \cap V_i \ne \emptyset$.
\end{itemize}

Note that the `moreover part' of (BE3) implies the `moreover part' of (BE2).%
     \COMMENT{Deryk: new sentence}

Given an ordered directed matching $M = \{f_1, \dots, f_{\ell}\}$, we say that a directed cycle $C'$ is
\emph{consistent with $M$} if $C'$ contains $M$ and visits the edges $f_1, \dots ,f_{\ell}$ in this order.
The following observation will be useful:
suppose that $PS$ is a $V_i$-extension of $M$ and let $x_j$ be the final vertex of the path in $PS$ containing $f_j$.
(So $x_1, \dots, x_{\ell}$ are distinct vertices of $V_i$.)
Suppose also that $C'$ is a directed cycle which contains $PS$  and visits $x_1, \dots, x_{\ell}$ in this order.
Then $C'$ is consistent with $M$.

\section{Finding systems and balanced extensions for the two cliques cases}
Let $G$ be a graph, let $\mathcal{P}$ be a $(K,m, \eps_0)$-partition of $V(G)$ and
let $\mathcal{J}$ be a set of exceptional systems as given by Lemma~\ref{almostthm}.
The aim of this section is to decompose $G[A]+G[B]$ into $(k, m, \mu , \eps )$-cyclic systems and to construct balanced extensions
as described in Section~\ref{sec:sketch}. First we need to define $J^*_{A, {\rm dir}}$ and $J^*_{B, {\rm dir}}$ for each exceptional system~$J\in \mathcal{J}$.
Recall from Section~\ref{sec:sketch} that these are introduced in order to be able to consider $G[A]$ and $G[B]$ separately
(and thus to be able to ignore the exceptional vertices in $V_0=A_0 \cup B_0$).

\subsection{Defining the graphs $J^*_{A,{\rm dir}}$ and $J^*_{B, {\rm dir}}$} \label{sec:J*2clique}

Let $A,A_0,B,B_0$ be a partition of a vertex set $V$ on $n$ vertices and let $J$ be an exceptional system with parameter $\eps_0$.
Since each maximal path in $J$ has endpoints in $A \cup B$ and internal vertices in $V_0$, an exceptional system $J$
naturally induces a matching $J^*_{AB}$ on $A \cup B$.
More precisely, if $P_1, \dots ,P_{\ell'}$ are the non-trivial paths in~$J$ and $x_i, y_i$ are the endpoints of $P_i$, then
we define $J^*_{AB} := \{x_iy_i : i  \le \ell'\}$. Thus $e_{J^*_{AB}}(A,B)$ is equal to the number of $AB$-paths in~$J$.
In particular, $e_{J^*_{AB}}(A,B)$ is positive and even if $J$ is a Hamilton exceptional system, while $e_{J^*_{AB}}(A,B)=0$ if $J$
is a matching exceptional system. Without loss of generality we may assume that $x_1y_1, \dots, x_{2\ell}y_{2\ell}$ is
an enumeration of the edges of $J^*_{AB}[A,B]$, where $x_i \in A$ and $y_i \in B$. Define 
$$
J_A^* := J^*_{AB}[A] \cup \{x_{2i-1} x_{2i} :1 \le i \le \ell \} \ \
 \mbox{and} \ \ \ J_B^* := J^*_{AB}[B] \cup \{y_{2i} y_{2i+1} :1 \le i \le \ell \}
$$ 
(with indices considered modulo $2\ell$).
Let $J^* := J_A^* + J_B^*$. So $J^*$ is a matching and $e(J^*) = e(J^*_{AB})$. Moreover, by (EC2), (EC3) and (ES3) we have
\begin{align}
	e(J^*) = e(J^*_{AB}) \le |V_0| + \sqrt{\eps_0} n. \label{ESeq}
\end{align}
We will call the edges in $J^*$ \emph{fictive} edges. Note that if $J_1$ and $J_2$ are two edge-disjoint exceptional systems, then $J^*_1$
and $J^*_2$ may not be edge-disjoint.%
   \COMMENT{Daniela: deleted the following, since it is not needed anymore: However, we will always view fictive edges as being distinct from each other and from the edges
in other graphs. So in particular, whenever $J_1$ and $J_2$ are two exceptional systems, we will view $J^*_1$
and $J^*_2$ as being edge-disjoint.}

We say that an (undirected) cycle $C$ is \emph{consistent with $J_A^*$} if $C$ contains $J_A^*$ and (there is an orientation of $C$ which)
visits the vertices $x_1, \dots ,x_{2\ell}$ in this order.%
    \COMMENT{We need to prescribe a vertex rather than an edge ordering as it is important in Proposition~\ref{prop:ES}.}
In a similar way we define when a cycle is consistent with $J_B^*$.
The following proposition is proved in~\cite{paper1}. It is illustrated in Figure~\ref{fig2}.

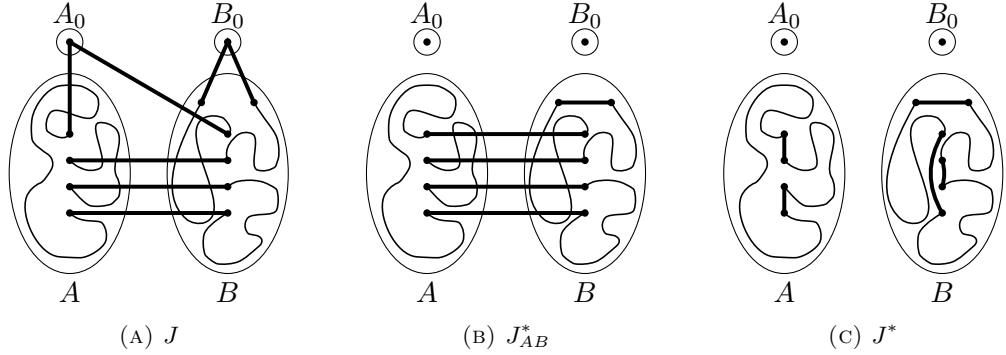
\begin{figure}[tbp]
\centering
\subfloat[$J$]{
\begin{tikzpicture}[scale=0.35]
			\draw  (-3,0) ellipse (2.3  and 3.8 );
			\draw (3,0) ellipse (2.3  and 3.8 );
			\node at (-3,-4.5) {$A$};
			\node at (3,-4.5) {$B$}; 			
			\draw (-3,5) circle(0.5);
			\draw (3,5) circle (0.5);
			\node at (-3,6) {$A_0$};
			\node at (3,6) {$B_0$};
			\fill (-3,5) circle (4pt);
			\fill (3,5) circle (4pt);
			\begin{scope}[start chain]
			\foreach \i in {1.5,0.5,...,-1.5}
			\fill (-3,\i) circle (4pt);
			\end{scope}
			\begin{scope}[start chain]
			\foreach \i in {1.5,0.5,...,-1.5}
			\fill (3,\i) circle (4pt);
			\end{scope}
			\fill (2,2.7) circle (4pt);
			\fill (4,2.7) circle (4pt);

			\begin{scope}[line width=1.5pt]
			\draw (-3,1.5)--(-3,5)--(3,1.5);
			\draw (2,2.7)--(3,5)--(4,2.7);
				\begin{scope}[start chain]
				\foreach \i in {0.5,-0.5,-1.5}
				\draw (-3,\i)--(3,\i);
				\end{scope}
			\end{scope}
			\begin{scope}[line width=0.6pt]
			\draw[rounded corners]  (-3,1.5) to [out=235, in=-70] (-4,2) to [out=50, in=-130] (-1.8,2.5) to [out=110, in=-10] (-3,3.4) to [out=-170, in=80] (-4.5,1.7) to [out=-80, in=145] (-3.5,0.5) to [out=-120, in=20] (-4.5,-0.5) 
 to [out=-110, in=175]  (-3,-3.2) to [out=0, in= -110] (-1.5,-2.2) to [out=130, in=-45] (-3,-1.5);		
 			\draw[rounded corners]  (-3,0.5) to [out=-45, in=150] (-2.2,0) to [out=30, in=-170] (-1.5,2) to [out=-50, in= 120] (-1,-0.5) to [out=-110, in=20] (-1.8,-1.3) to [out=170, in= -50] (-3,-0.5);
			 \draw[rounded corners]  (3,-0.5) to [out=40, in=-170] (4,-0.2) to [out=-20, in= 110] (5,-1) to [out=-110, in= 50] (3.5,-2) to [out=-45, in= 160] (4.5, -2.5) to [out=-140, in= 20] (3,-3.5) to [out=170, in= -70] (2,-2.7) to [out=75, in= -150] (3,-1.5);
			\draw[rounded corners] (2,2.7) to [out=-120, in= 140]  (1.75,-2) to [out=20, in= -155]  (2.5,2.3) to [out=-30, in= 30] (3,1.5);
			\draw[rounded corners] (4,2.7)  to [out=-60, in= 110]  (5,1)  to [out=-100, in= 40]  (4.5,0) to [out=160, in=-30] (3.6,1.7) to [out=-135, in=110]  (3,0.5);
			\end{scope}
\end{tikzpicture}
}
\qquad
\subfloat[$J_{AB}^*$]{
\begin{tikzpicture}[scale=0.35]
			\draw  (-3,0) ellipse (2.3  and 3.8 );
			\draw (3,0) ellipse (2.3  and 3.8 );
			\node at (-3,-4.5) {$A$};
			\node at (3,-4.5) {$B$}; 			
			\draw (-3,5) circle(0.5);
			\draw (3,5) circle (0.5);
			\node at (-3,6) {$A_0$};
			\node at (3,6) {$B_0$};
			\fill (-3,5) circle (4pt);
			\fill (3,5) circle (4pt);
			\begin{scope}[start chain]
			\foreach \i in {1.5,0.5,...,-1.5}
			\fill (-3,\i) circle (4pt);
			\end{scope}
			\begin{scope}[start chain]
			\foreach \i in {1.5,0.5,...,-1.5}
			\fill (3,\i) circle (4pt);
			\end{scope}
			\fill (2,2.7) circle (4pt);
			\fill (4,2.7) circle (4pt);

			\begin{scope}[line width=1.5pt]
			\draw (-3,1.5)--(3,1.5);
			\draw (2,2.7)--(4,2.7);
				\begin{scope}[start chain]
				\foreach \i in {0.5,-0.5,-1.5}
				\draw (-3,\i)--(3,\i);
				\end{scope}
			\end{scope}

			\begin{scope}[line width=0.6pt]
			\draw[rounded corners]  (-3,1.5) to [out=235, in=-70] (-4,2) to [out=50, in=-130] (-1.8,2.5) to [out=110, in=-10] (-3,3.4) to [out=-170, in=80] (-4.5,1.7) to [out=-80, in=145] (-3.5,0.5) to [out=-120, in=20] (-4.5,-0.5) 
 to [out=-110, in=175]  (-3,-3.2) to [out=0, in= -110] (-1.5,-2.2) to [out=130, in=-45] (-3,-1.5);		
 			\draw[rounded corners]  (-3,0.5) to [out=-45, in=150] (-2.2,0) to [out=30, in=-170] (-1.5,2) to [out=-50, in= 120] (-1,-0.5) to [out=-110, in=20] (-1.8,-1.3) to [out=170, in= -50] (-3,-0.5);
			 \draw[rounded corners]  (3,-0.5) to [out=40, in=-170] (4,-0.2) to [out=-20, in= 110] (5,-1) to [out=-110, in= 50] (3.5,-2) to [out=-45, in= 160] (4.5, -2.5) to [out=-140, in= 20] (3,-3.5) to [out=170, in= -70] (2,-2.7) to [out=75, in= -150] (3,-1.5);
			\draw[rounded corners] (2,2.7) to [out=-120, in= 140]  (1.75,-2) to [out=20, in= -155]  (2.5,2.3) to [out=-30, in= 30] (3,1.5);
			\draw[rounded corners] (4,2.7)  to [out=-60, in= 110]  (5,1)  to [out=-100, in= 40]  (4.5,0) to [out=160, in=-30] (3.6,1.7) to [out=-135, in=110]  (3,0.5);
				\end{scope}
\end{tikzpicture}
}
\qquad
\subfloat[$J^*$]{
\begin{tikzpicture}[scale=0.35]
			\draw  (-3,0) ellipse (2.3  and 3.8 );
			\draw (3,0) ellipse (2.3  and 3.8 );
			\node at (-3,-4.5) {$A$};
			\node at (3,-4.5) {$B$}; 			
			\draw (-3,5) circle(0.5);
			\draw (3,5) circle (0.5);
			\node at (-3,6) {$A_0$};
			\node at (3,6) {$B_0$};
			\fill (-3,5) circle (4pt);
			\fill (3,5) circle (4pt);
			\begin{scope}[start chain]
			\foreach \i in {1.5,0.5,...,-1.5}
			\fill (-3,\i) circle (4pt);
			\end{scope}
			\begin{scope}[start chain]
			\foreach \i in {1.5,0.5,...,-1.5}
			\fill (3,\i) circle (4pt);
			\end{scope}
			\fill (2,2.7) circle (4pt);
			\fill (4,2.7) circle (4pt);

			\begin{scope}[line width=1.5pt]
			\draw (-3,1.5)--(-3,0.5);
			\draw (-3,-1.5)--(-3,-0.5);
			\draw (2,2.7)--(4,2.7);
			\draw (3,0.5)to[out=-75, in=75](3,-0.5);
			\draw (3,1.5)to[out=-120, in=120](3,-1.5);
			\end{scope}

			\begin{scope}[line width=0.6pt]		
			\draw[rounded corners]  (-3,1.5) to [out=235, in=-70] (-4,2) to [out=50, in=-130] (-1.8,2.5) to [out=110, in=-10] (-3,3.4) to [out=-170, in=80] (-4.5,1.7) to [out=-80, in=145] (-3.5,0.5) to [out=-120, in=20] (-4.5,-0.5) 
 to [out=-110, in=175]  (-3,-3.2) to [out=0, in= -110] (-1.5,-2.2) to [out=130, in=-45] (-3,-1.5);		
 			\draw[rounded corners]  (-3,0.5) to [out=-45, in=150] (-2.2,0) to [out=30, in=-170] (-1.5,2) to [out=-50, in= 120] (-1,-0.5) to [out=-110, in=20] (-1.8,-1.3) to [out=170, in= -50] (-3,-0.5);
			 \draw[rounded corners]  (3,-0.5) to [out=40, in=-170] (4,-0.2) to [out=-20, in= 110] (5,-1) to [out=-110, in= 50] (3.5,-2) to [out=-45, in= 160] (4.5, -2.5) to [out=-140, in= 20] (3,-3.5) to [out=170, in= -70] (2,-2.7) to [out=75, in= -150] (3,-1.5);
			\draw[rounded corners] (2,2.7) to [out=-120, in= 140]  (1.75,-2) to [out=20, in= -155]  (2.5,2.3) to [out=-30, in= 30] (3,1.5);
			\draw[rounded corners] (4,2.7)  to [out=-60, in= 110]  (5,1)  to [out=-100, in= 40]  (4.5,0) to [out=160, in=-30] (3.6,1.7) to [out=-135, in=110]  (3,0.5);
			\end{scope}
\end{tikzpicture}
}
\caption{The thick lines illustrate the edges of $J$, $J_{AB}^*$ and $J^*$, respectively.}
\label{fig2}
\end{figure}

\begin{prop} \label{prop:ES}
Suppose that $A,A_0,B,B_0$ forms a partition of a vertex set $V$.
Let $J$ be an exceptional system. Let $C_A$ and $C_B$ be two cycles such that 
\begin{itemize}
	\item $C_A$ is a Hamilton cycle on $A$ that is consistent with $J_A^*$;
	\item $C_B$ is a Hamilton cycle on $B$ that is consistent with $J_B^*$.
\end{itemize}
Then the following assertions hold. 
\begin{itemize}
	\item[\rm (i)] If $J$ is a Hamilton exceptional system, then $C_A+C_B - J^* +J$ is a Hamilton cycle on $V$.
	\item[\rm (ii)] If $J$ is a  matching exceptional system, then $C_A+C_B - J^* +J$ is the union of a Hamilton cycle on $A'$ and a Hamilton cycle on $B'$.
In particular, if both $|A'|$ and $|B'|$ are even, then $C_A+C_B - J^* +J$ is the union of two edge-disjoint perfect matchings on $V$.
\end{itemize}
\end{prop}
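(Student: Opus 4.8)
The plan is to show directly that $H:=C_A+C_B-J^*+J$ is a $2$-regular graph spanning $V$, and then to count its components (identifying their vertex sets in case~(ii)). First I would note that by consistency $J^*_A\subseteq C_A$ and $J^*_B\subseteq C_B$, so $J^*\subseteq C_A+C_B$ and $H=(C_A-J^*_A)+(C_B-J^*_B)+J$. A routine degree count then gives $2$-regularity: every $v\in V_0$ is an internal vertex of a path of $J$ (hence $d_J(v)=2$) and is untouched by $C_A$, $C_B$, $J^*$; $V(J^*)$ is precisely the set of endpoints of the nontrivial paths of $J$, each having $d_J=1$ and degree~$2$ in $C_A$ or in $C_B$; and the remaining vertices of $A\cup B$ retain their degree~$2$ from $C_A$ or $C_B$ and meet neither $J^*$ nor $J$. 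So $d_H(v)=2$ for all $v\in V$, and $V(H)=A\cup B\cup V(J)=V$ since $V_0\subseteq V(J)\subseteq V$; thus $H$ is a disjoint union of cycles covering $V$ and it only remains to count components.

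Next I would suppress all internal vertices of the paths of $J$: these lie in $V_0$ and have degree~$2$ in $H$, so the number of components is unchanged and each component loses only its $V_0$-vertices, while each path $P_i$ of $J$ from $x_i$ to $y_i$ is replaced by the single edge $x_iy_i$. This yields $\widehat H=(C_A-J^*_A)+(C_B-J^*_B)+J^*_{AB}$, and cancelling $J^*_{AB}[A]\subseteq J^*_A\subseteq C_A$ against itself (and similarly for $B$) gives $\widehat H=C_A+C_B-M_1+M_2$, where $M_1:=\{x_{2i-1}x_{2i}:i\le\ell\}\cup\{y_{2i}y_{2i+1}:i\le\ell\}\subseteq C_A+C_B$ and $M_2:=J^*_{AB}[A,B]=\{x_iy_i:i\le2\ell\}$ is edge-disjoint from $C_A+C_B$.

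If $J$ is a matching exceptional system, then $e_J(A',B')=0$, so $\ell=0$, $M_1=M_2=\emptyset$ and $\widehat H=C_A+C_B$ has exactly the two components $C_A$ and $C_B$; moreover each path of $J$ then lies entirely in $A'$ or entirely in $B'$, so the suppressed vertices of $A_0$ go into the $C_A$-component and those of $B_0$ into the $C_B$-component, whence the two components of $H$ have vertex sets $A'$ and $B'$ --- this is~(ii), and the ``in particular'' part follows since a cycle on an even vertex set is the edge-disjoint union of its two alternate-edge perfect matchings. If $J$ is a Hamilton exceptional system, then $2\ell>0$; here $\widehat H[A]$ is obtained from the cycle $C_A$ by deleting the $\ell\ge1$ vertex-disjoint edges $x_{2i-1}x_{2i}$, hence consists of $\ell$ paths covering $A$, and since $C_A$ is consistent with $J^*_A$ (it contains these edges and visits $x_1,\dots,x_{2\ell}$ in this cyclic order) the endpoint pairs of these paths are $\{x_2,x_3\},\{x_4,x_5\},\dots,\{x_{2\ell},x_1\}$; symmetrically the $\ell$ paths obtained from $C_B$ have endpoint pairs $\{y_1,y_2\},\{y_3,y_4\},\dots,\{y_{2\ell-1},y_{2\ell}\}$. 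Since $M_2$ joins $x_j$ to $y_j$ for each $j\le2\ell$, these $2\ell$ paths are glued end to end, and starting at $x_1$ and following the gluing one traverses them cyclically in the order (path $\{x_{2\ell},x_1\}$), (path $\{y_1,y_2\}$), (path $\{x_2,x_3\}$), (path $\{y_3,y_4\}$), $\dots$, (path $\{y_{2\ell-1},y_{2\ell}\}$), returning to $x_1$ after using every path exactly once. So $\widehat H$, and hence $H$, is a single cycle through $V$, i.e.\ a Hamilton cycle, proving~(i).

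The only genuinely delicate point --- and the step I expect to be the main, if not deep, obstacle --- is the index bookkeeping in the last paragraph: verifying that deleting the matching $M_1$ from $C_A$ (resp.\ $C_B$) leaves paths with exactly those endpoint pairs, which is precisely where the consistency hypotheses on $C_A$ and $C_B$ must be used, and then that the $M_2$-gluing closes into a single cycle rather than several; the degenerate case $\ell=0$ must also be kept separate from this argument throughout.
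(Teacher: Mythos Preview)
Your proof is correct. The paper does not actually prove this proposition here; it only states it and refers to~\cite{paper1} for the proof, so there is nothing in the present paper to compare your argument against. That said, your approach---first verifying $2$-regularity, then suppressing the internal $V_0$-vertices of the paths in $J$ to reduce to $\widehat H = C_A + C_B - M_1 + M_2$, and finally doing the index bookkeeping to see that the $M_2$-edges glue the $2\ell$ arcs into a single cycle---is the natural one, and your identification of the consistency hypothesis as precisely what forces the arcs of $C_A - M_1$ and $C_B - M_1$ to have the ``interleaving'' endpoint pairs $\{x_{2i},x_{2i+1}\}$ and $\{y_{2i-1},y_{2i}\}$ is exactly the point of the construction. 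The minor imprecision in your final traversal description (the order in which you list the paths) does not affect correctness: either direction around the glued cycle visits all $2\ell$ paths and all $2\ell$ edges of $M_2$ exactly once.
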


As mentioned in Section~\ref{sec:sketch}, we will orient and order the edges of $J^*_A$ and $J^*_{B}$ in a suitable way
to obtain $J^*_{A, {\rm dir}}$ and $J^*_{B, {\rm dir}}$.
Accordingly, we will actually need an oriented version of Proposition~\ref{prop:ES}.
For this,  we first orient the edges of $J^*_{A}$ by orienting the edge $x_{2i-1} x_{2i}$ from $x_{2i-1}$ to $x_{2i}$ for all $i \le \ell$ and the edges of $J^*_{AB}[A]$ arbitrarily. 
Next we order these directed edges as $f_1, \dots, f_{\ell_A}$ such that $f_i = x_{2i-1} x_{2i}$ for all $i \le \ell$, where $\ell_A := e(J_A^*)$.
Define $J_{A, { \rm dir } }^*$ to be the ordered directed matching $\{f_1, \dots, f_{\ell_A} \}$.
Similarly, to define $J_{B, { \rm dir } }^*$, we first orient the edges of $J^*_{B}$ by orienting the edge $y_{2i} y_{2i+1}$
from $y_{2i}$ to $y_{2i+1}$ for all $i \le \ell$ and the edges of $J^*_{AB}[B]$ arbitrarily. 
Next we order these directed edges as $f_1', \dots, f_{\ell_B}'$ such that $f_i' = y_{2i} y_{2i+1}$ for all $i \le \ell$, where $\ell_B := e(J_B^*)$.
Define $J_{B, { \rm dir } }^*$ to be the ordered directed matching $\{f_1', \dots, f_{\ell_B}' \}$.
Note that if $J$ is an $(i,i')$-ES, then $V(J_{A, { \rm dir } }^*) \subseteq A_i$ and $V(J_{B, { \rm dir } }^*) \subseteq B_{i'}$.
Recall from Section~\ref{system} that a directed cycle $C_{A, {\rm dir}}$ is \emph{consistent with} $J_{A, { \rm dir } }^*$
if $C_{A, {\rm dir}}$ contains $J_{A, { \rm dir } }^*$ and visits the edges $f_1, \dots ,f_{\ell_A}$ in this order.%
   \COMMENT{Daniela: reworded} 
The following proposition follows easily from Proposition~\ref{prop:ES}.

\begin{prop} \label{prop:ES2}
Suppose that $A,A_0,B,B_0$ forms a partition of a vertex set $V$.
Let $J$ be an exceptional system. Let $C_{A, {\rm dir}}$ and $C_{B, {\rm dir}}$ be two directed cycles such that 
\begin{itemize}
	\item $C_{A, {\rm dir}}$ is a directed Hamilton cycle on $A$ that is consistent with $J_{A, {\rm dir}}^*$;
	\item $C_{B, {\rm dir}}$ is a directed Hamilton cycle on $B$ that is consistent with $J_{B, {\rm dir}}^*$.
\end{itemize}
Then the following assertions hold, where $C_A$ and $C_B$ are the undirected cycles obtained from $C_{A, {\rm dir}}$
and $C_{B, {\rm dir}}$ by ignoring the directions of all the edges. 
\begin{itemize}
	\item[\rm (i)] If $J$ is a Hamilton exceptional system, then $C_A+C_B - J^* +J$ is a Hamilton cycle on $V$.
		
	\item[\rm (ii)] If $J$ is a  matching exceptional system, then $C_A+C_B - J^* +J$ is the union of a Hamilton cycle on $A'$ and a Hamilton cycle on $B'$.
In particular, if both $|A'|$ and $|B'|$ are even, then $C_A+C_B - J^* +J$ is the union of two edge-disjoint perfect matchings on $V$.
\end{itemize}
\end{prop}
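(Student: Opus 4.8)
The plan is to reduce directly to Proposition~\ref{prop:ES}. The only difference between the two statements is that the matchings and cycles in Proposition~\ref{prop:ES2} carry orientations and orderings, whereas the object $C_A+C_B-J^*+J$ appearing in the conclusion depends only on the underlying \emph{undirected} cycles $C_A$ and $C_B$. So it suffices to show that $C_A$ is consistent with $J_A^*$ and that $C_B$ is consistent with $J_B^*$ in the (undirected) sense of Section~\ref{sec:J*2clique}; parts~(i) and~(ii) then follow at once by invoking Proposition~\ref{prop:ES}.

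First I would record the trivial half: by construction $J_{A,{\rm dir}}^*$ is obtained from $J_A^*$ simply by choosing an orientation and an ordering of its edges, so $C_{A,{\rm dir}}\supseteq J_{A,{\rm dir}}^*$ gives $C_A\supseteq J_A^*$, and likewise $C_B\supseteq J_B^*$. It remains to verify the vertex-ordering condition. Recall that in the definition of $J_{A,{\rm dir}}^*$ the first $\ell$ edges are $f_i=x_{2i-1}x_{2i}$, oriented from $x_{2i-1}$ to $x_{2i}$. Since $C_{A,{\rm dir}}$ is consistent with $J_{A,{\rm dir}}^*$, traversing $C_{A,{\rm dir}}$ in its own direction we meet $f_1,\dots,f_{\ell_A}$ in this order, and in particular we meet $f_1,\dots,f_\ell$ in this order; as $f_i$ enters $x_{2i-1}$ before $x_{2i}$, this orientation of $C_A$ visits $x_1,x_2,x_3,\dots,x_{2\ell}$ in this order, so $C_A$ is consistent with $J_A^*$.

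The argument for $B$ is identical up to a cosmetic twist coming from the cyclic shift in the definition of $J_B^*$: there the fictive edges are $y_{2i}y_{2i+1}$ (indices mod $2\ell$), ordered in $J_{B,{\rm dir}}^*$ as $f_i'=y_{2i}\to y_{2i+1}$ for $i\le\ell$. Traversing $C_{B,{\rm dir}}$ in its direction thus visits $y_2,y_3,y_4,y_5,\dots,y_{2\ell},y_1$ in this order, which as a cyclic ordering of the vertices is exactly $y_1,y_2,\dots,y_{2\ell}$; hence $C_B$ is consistent with $J_B^*$. Applying Proposition~\ref{prop:ES} to $C_A$ and $C_B$ now delivers both~(i) and~(ii).

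There is essentially no obstacle here — the statement "follows easily" precisely because consistency for the \emph{directed, ordered} matching was defined so as to force consistency for the underlying undirected matching. The only point requiring any care is the index bookkeeping (matching each $f_i$ to the pair $x_{2i-1}x_{2i}$, and the modular shift in the $B$-case); everything else is immediate.
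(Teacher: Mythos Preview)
Your proposal is correct and takes essentially the same approach as the paper, which simply states that the proposition ``follows easily from Proposition~\ref{prop:ES}'' without writing out the details. Your verification that directed consistency implies undirected consistency (including the cyclic-shift bookkeeping for the $B$-side) is exactly the routine check the paper leaves to the reader.
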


\subsection{Finding systems}
In this subsection, we will decompose (and orient) $G[A]$ into cyclic systems
$(G_{A,j,{\rm dir}}, \mathcal{Q}_A, C_{A,j})$, one for each $j\le (K-1)/2$.
Roughly speaking, this corresponds to a decomposition into (oriented) blown-up Hamilton cycles.
We will achieve this by considering a Hamilton decomposition of $\mathcal{R}_A$, where 
$\mathcal{R}_A$ is the complete graph on $\{A_1, \dots, A_K \}$. So each $C_{A,j}$ corresponds to one of the
Hamilton cycles in this Hamilton decomposition. We will split the set $\{J^*_{A,{\rm dir}} : J \in \mathcal{J} \}$
into subsets $\mathcal{J}^*_{A,j}$ and assign $\mathcal{J}^*_{A,j}$ to the $j$th cyclic system. Moreover,
for each $j\le (K-1)/2$,%
   \COMMENT{Daniela replaced $j\le (K-1)/2$ by $j\le (K-1)/2$}
we will also set aside a sparse spanning subgraph $H_{A,j}$%
\COMMENT{Andy:replaced $H_j$ with $H_{A,j}$.}
of $G[A]$, which is removed from $G[A]$
before the decomposition into cyclic systems. $H_{A,j}$%
\COMMENT{Andy:replaced $H_j$ with $H_{A,j}$.}
will be used later on in order to
find a balanced extension of $\mathcal{J}^*_{A,j}$. We proceed similarly for $G[B]$.

\begin{lemma}\label{sysdecom}
Suppose that $0<1/n \ll \eps_0  \ll 1/K \ll \rho    \ll 1$ and $0 \leq \mu \ll 1$,
where $n,K \in \mathbb N$ and $K$ is odd.
Suppose that $G$ is a graph on $n$ vertices and $\mathcal{P}$ is a $(K, m, \eps _0)$-partition of $V(G)$.
Furthermore, suppose that the following conditions hold:%
\begin{itemize}
	\item[{\rm (a)}] $d(v,A_i) = (1 - 4 \mu \pm 4 /K) m $ and $d(w,B_i) = (1 - 4 \mu \pm 4 /K) m $ for all
	$v \in A$, $w \in B$ and $1\leq i \leq K$.
	\item[{\rm (b)}] There is a set $\mathcal J$ which consists of at most $(1/4-\mu - \rho)n$ edge-disjoint exceptional systems with parameter $\eps_0$ in~$G$.
	\item[{\rm (c)}] $\mathcal J$ has a partition into $K^2$ sets $\mathcal J_{i,i'}$ (one for all $1\le i,i'\le K$) such that each
$\mathcal J_{i,i'}$ consists of precisely $|\mathcal J|/{K^2}$ $(i,i')$-ES with respect to~$\cP$.
   \item[{\rm (d)}] If $\mathcal{J}$ contains matching exceptional systems then $|A'|=|B'|$ is even.\end{itemize}%
Then for each $1 \le j \le (K-1)/2$, there is a pair of tuples $(G_{A,j}, \mathcal{Q}_{A}, C_{A,j}, H_{A,j}, \mathcal{J}^*_{A,j})$
and $(G_{B,j}, \mathcal{Q}_{B}, C_{B,j}, H_{B,j}, \mathcal{J}^*_{B,j})$  such that the following assertions hold:
\begin{itemize}
\item[{\rm (a$_1$)}] Each of $C_{A,1}, \dots, C_{A,(K-1)/2}$ is a directed Hamilton cycle on $\mathcal{Q}_A := \{A_1, \dots, A_K \}$
such that the undirected versions of these Hamilton cycles form a Hamilton decomposition of the complete graph on $\mathcal{Q}_A$.
\item[{\rm (a$_2$)}] $\mathcal{J}^*_{A,1}, \dots, \mathcal{J}^*_{A,(K-1)/2}$ is a partition of $\{J^*_{A,{\rm dir}} : J \in \mathcal{J} \}$.
\item[{\rm (a$_3$)}] Each $\mathcal{J}^*_{A,j}$ has a partition into $K$ sets $\mathcal J^*_{A,j,i}$ (one for each $1\le i\le K$)
such that $|\mathcal J^*_{A,j,i}| \le (1- 4\mu - 3\rho) m/K$ and 
each $J^*_{A, {\rm dir}} \in \mathcal J^*_{A,j,i}$ is an ordered directed matching with $e(J^*_{A, {\rm dir}})\le 5 K \sqrt{\eps_0} m$
and $V(J^*_{A, {\rm dir}})\subseteq A_i$.
\item[{\rm (a$_4$)}] $G_{A,1},\dots, G_{A,(K-1)/2}, H_{A,1},\dots,  H_{A,(K-1)/2}$ are edge-disjoint subgraphs of $G[A]$.
\item[{\rm (a$_5$)}] $H_{A,j}[A_{i},A_{i'}]$ is a $10K\sqrt{\eps_0}m$-regular graph for all $j \le (K-1)/2$ and all $i,i' \le K$ with $i \ne i'$.%
\COMMENT{We have omitted the floor/ceiling on $10K\sqrt{\eps_0}m$.}
\item[{\rm (a$_6$)}] For each $j \le (K-1)/2$, there exists an orientation $G_{A,j,{\rm dir}}$ of $G_{A,j}$ such
that $(G_{A,j,{\rm dir}}, \mathcal{Q}_A, C_{A,j})$ is a $(K,m, 4\mu, 5/K)$-cyclic system.
\item[{\rm (a$_7$)}] Analogous statements to {\rm(a$_1$)--(a$_6$)} hold for $C_{B,j}, \mathcal{J}^*_{B,j},G_{B,j}, H_{B,j}$ for all
$j \le (K-1)/2$, with $\mathcal{Q}_B := \{ B_1, \dots, B_K\}$.
\end{itemize}
\end{lemma}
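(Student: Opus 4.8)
The plan is to construct the two families of directed Hamilton cycles $C_{A,j}$, $C_{B,j}$ directly from Walecki's theorem, to carve the sparse regular spanning subgraphs $H_{A,j}$, $H_{B,j}$ out of the bipartite graphs between pairs of clusters by repeatedly removing perfect matchings, and to split $\mathcal{J}$ --- hence the associated ordered directed matchings $J^*_{A,{\rm dir}}$ and $J^*_{B,{\rm dir}}$ --- evenly across the $(K-1)/2$ cyclic systems. Throughout we use that $n = 2Km + |V_0|$ with $|V_0| \le \eps_0 n$, so that $2Km \le n \le 2Km/(1-\eps_0)$. For the cyclic structure, let $\mathcal{R}_A$ be the complete graph on $\mathcal{Q}_A := \{A_1,\dots,A_K\}$. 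Since $K$ is odd, Walecki's theorem~\cite{lucas} decomposes $\mathcal{R}_A$ into $(K-1)/2$ Hamilton cycles, and fixing a cyclic orientation of each gives directed Hamilton cycles $C_{A,1},\dots,C_{A,(K-1)/2}$ on $\mathcal{Q}_A$ whose undirected versions form a Hamilton decomposition of $\mathcal{R}_A$; this is~(a$_1$). Applying the same to the complete graph on $\mathcal{Q}_B := \{B_1,\dots,B_K\}$ gives $C_{B,1},\dots,C_{B,(K-1)/2}$, the first part of~(a$_7$).

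Next I would extract $H_{A,j}$ and $G_{A,j}$. Fix an unordered pair $i\ne i'$. By~(a), $G[A_i,A_{i'}]$ is a balanced bipartite graph with vertex classes of size $m$ and minimum degree at least $(1-4\mu-4/K)m$; since $4\mu + 4/K + 5K(K-1)\sqrt{\eps_0} < 1/2$ (using $\mu, 1/K \ll 1$ and $\eps_0 \ll 1/K$), this minimum degree remains at least $m/2$ after the removal of any $5K(K-1)\sqrt{\eps_0}m$ edge-disjoint perfect matchings. As a balanced bipartite graph of minimum degree at least $m/2$ has a perfect matching (Hall's theorem), we may successively remove $5K(K-1)\sqrt{\eps_0}m$ edge-disjoint perfect matchings from $G[A_i,A_{i'}]$; grouping them into $(K-1)/2$ blocks of $10K\sqrt{\eps_0}m$ matchings, let $H_{A,j}[A_i,A_{i'}]$ be the $j$th block (a $10K\sqrt{\eps_0}m$-regular bipartite graph) and let $L_{A,i,i'}$ be what remains of $G[A_i,A_{i'}]$. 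Set $H_{A,j} := \sum_{\{i,i'\}} H_{A,j}[A_i,A_{i'}]$ and $G_{A,j} := \sum_{A_iA_{i'}\in E(C_{A,j})} L_{A,i,i'}$. Since the $C_{A,j}$ decompose $E(\mathcal{R}_A)$, these $K-1$ graphs are pairwise edge-disjoint subgraphs of $G[A]$ (the edges of $G$ inside the clusters being discarded), giving~(a$_4$), and~(a$_5$) is immediate. For every directed edge $A_iA_{i'}$ of $C_{A,j}$ and every $v\in A_i$ we have $d_{G_{A,j}}(v,A_{i'}) = d_G(v,A_{i'}) - 5K(K-1)\sqrt{\eps_0}m = (1-4\mu\pm5/K)m$, since $5K(K-1)\sqrt{\eps_0}\le 1/K$; the analogue holds for $w\in A_{i'}$. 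Orienting every edge of $G_{A,j}$ between $A_i$ and $A_{i'}$ from $A_i$ to $A_{i'}$ whenever $A_iA_{i'}$ is a directed edge of $C_{A,j}$ produces $G_{A,j,{\rm dir}}$, which winds around $C_{A,j}$ and, by this degree bound, makes $(G_{A,j,{\rm dir}},\mathcal{Q}_A,C_{A,j})$ a $(K,m,4\mu,5/K)$-cyclic system --- i.e.~(a$_6$). The identical construction applied to each $G[B_i,B_{i'}]$ completes~(a$_7$).

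It remains to distribute $\mathcal{J}$. For each $(i,i')$, split $\mathcal{J}_{i,i'}$ into $(K-1)/2$ sets of sizes pairwise differing by at most one, let $\mathcal{J}_j$ be the union over all $(i,i')$ of the $j$th of these, and set $\mathcal{J}^*_{A,j} := \{J^*_{A,{\rm dir}} : J\in\mathcal{J}_j\}$ and $\mathcal{J}^*_{A,j,i} := \{J^*_{A,{\rm dir}} : J\in\mathcal{J}_j \text{ and } J\in\mathcal{J}_{i,i'}\text{ for some }i'\}$, and analogously $\mathcal{J}^*_{B,j}$ and $\mathcal{J}^*_{B,j,i'}$; this yields~(a$_2$) and its $B$-analogue. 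For~(a$_3$): each $J\in\mathcal{J}_{i,i'}$ is an $(i,i')$-ES, so $V(J^*_{A,{\rm dir}})\subseteq A_i$; by~\eqref{ESeq}, $e(J^*_{A,{\rm dir}}) = e(J^*_A) \le e(J^*) \le |V_0| + \sqrt{\eps_0}n \le 2\sqrt{\eps_0}n \le 5K\sqrt{\eps_0}m$; and, using $|\mathcal{J}_{i,i'}| = |\mathcal{J}|/K^2$, $|\mathcal{J}| \le (1/4-\mu-\rho)n$ and $n\le 2Km/(1-\eps_0)$,
$$|\mathcal{J}^*_{A,j,i}| \le \sum_{i'=1}^{K}\left\lceil\frac{2|\mathcal{J}_{i,i'}|}{K-1}\right\rceil \le \frac{2|\mathcal{J}|}{K(K-1)} + K \le \frac{(1-4\mu-4\rho)(1+2\eps_0)m}{K-1} + K \le \frac{(1-4\mu-3\rho)m}{K},$$
the last inequality using $1/n \ll \eps_0 \ll 1/K \ll \rho$ (so that $\rho K \gg 1$ absorbs the additive $K$ and the gap between the denominators $K-1$ and $K$, while $\eps_0 K \ll 1$). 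The same estimates with $B$ and $i'$ in place of $A$ and $i$ finish~(a$_7$). Condition~(d) plays no role in this lemma; it is needed only later, via Proposition~\ref{prop:ES2}(ii), to turn matching exceptional systems into pairs of perfect matchings.

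The only genuine obstacle I anticipate is the cardinality bound for $\mathcal{J}^*_{A,j,i}$ in~(a$_3$): it forces the use of $\rho\gg 1/K$ to absorb the rounding loss and the $K-1$ versus $K$ mismatch in the denominators, while $\eps_0 \ll 1/K$ is precisely what makes the $\Theta(K^2\sqrt{\eps_0}m)$ edges removed per cluster negligible compared with $m/K$ --- both for preserving the cyclic-system degree window $(1-4\mu\pm 5/K)m$ and for keeping the minimum degree at least $m/2$ throughout the matching extraction. Everything else is routine bookkeeping.
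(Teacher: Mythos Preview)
Your proof is correct and follows essentially the same approach as the paper's: Walecki's theorem for~(a$_1$), a near-equitable split of each $\mathcal{J}_{i,i'}$ across the $(K-1)/2$ indices $j$ for~(a$_2$) and~(a$_3$), extraction of the sparse regular subgraphs $H_{A,j}[A_i,A_{i'}]$ for~(a$_5$), and the obvious definition and orientation of $G_{A,j}$ for~(a$_4$) and~(a$_6$). The one point of divergence is how you obtain the $10K\sqrt{\eps_0}m$-regular pieces: you greedily strip $5K(K-1)\sqrt{\eps_0}m$ perfect matchings out of $G[A_i,A_{i'}]$ via Hall's theorem, whereas the paper first applies Lemma~\ref{regularsub} (a MaxFlowMinCut argument) to pass to a $(1-4\mu-\rho)m$-regular spanning subgraph and then peels off the $H_{A,j}$'s from that. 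Your route is marginally more elementary and perfectly adequate here, since each matching removed lowers every degree by exactly one, so the remaining degrees stay within $(1-4\mu\pm 5/K)m$ just as in the paper. The cardinality estimate for $|\mathcal{J}^*_{A,j,i}|$ and the observation that~(d) is not used in this lemma are both handled as in the paper.
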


\begin{proof}
Since $K$ is odd, by Walecki's theorem the complete graph on $\{A_1, \dots, A_K \}$ has a Hamilton decomposition.
(a$_1$) follows by orienting the edges of each of these Hamilton cycles to obtain directed Hamilton cycles $C_{A,1}, \dots, C_{A,(K-1)/2}$.

For each $i,i' \le K$, we partition $\mathcal J_{i,i'}$ into $(K-1)/2$ sets $\mathcal J_{i,i',j}$ (one for each $j\le (K-1)/2$)
whose sizes are as equal as possible.
Note that if $J \in \mathcal J_{i,i',j}$, then $J$ is an $(i,i')$-ES and so $V(J^*_{A, { \rm dir} }) \subseteq A_i$.
Since $\mathcal{P}$ is a $(K,m, \eps_0)$-partition of $V(G)$, $|V_0| \le \eps_0 n $ and $(1- \eps_0) n \le 2Km$.
Hence, 
\begin{align*}
e(J^*_{A, { \rm dir} }) \le e(J^*) \overset{\eqref{ESeq}}{\le} |V_0| + \sqrt{\eps_0}n 
\le 5 \sqrt{\eps_0} K m.
\end{align*}
(a$_2$) is satisfied by setting $\mathcal{J}^*_{A,j,i} := \bigcup_{i' \le K} \{J^*_{A, { \rm dir} }: J \in \mathcal J_{i,i',j}\} $
and $\mathcal{J}^*_{A,j} := \bigcup_{i \le K}  \mathcal{J}^*_{A,j,i}$.
Note that 
\begin{align*}
|\mathcal{J}^*_{A,j,i}| & \le \sum_{i' \le K}\left(\frac{2|\mathcal J_{i,i'}|}{K-1}+1 \right) \overset{{\rm (c)}}{=}  \frac{2|\mathcal J|}{K(K-1)}+K  
 \overset{{\rm (b)}}{\le} \frac{(1/2 - 2\mu - 2\rho) n}{K(K-1)} + K \\
& \le  (1-4 \mu - 3 \rho) m/K
\end{align*}
as $2K m  \ge (1- \eps_0)  n $ and $1 /n \ll \eps_0 \ll 1/K \ll \rho$.
Hence (a$_3$) holds.

For $i,i' \le K$ with $i \ne i'$, apply Lemma~\ref{regularsub} with $G[A_i,A_{i'}], 4/K, \rho, 4\mu$ playing the roles of $\Gamma, \eps, \rho, \mu$
to obtain a spanning $(1- 4 \mu - \rho)m$-regular subgraph $H_{i,i'}$ of $G[A_i,A_{i'}]$.
Since $H_{i,i'}$ is a regular bipartite graph and $\epszero \ll 1/K , \rho \ll 1$ and $0 \le \mu \ll 1$,
there exist $(K-1)/2$ edge-disjoint $10K \sqrt{ \epszero } m$-regular spanning subgraphs $H_{i,i',1}, \dots, H_{i,i',(K-1)/2}$ of $H_{i,i'}$.
Set $H_{A,j} := \sum_{1 \le i, i' \le K} H_{i,i',j}$ for each $j \le (K-1)/2$. So (a$_5$) holds.

Define $G_A:=G[A]-(H_{A,1}+\dots+H_{A,(K-1)/2})$.
Note that, as $\eps_0 \ll 1/K$, (a) implies that $d_{G_A}(v,A_i) = (1 - 4 \mu \pm 5 /K) m $ for all $v \in A$ and all $i \le K$.
For each $j \le (K-1)/2$, let $G_{A,j}$ be the graph on $A$ whose edge set is the union of $G_A[A_{i},A_{i'}]$ for each edge $A_iA_{i'} \in E(C_{A,j})$.
Define $G_{A,j, {\rm dir}}$ to be the oriented graph obtained from $G_{A,j}$ by orienting every edge in $G_A[A_{i},A_{i'}]$ from $A_i$ to $A_{i'}$
(for each edge $A_iA_{i'} \in E(C_{A,j})$).
Note that $ ( G_{A,j, {\rm dir}}, \mathcal{Q}_A, C_{A,j})$ is a $(K,m, 4\mu, 5/K) $-cyclic system for each $j \le (K-1)/2$.
Therefore, (a$_4$) and (a$_6$) hold. (a$_7$) can be proved by a similar argument.
\end{proof}

\subsection{Constructing balanced extensions} \label{sec:BE2clique}

Let $(G_{A,j}, \mathcal{Q}_A, C_{A,j}, H_{A,j} , \mathcal{J}^*_{A,j})$ be one of the $5$-tuples obtained by Lemma~\ref{sysdecom}.
The next lemma will be applied to find a balanced extension of $\mathcal{J}^*_{A,j}$ with respect to $(\mathcal{Q}_A,C_{A,j})$,
using edges of $H_{A,j}$ (after a suitable orientation of these edges).
Consider any $J^*_{A, {\rm dir}} \in \mathcal{J}^*_{A,j,i}$.%
   \COMMENT{Daniela: changed $\mathcal{J}^*_{A,j}$ to $\mathcal{J}^*_{A,j,i}$}
Lemma~\ref{sysdecom}(a$_3$) guarantees that
$V(J^*_{A, {\rm dir}}) \subseteq A_i$, and so $J^*_{A, {\rm dir}}$ is an $A_i$-extension of itself.
Therefore, in order to find a balanced extension of $\mathcal{J}^*_{A,j}$, it is enough to extend each $J^*_{A, {\rm dir}} \in \mathcal{J}^*_{A,j}$
into a locally balanced path sequence by adding a directed matching which is vertex-disjoint from $J^*_{A, {\rm dir}}$ in such a way that~(BE3)
is satisfied as well.

\begin{lemma}\label{balanceextension}
Suppose that $0<1/m \ll \eps  \ll 1$ and%
\COMMENT{Andy: added and} 
that $m,k \in \mathbb N$ with $k \ge 3$.%
\COMMENT{Need $k \ge 3$ to have a cycle. Note the switch from $K$ to $k$ is intentional here, as will have $k=2K$ in the bip section}
Let $\mathcal{Q} =\{V_1, \dots, V_k\}$ be a $(k,m)$-equipartition of a set $V$ of vertices and let $C = V_1 \dots V_k$ be a directed cycle.
Suppose that there exist a set $\mathcal{M}$ and a graph $H$ on $V$ such that the following conditions hold:
\begin{itemize}
\item[{\rm (i)}] $\mathcal{M}$ can be partitioned into $k$ sets $\mathcal M_1,\dots, \mathcal M_k$%
   \COMMENT{Daniela: reworded}
such that $|\mathcal M_i| \le m/k$ and each $M \in \mathcal M_{i}$ is an ordered directed matching with $e(M)\le \eps m$ and $V(M)\subseteq V_i$
(for all $i\le k$).
\item[{\rm (ii)}] $H[V_{i-1},V_{i+1}]$ is a $2 \eps m$-regular graph for all $i\le k$.
\end{itemize}
Then there exist an orientation $H_{{\rm dir}}$ of $H$ and a balanced extension $\mathcal{BE}$ of $\mathcal{M}$ with respect to
$(\mathcal{Q},C)$ and parameters $( 2 \eps , 3 )$ such that each path sequence in $\mathcal{BE}$ is obtained from some
$M\in \mathcal{M}$ by adding edges of~$H_{{\rm dir}}$.
\end{lemma}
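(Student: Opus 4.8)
The plan is to build each path sequence in $\mathcal{BE}$ from an element of $\mathcal M$ by adding a single matching that skips over one cluster. Fix $M\in\mathcal M_i$ and put $t:=e(M)\le\eps m$; by~(i) all $t$ edges of $M$ lie inside $V_i$, so every edge of $M$ has both endvertices in $V_i$ and $M$ has no edge meeting any other cluster. Hence, with respect to $C=V_1\dots V_k$ (indices taken mod~$k$ throughout), $M$ is locally balanced at every cluster edge except $V_{i-1}V_i$ and $V_iV_{i+1}$: it has $t$ edges with final vertex in $V_i$ and none with initial vertex in $V_{i-1}$, and $t$ edges with initial vertex in $V_i$ and none with final vertex in $V_{i+1}$. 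So if $N_M$ is any matching of size~$t$ all of whose edges join $V_{i-1}$ to $V_{i+1}$, oriented from $V_{i-1}$ towards $V_{i+1}$, then $PS_M:=M+N_M$ has exactly $t$ edges with initial vertex in $V_{i-1}$ and exactly $t$ with final vertex in $V_{i+1}$, which is precisely what is needed for $PS_M$ to be locally balanced with respect to~$C$. Moreover $N_M$ is vertex-disjoint from $V(M)\subseteq V_i$, so $PS_M$ is a vertex-disjoint union of directed edges (in particular a path sequence), each edge of $M$ is itself a directed path of $PS_M$ whose unique final vertex lies in $V_i$ (so $PS_M$ is a $V_i$-extension of $M$), and $V(PS_M)\subseteq V_{i-1}\cup V_i\cup V_{i+1}$ with at most $2t\le 2\eps m$ vertices in each of these clusters.

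It remains to choose the matchings $N_M$ (for all $i$ and all $M\in\mathcal M_i$) to be pairwise edge-disjoint with $N_M\subseteq H[V_{i-1},V_{i+1}]$ whenever $M\in\mathcal M_i$, and then to extend these choices to an orientation $H_{\rm dir}$ of~$H$. A pair of clusters at distance~$2$ on~$C$ has the form $\{V_{i-1},V_{i+1}\}$ for exactly one index~$i$ --- unless $k=4$, in which case for exactly two --- so at most $2m/k$ elements of $\mathcal M$ require a matching inside a fixed such pair, with at most $2\eps m^2/k$ edges in total. By~(ii) and König's edge-colouring theorem, the $2\eps m$-regular bipartite graph $H[V_{i-1},V_{i+1}]$ decomposes into $2\eps m$ perfect matchings, each of size~$m$. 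Processing the relevant $M$ in turn, cut a set $N_M$ of $e(M)\le\eps m$ unused edges out of the current perfect matching, advancing to the next one (and discarding the remainder of the current one) whenever fewer than $e(M)$ of its edges remain. Each $N_M$ obtained this way is a matching, and since the number of edges used or discarded in this pair is at most $4\eps m^2/k<2\eps m^2=e(H[V_{i-1},V_{i+1}])$ (using $k\ge 3$), the process never exhausts the supply of perfect matchings. Doing this over all distance-$2$ pairs --- which are pairwise edge-disjoint in~$H$ --- yields pairwise edge-disjoint matchings $N_M$ as required. Finally, orient each edge of each $N_M$ as prescribed above (this is consistent because the $N_M$ are edge-disjoint) and orient all remaining edges of $H$ arbitrarily; call the result $H_{\rm dir}$.

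Set $\mathcal{BE}:=\{PS_M:M\in\mathcal M\}$. Then~(BE1) holds, since each $PS_M$ is locally balanced and $PS_M-M=N_M$, so the digraphs $PS_M-M$ are pairwise edge-disjoint. Property~(BE2) holds with $\ell=3$: $PS_M$ is a $V_i$-extension of $M$ whenever $M\in\mathcal M_i$, and $|\mathcal M_i|\le m/k\le 3m/k$. Property~(BE3) also holds with $\ell=3$: $|V(PS_M)\cap V_j|\le 2\eps m$ for every $j$, and $V(PS_M)$ meets $V_j$ only when $M\in\mathcal M_{j-1}\cup\mathcal M_j\cup\mathcal M_{j+1}$, so at most $3m/k$ members of $\mathcal{BE}$ meet any fixed $V_j$. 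By construction every path sequence of $\mathcal{BE}$ is obtained from some $M\in\mathcal M$ by adding edges of $H_{\rm dir}$. I expect the only real content to be the local-balance identity in the first paragraph; the rest is bookkeeping, the one mild subtlety being that $H[V_{i-1},V_{i+1}]$ can coincide for two indices~$i$ when $k=4$, which is harmless since only edge-disjointness of the $N_M$ is needed for a consistent orientation.
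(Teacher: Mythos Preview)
Your proof is correct and follows essentially the same route as the paper's: for each $M\in\mathcal M_i$ you adjoin a directed matching $N_M$ of size $e(M)$ from $V_{i-1}$ to $V_{i+1}$, taken from the regular bipartite graph $H[V_{i-1},V_{i+1}]$ via a perfect-matching decomposition, and then verify (BE1)--(BE3). The only cosmetic differences are that the paper obtains its matchings by pre-splitting each of the $2\eps m$ perfect matchings into $1/(2\eps)$ pieces of size at least $\eps m$, whereas you cut greedily from one perfect matching at a time; and that you explicitly handle the case $k=4$ (where the unordered pair $\{V_{i-1},V_{i+1}\}$ coincides for two values of $i$), a point the paper glosses over since in its application $k=K$ is always odd.
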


\begin{proof}
Fix $i \le k$ and write $\mathcal{M}_i := \{ M_1, \dots, M_{|\mathcal{M}_i|}\}$.
We orient each edge in $H[V_{i-1},V_{i+1}]$ from $V_{i-1}$ to $V_{i+1}$.
By (ii), $H[V_{i-1},V_{i+1}]$ can be decomposed into $2 \eps m$ perfect matchings. 
Each perfect matching can be split into $1/2\eps $ matchings, each containing at least $ \eps m$ edges.%
    \COMMENT{Not sure why we need $1/2\eps $ instead of $1/\eps $ here. Is it because of the rounding? (Allan: Yes, it is due to rounding.)}
Recall from (i) that $|\mathcal{M}_i| \le m/k$ and $e(M_j ) \le \eps m$ for all $M_j  \in \mathcal{M}_i$.
Hence, $H[V_{i-1},V_{i+1}]$ contains $|\mathcal{M}_i|$ edge-disjoint matchings $M'_1, \dots, M'_{|\mathcal{M}_i|}$ with $e(M'_j) = e(M_j)$ for all $j \le |\mathcal{M}_i|$.
Define $PS_j:=M_j \cup M_j'$.
Note that $PS_j$ is locally balanced with respect to $C$.
Also, $PS_j$ is a $V_i$-extension of $M_j$ (as $M_j \in \mathcal{M}_i$ and so $V(M_j) \subseteq V_i$ by~(i)).
Moreover,
\begin{align}\label{eq:new}
	|V(PS_j) \cap V_{i'}|  = \begin{cases}
		|V(M_j)| =2 e(M_j ) \le 2 \eps m & \textrm{if $i' = i$,}\\
		e(M_j ) \le \eps m & \text{if $i' = i+1$ or $i' = i-1$,}\\
		0 & \text{otherwise.}
	\end{cases}
\end{align}
For each $i\le k$, set $\mathcal{PS}_i := \{ PS_1, \dots, PS_{|\mathcal{M}_i|}\}$.
Therefore, $\mathcal{BE}: = \bigcup_{i \le k} \mathcal{PS}_i$ is a balanced extension of $\mathcal{M}$ with respect to $(\mathcal{Q},C)$ and parameters $(2\eps, 3)$.
Indeed, (BE3) follows from (\ref{eq:new}). As remarked after the definition of a balanced extension, this also implies the
`moreover part' of (BE2).%
    \COMMENT{Daniela: last 2 sentences are new. Previously had "For (BE2), note that the number of $V_i$-extensions is $|\mathcal{M}_i| \le m/k$."}
Hence the lemma follows (by orienting the remaining edges of $H$ arbitrarily).
\end{proof}


\section{Constructing Hamilton cycles via balanced extensions}  \label{sec:extendmerge}

Recall that a cyclic system can be viewed as a blow-up of a Hamilton cycle.%
   \COMMENT{Deryk: new sentence}
Given a cyclic system $(G,\mathcal{Q}, C)$ and a balanced extension~$\mathcal{BE}$ of a set $\mathcal{M}$
of ordered directed matchings,
our aim is to extend each path sequence in $\mathcal{BE}$ into a Hamilton cycle using edges of~$G$.
Moreover, each Hamilton cycle has to be consistent with a distinct $M \in \mathcal{M}$. This is achieved by the
following lemma, which is the key step in proving Lemma~\ref{almostthm}.%
   \COMMENT{Deryk: new sentence}

\begin{lemma}\label{merging}
Suppose that $0<1/m \ll \eps_0, \eps' , 1/k \ll 1/\ell , \rho  \leq 1$, that $0 \le \mu , \rho \ll 1$\COMMENT{It is intentional that I have stated two conditions with $\rho$ in. Thought
it was the easiest way to write $\eps_0, \eps' , 1/k \ll 1/\ell , \rho $, and $1/\ell \leq 1$ and
$\rho \ll 1$.}
 and that $m,k, \ell, q\in \mathbb{N}$ with $q \le (1- \mu - \rho) m$.
Let $(G,\mathcal{Q}, C)$ be a $(k,m, \mu, \eps')$-cyclic system and let $\mathcal{M} = \{M_1, \dots, M_q\}$
be a set of $q$ ordered directed matchings.%
	\COMMENT{Allan: $M_1, \dots, M_1$ are no longer edge-disjoint. Reason:
In two cliques case, $J^*_A = M_1$. This has lots of knock on effect on the rest....
The next two sentences also changed.}
Suppose that $\mathcal{BE} = \{ PS_1, \dots, PS_q \}$ is a balanced extension of $\mathcal{M}$ with respect to $(\mathcal{Q},C)$ and
parameters $(\eps_0, \ell)$ such that for each $s \le q$, $M_s \subseteq PS_s$.
Then there exist $q$ Hamilton cycles $C_1, \dots, C_q$ in $G + \mathcal{BE}$ such that for all $s \le q$, $C_s$ contains $PS_s$ and is consistent with $M_s$, and such that $C_1 - PS_1, \dots, C_q - PS_q$ are edge-disjoint subgraphs of $G$.
\end{lemma}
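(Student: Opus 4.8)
The plan is to build the $q$ Hamilton cycles one at a time, greedily, peeling edges off $G$ as we go. At stage $s$ we will have used up the edges of $C_1-PS_1,\dots,C_{s-1}-PS_{s-1}$; since $q\le (1-\mu-\rho)m$ and each Hamilton cycle $C_t$ uses at most $2m$ edges between any consecutive pair of clusters of $C$ (in fact exactly $m$ between each consecutive pair, apart from the edges of $PS_t$), the `used' digraph $G_{\mathrm{used}}$ has maximum in- and outdegree at most roughly $q\le (1-\mu-\rho)m$ into each neighbouring cluster. Hence the remaining digraph $G_s:=G-G_{\mathrm{used}}$ still satisfies $d^+_{G_s}(u,W)=(1-\mu\pm\eps'\pm o(1))m-q\ge (\rho/2)m$ and similarly for indegrees, for every edge $UW$ of $C$; more importantly, the bipartite graph $G_s[U,W]$ between consecutive clusters is still `superregular enough'. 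The key tool is Theorem~\ref{expanderthm} (the robust-outexpander Hamilton cycle theorem with prescribed vertices to be visited in order), so I first need to exhibit the relevant digraph as a robust outexpander of linear minimum degree.

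The main construction at stage $s$ is as follows. Start with the path sequence $PS_s\in\mathcal{BE}$, which by (BE1) is locally balanced with respect to $C$, by (BE2) is a $V_{i_s}$-extension of $M_s$, and by (BE3) meets each cluster in at most $\eps_0 m$ vertices. The idea is to extend $PS_s$ into a single $1$-factor $F_s$ of $G_s+PS_s$ all of whose edges \emph{other than those of $PS_s$} wind around $C$: because $PS_s$ is locally balanced, for each edge $UW$ of $C$ the number of `free' vertices of $U$ (those of out-degree $0$ in $PS_s$) equals the number of `free' vertices of $W$ (those of in-degree $0$ in $PS_s$), up to the bookkeeping from the endpoints of the paths of $PS_s$; a perfect-matching argument between consecutive clusters — using Lemma~\ref{regularsub} applied to the appropriate bipartite subgraph of $G_s[U,W]$ restricted to the free vertices, which is still almost complete since only $O(\eps_0 m)+q$ vertices/edges have been removed — lets us join up the free vertices so that, together with $PS_s$, we obtain a $1$-factor (a disjoint union of directed cycles), one of which is long and contains $PS_s$. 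Actually it is cleaner to first extend each directed path of $PS_s$ by one winding edge at each end into one longer path and then close everything up cluster by cluster; the point is that the resulting digraph $F_s$ is spanning, $1$-regular, contains $M_s$ in the right order (since we only appended winding edges to the \emph{ends} of the paths of $PS_s$, and each edge of $M_s$ lies on a distinct path of $PS_s$ ending in $V_{i_s}$, so the cyclic order of the $f_j$'s is preserved), and all of $F_s-PS_s$ winds around $C$ and lies in $G_s$.

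Finally I turn the $1$-factor $F_s$ into a single Hamilton cycle $C_s$ consistent with $M_s$. Here is where Theorem~\ref{expanderthm} enters: set aside, once and for all at the very beginning (before any deletions), a sparse spanning sub-digraph $G^{\circ}$ of $G$ winding around $C$ — say taking out $\eps'' m$ winding edges between each consecutive pair of clusters, where $\eps_0\ll\eps''\ll\rho$ — reserved purely for the `merging' step, so that $G_s$ above really means $(G-G^{\circ})-G_{\mathrm{used}}$. One checks that the `reduced' digraph obtained from $G^{\circ}$ by contracting the cycles of $F_s$ to single vertices (only $\le m$ of them, lying in $\le k$ clusters) is a robust $(\nu,\tau)$-outexpander of linear minimum degree: contracting along winding edges preserves the blown-up-cycle structure, and $G^{\circ}$ between consecutive clusters is superregular by Lemma~\ref{regtoexpander}, which yields the required expansion. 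Applying Theorem~\ref{expanderthm} to this reduced digraph — with the $\le\ell m/k$ `prescribed' vertices being the contracted cycle through $PS_s$ (so that $PS_s$, and hence $M_s$ in order, is traversed correctly) — produces a Hamilton cycle of the reduced digraph, which lifts to a Hamilton cycle $C_s$ of $G$ containing $F_s\supseteq PS_s$ and consistent with $M_s$, with $C_s-PS_s\subseteq (G-G^{\circ})_s\,\cup\,G^{\circ}$ using only fresh edges. Delete $C_s-PS_s$ from the available edges and repeat; since $q\le(1-\mu-\rho)m$ and we only ever remove $O(1)$ winding edges per cluster pair per stage from an almost-complete structure, the degree/superregularity hypotheses survive all $q$ stages, which is what makes the greedy iteration go through.

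\medskip
\noindent\textbf{Main obstacle.} The delicate point is the uniformity over all $q$ stages: one must verify that after deleting $C_1-PS_1,\dots,C_{s-1}-PS_{s-1}$ the graph $G_s$ between consecutive clusters still has min-degree at least (essentially) $\rho m/2$ and that the reserved graph $G^{\circ}$ is \emph{never} touched, so that Lemma~\ref{regtoexpander} and Theorem~\ref{expanderthm} apply with the same parameters at every stage; this forces the bound $q\le(1-\mu-\rho)m$ and a careful choice of the hierarchy $\eps_0\ll\eps''\ll 1/\ell,\rho$. A secondary subtlety is guaranteeing that appending winding edges to the ends of the paths of $PS_s$ genuinely preserves the \emph{order} $f_1,\dots,f_{\ell}$ of $M_s$ — this is exactly the observation recorded after the definition of a balanced extension (a $V_i$-extension whose cycle visits the path-endpoints $x_1,\dots,x_\ell$ in order is automatically consistent with $M$), so one just needs the merging step to visit those endpoints in the prescribed cyclic order, which can be arranged since they are among the prescribed vertices fed to Theorem~\ref{expanderthm}.
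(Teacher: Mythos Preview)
Your overall architecture matches the paper's: set aside a sparse superregular subdigraph (your $G^\circ$, the paper's $H$ from Lemma~\ref{slicelemma}), extend each $PS_s$ to a directed $1$-factor $F_s$ using winding edges of $G-H$, then merge the cycles of each $F_s$ into a Hamilton cycle consistent with $M_s$ using edges of $H$ and robust expansion. The paper does exactly this via Lemmas~\ref{slicelemma}, \ref{extend1-factor} and~\ref{mergecycles2}.

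The genuine gap is in your greedy execution of the $1$-factor step. You assert that at stage $s$ the bipartite graph $G_s[U,W]$ between consecutive clusters, restricted to the free vertices, is ``still almost complete'' and that Lemma~\ref{regularsub} applies. This is false for late stages. Each previous $C_{s'}-PS_{s'}$ removes essentially a full perfect matching from $G[U,W]$ (not ``$O(1)$ winding edges per cluster pair per stage'' as you write at the end), so after $s-1$ removals the degrees in $G_s[U,W]$ are $(1-\mu)m-(s-1)\pm O(\eps' m+\ell m/k)$. For $s$ close to $q\le(1-\mu-\rho)m$ this is only about $\rho m$. Lemma~\ref{regularsub} requires $\mu\le 1/4$, so it cannot be invoked once the effective density has dropped to $\rho$; Hall's condition $\delta\ge m/2$ also fails; and near-regularity alone (degrees $\rho m\pm c$ with $c\ll\rho m$) does not directly force a perfect matching, since the double-counting bound only yields $|N(S)|\ge|S|-O(cm/\rho m)$, a defect that need not vanish.

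The paper sidesteps this by finding all $q$ $1$-factors \emph{in a single batch} rather than one at a time (Lemma~\ref{extend1-factor}). For each pair $(V_i,V_{i+1})$ it first greedily handles the at most $s_0\le\ell m/k$ path sequences that actually meet $V_i$ (here the graph is still near-complete, so a single perfect matching is trivially available), and then applies Lemma~\ref{regularsub} \emph{once} to what remains of $G'[V_i,V_{i+1}]$---which still has degrees $(1-\mu\pm\sqrt{\eps})m$---to extract $(1-\mu-\rho')m\ge q-s_0$ edge-disjoint perfect matchings simultaneously. This batching is precisely what makes the hypothesis $q\le(1-\mu-\rho)m$ sufficient, and it is the missing idea in your argument.
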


Lemma~\ref{merging} will be used both in the two cliques case (i.e.~to prove Lemma~\ref{almostthm}) and in the bipartite case (i.e.~to prove Lemma~\ref{almostthmbip}).

We now give an outline of the proof of Lemma~\ref{merging}, where for simplicity we assume that
the path sequences in the balanced extension~$\mathcal{BE}$ are edge-disjoint from each other.%
	\COMMENT{Deryk: reworded}
Our first step is to remove a sparse subdigraph $H$ from $G$ (see Lemma~\ref{slicelemma}), and set $G':= G - H$.
Next we extend each path sequence in~$\mathcal{BE}$ into a (directed) $1$-factor using edges of~$G'$
such that all these $1$-factors are edge-disjoint from each other (see Lemma~\ref{extend1-factor}). 
Finally, we use edges of $H$ to transform the $1$-factors into Hamilton cycles (see Lemma~\ref{mergecycles2}).

The following lemma enables us to find a suitable sparse subdigraph $H$ of $G$.
Recall that $(\eps, d, d^*, c)$-superregularity was defined in Section~\ref{sec:reg}.%
   \COMMENT{Daniela: statement and proof of Lemma~\ref{slicelemma} are slightly different now, since there
was a problem with the application of this lemma in the proof of Lemma~\ref{merging} - CHECK Lemma~\ref{slicelemma} and
the proof of Lemma~\ref{merging}!}

\begin{lemma} \label{slicelemma}
Suppose that $0<1/m\ll \eps' \ll  \gamma \ll \eps\ll 1$ and $0 \le \mu \ll \eps$.
Let $G$ be a bipartite graph with vertex classes $U$ and $W$ of size $m$ such that 
$d(v) = ( 1- \mu \pm \eps')m$ for all $v \in V(G)$.
Then there is a spanning subgraph $H$ of $G$ which satisfies the following properties: 
\begin{itemize}
\item[{\rm (i)}]  $H$ is $(\eps, 2\gamma , \gamma , 3 \gamma )$-superregular.
\item[{\rm (ii)}] Let $G':=G-H$. Then $d_{G'} (v) = (1 - \mu   \pm 4\gamma)m$ for all $v \in V(G)$.
\end{itemize}
\end{lemma}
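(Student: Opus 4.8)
The plan is to construct $H$ by a random sparsification of $G$. First I would include each edge of $G$ independently with probability $p := 2\gamma/(1-\mu)$ (roughly; more precisely choose $p$ so that the expected degree of each vertex in $H$ is close to $2\gamma m$, which is possible since all degrees of $G$ are $(1-\mu\pm\eps')m$). Call the resulting random spanning subgraph $H$ and set $G' := G - H$.

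The bulk of the proof is then a routine application of the Chernoff bound (Proposition~\ref{chernoff}) together with a union bound to verify that, with high probability, $H$ satisfies (Reg1)--(Reg4) with the stated parameters and that $G'$ satisfies (ii). Concretely:
\begin{itemize}
\item[(a)] For each vertex $v$, $d_H(v)$ is a binomial random variable with mean $p \cdot d_G(v) = (2\gamma \pm 3\gamma\eps'/(1-\mu))m = 2\gamma m \pm \gamma \eps' m/2$ (say). By Proposition~\ref{chernoff} applied with $a$ of order $\sqrt{\log m / (\gamma m)}$, we get $d_H(v) = 2\gamma m(1 \pm o(1))$ with probability $1 - o(1/m)$; in particular $d_H(v) \le 3\gamma m$, giving (Reg3), and $d_H(v) \ge \gamma m$, giving (Reg4) (with room to spare, since $2\gamma(1-o(1)) > \gamma$). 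A union bound over the $2m$ vertices handles all of them simultaneously. Correspondingly $d_{G'}(v) = d_G(v) - d_H(v) = (1-\mu\pm\eps')m - (2\gamma m \pm \gamma\eps' m/2)$, which lies in $(1-\mu\pm 4\gamma)m$ since $\eps' \ll \gamma$; this is (ii).
\item[(b)] For (Reg1): fix $A \subseteq U$, $B \subseteq W$ with $|A|, |B| \ge \eps m$. Then $e_H(A,B)$ is binomial with mean $p \cdot e_G(A,B)$. Since $G$ has minimum degree $(1-\mu-\eps')m \ge (1-\eps)m$, a standard counting argument shows $e_G(A,B) = (1\pm\eps/2)|A||B|$ when $|A|,|B|\ge\eps m$ (here one uses that few vertices of $A$ can have many non-neighbours in $B$). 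Hence $\mathbb{E}\,e_H(A,B) = (2\gamma \pm \gamma\eps)|A||B|$, and Proposition~\ref{chernoff} (with $a$ a small constant, using $\mathbb{E}\,e_H(A,B) \ge \gamma\eps^2 m^2 \gg \log(4^{2m})$) gives $e_H(A,B) = (1\pm\eps)\cdot 2\gamma |A||B|$ with probability $1 - e^{-\Omega(\gamma\eps^2 m^2)}$, which beats the $4^{2m}$ choices of $(A,B)$ in the union bound. Thus $d_H(A,B) = (1\pm\eps)\cdot 2\gamma$, which is (Reg1) with $d = 2\gamma$.
\item[(c)] For (Reg2): fix $u, u' \in V(G)$ lying in the same vertex class, say $U$. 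Their common neighbourhood in $G$ has size at most $m$, and $|N_H(u) \cap N_H(u')|$ is dominated by a binomial with mean $\le p^2 m = 4\gamma^2 m/(1-\mu)^2 \le 5\gamma^2 m$. Since $(3\gamma)^2 m = 9\gamma^2 m$, Proposition~\ref{chernoff} with a constant $a$ gives $|N_H(u)\cap N_H(u')| \le 9\gamma^2 m = (3\gamma)^2 m$ with probability $1 - e^{-\Omega(\gamma^2 m)}$; a union bound over the $O(m^2)$ pairs finishes this. This is (Reg2) with $c = 3\gamma$.
\end{itemize}
Taking a final union bound over (a)--(c), with probability at least $1/2$ all properties hold simultaneously, so a suitable $H$ exists. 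I would present these estimates compactly, since each is a one-line Chernoff computation of exactly the type already used in the paper.

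The main obstacle — such as it is — is purely bookkeeping: making sure the parameters line up, i.e.\ that the density is $2\gamma$ (not $\gamma$), that minimum degree $\gamma m$ is safely below the expected degree $2\gamma m$ and maximum degree $3\gamma m$ safely above, and that the failure probabilities in (b) (which are only $e^{-\Omega(\gamma\eps^2 m^2)}$, hence tiny) and in (a), (c) (only $e^{-\Omega(\gamma m)}$ or $e^{-\Omega(\gamma^2 m)}$, hence still $\ll 1/m^2$) all survive their respective union bounds. The hierarchy $1/m \ll \eps' \ll \gamma \ll \eps \ll 1$ and $\mu \ll \eps$ is exactly what is needed for all of this to go through, and there is no genuinely hard combinatorial content here.
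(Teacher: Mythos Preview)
Your proposal is correct and follows essentially the same approach as the paper: random sparsification followed by Chernoff bounds to verify (Reg1)--(Reg4) and the degree condition on $G'$. The only cosmetic difference is that the paper simply takes $p=2\gamma$ rather than $2\gamma/(1-\mu)$ (which is fine since $\mu\ll\eps$), and the paper's write-up is terser, but the content is the same.
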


\begin{proof}
Note that $\delta(G) \ge (1- \mu - \eps') m \ge (1- \eps^3) m$ as $\eps', \mu \ll \eps \ll 1$. 
Thus, whenever $A \subseteq U$ and $B \subseteq W$ are sets of size at least $\eps m,$\COMMENT{B\'ela: added comma} then
\begin{align}\label{exp1a}
	e_G(A,B) \ge (|B| - \eps^3 m)|A| \ge (1 - \eps^2)|A||B|.
\end{align}
Let $H$ be a random subgraph of $G$ which is obtained by including each edge of $G$ with probability $2\gamma $.
\eqref{exp1a} implies that whenever $A \subseteq U$ and $B \subseteq W$ are sets of size at least $\eps m$ then
\begin{align}\label{exp1}
2\gamma (1- \eps^2)|A||B| \leq \mathbb E (e_{H} (A,B)) \leq 2\gamma |A||B|.
\end{align}
Further, for all $u,u' \in V(H)$,
\begin{align}\label{exp2}
 \mathbb E (|N_{H} (u) \cap N_{H} (u')| ) \leq 4 \gamma ^2 m
\end{align}
and 
\begin{align}\label{exp3}
3\gamma m/2 \leq \mathbb E (\delta (H)),  \mathbb E (\Delta (H) )\leq 2 \gamma m .
\end{align}
Thus, \eqref{exp1}--\eqref{exp3} together with Proposition~\ref{chernoff} imply that, with high probability, $H$
is an $(\eps, 2\gamma , \gamma , 3 \gamma )$-superregular pair.
Since $\Delta (H) \leq 3 \gamma m$ by~(Reg3) and $\eps'\ll \gamma$, $G'$ satisfies~(ii).
\end{proof}

\subsection{Transforming a balanced extension into $1$-factors} \label{sec:extend}

The next lemma will be used to extend each locally balanced path sequence $PS$ belonging to a balanced extension $\mathcal{BE}$ into a
(directed) $1$-factor using edges from $G'$.
We will select the edges from $G'$ in such a way that (apart from the path sequences)%
   \COMMENT{Deryk: added brackets}
the $1$-factors obtained are edge-disjoint.

\begin{lemma}\label{extend1-factor}
Suppose that $0<1/m \ll  1/k \le \eps \ll\rho , 1/\ell \leq 1$, that $\rho \ll 1$,
that $0 \le \mu \le 1/4$ and that $q, m,k,\ell \in \mathbb{N}$
with $q \le (1- \mu - \rho) m$.
Let $(G,\mathcal{Q}, C)$ be a $(k,m, \mu, \eps)$-cyclic system, where $C= V_1 \dots V_k$.
Suppose that there exists a set $\mathcal{PS}$ of $q$ path sequences $PS_1,\dots, PS_q$ satisfying the following conditions:
\COMMENT{Allan: had $q$ edge-disjoint path sequences, also changed the last sentence in the statement.}
\begin{itemize}
	\item[\rm (i)] Each $PS_s \in \mathcal{PS}$ is  locally balanced with respect to $C$.
	\item[\rm (ii)] $|V(PS_s) \cap V_i |\le \eps m$ for all $i \le k $ and $s \le q$.
Moreover, for each $i \le k$, there are at most $\ell m/k$ $PS_s$ such that $V(PS_s) \cap V_i \ne \emptyset$.
\end{itemize}
Then there exist $q$ directed $1$-factors $F_1, \dots, F_q$ in $G+ \mathcal{PS}$ such that for all $s \le q$
$PS_s \subseteq F_s$ and $F_1 - PS_1, \dots, F_q - PS_q$ are edge-disjoint subgraphs of $G$.
\end{lemma}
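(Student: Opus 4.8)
The plan is to handle each edge $V_iV_{i+1}$ of $C$ separately. Since $(G,\mathcal Q,C)$ is a cyclic system, $G$ winds around $C$, so $E(G)$ is the disjoint union of the edge sets $E(G[V_i,V_{i+1}])$ over $i\le k$. For each $i\le k$ and $s\le q$ let $O^i_s\subseteq V_i$ be the set of vertices of $V_i$ that are not the initial vertex of an edge of $PS_s$, and let $I^{i+1}_s\subseteq V_{i+1}$ be the set of vertices of $V_{i+1}$ that are not the final vertex of an edge of $PS_s$. In the $1$-factor $F_s$ we want to build, a vertex $v\in O^i_s$ must receive a new out-edge, and since $G$ winds around $C$ this edge goes to $V_{i+1}$ and must land in $I^{i+1}_s$; symmetrically every vertex of $I^{i+1}_s$ must receive a new in-edge from $O^i_s$. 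So the goal is: for each $i$, find directed perfect matchings $M^i_1,\dots,M^i_q$ from $O^i_s$ to $I^{i+1}_s$ in $G[V_i,V_{i+1}]$ (oriented from $V_i$ to $V_{i+1}$) that are pairwise edge-disjoint, and then set $F_s:=PS_s+\sum_{i\le k}M^i_s$. A direct check of in- and out-degrees at every vertex then shows that each $F_s$ is a directed $1$-factor of $G+PS_s$ containing $PS_s$, that $F_s-PS_s=\sum_i M^i_s\subseteq G$ (no $M^i_s$-edge lies in $PS_s$, as its initial vertex has $PS_s$-outdegree $0$), and that $F_1-PS_1,\dots,F_q-PS_q$ are edge-disjoint (edge-disjointness across different $i$ being automatic, again because $G$ winds around $C$).

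For such a matching $M^i_s$ to exist we need $|O^i_s|=|I^{i+1}_s|$: here $m-|O^i_s|$ is the number of edges of $PS_s$ with initial vertex in $V_i$ and $m-|I^{i+1}_s|$ the number with final vertex in $V_{i+1}$, so the equality is precisely the local balance of $PS_s$ from hypothesis~(i). By hypothesis~(ii), $m-|O^i_s|\le|V(PS_s)\cap V_i|\le\eps m$, so $O^i_s$ and $I^{i+1}_s$ cover all but at most $\eps m$ vertices of $V_i$ and $V_{i+1}$ respectively. Call $s$ \emph{$i$-deficient} if $O^i_s\ne V_i$; an $i$-deficient $s$ then has an edge of $PS_s$ with initial vertex in $V_i$, hence $V(PS_s)\cap V_i\ne\emptyset$, so by the `moreover' part of~(ii) there are at most $\ell m/k$ values of $s$ that are $i$-deficient, and $\ell m/k\ll\rho m$ by the hierarchy.

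Now fix $i$ and write $\Gamma:=G[V_i,V_{i+1}]$, a bipartite graph with both classes of size $m$ and all degrees $(1-\mu\pm\eps)m$ by~(Sys2). First I deal with the $i$-deficient $s$ greedily, one at a time: at each step, $\Gamma$ minus the (at most $\ell m/k$) matchings removed so far has minimum degree at least $(1-\mu-\eps)m-\ell m/k$, and since $\mu\le 1/4$ this stays above $m/2$; discarding the at most $\eps m$ vertices outside $I^{i+1}_s$ (resp.\ $O^i_s$), the bipartite graph between $O^i_s$ and $I^{i+1}_s$ thus has minimum degree exceeding $\tfrac12|O^i_s|=\tfrac12|I^{i+1}_s|$, so it has a perfect matching, which we take as $M^i_s$. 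Let $\Gamma^\circ$ be $\Gamma$ with all these $i$-deficient matchings removed; its degrees lie in $[(1-\mu-\eps-\ell/k)m,(1-\mu+\eps)m]$, so writing $\eps_L:=\eps+\ell/k\ll\rho$ we may apply Lemma~\ref{regularsub} to $\Gamma^\circ$ with parameters $\mu$, $\eps_L$ and $\rho/2$ in place of $\mu$, $\eps$ and $\rho$, obtaining at least $(1-\mu-\rho/2)m\ge q$ edge-disjoint perfect matchings of $\Gamma^\circ$. We assign a distinct one of these to each $s$ that is not $i$-deficient (for such $s$ we have $O^i_s=V_i$ and $I^{i+1}_s=V_{i+1}$, so a full perfect matching of $\Gamma$ is exactly what is needed). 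Repeating this for every $i\le k$ produces all the $M^i_s$, and hence the $F_s$.

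The main obstacle is this last step for the non-deficient indices: one cannot extract $q\approx(1-\mu)m$ edge-disjoint perfect matchings from $\Gamma$ greedily, since after removing most of them the remainder is far too sparse to contain another perfect matching — this is exactly why Lemma~\ref{regularsub} is invoked, producing all of them at once out of a regular spanning subgraph. The delicate point is the bookkeeping of constants: the deletions caused by the $i$-deficient $s$ must remain negligible compared with $\rho$, both to keep the minimum degree above $\tfrac12|O^i_s|$ in the greedy Hall step (which is where the bound $\mu\le 1/4$ is used) and to keep the degree window of $\Gamma^\circ$ narrow enough to apply Lemma~\ref{regularsub} with the output regularity still as large as $(1-\mu-\rho/2)m$; controlling this is precisely the purpose of the `moreover' part of hypothesis~(ii).
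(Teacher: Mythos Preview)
Your proof is correct and follows essentially the same approach as the paper: reduce to finding, for each $i$, edge-disjoint perfect matchings $M^i_s$ in $G[V_i\setminus V_i^{s,+},V_{i+1}\setminus V_{i+1}^{s,-}]$, handle the (at most $\ell m/k$) ``deficient'' indices greedily via a Hall/minimum-degree argument, and then invoke Lemma~\ref{regularsub} on the remainder to extract the bulk of the perfect matchings simultaneously. The only cosmetic differences are that the paper sets $\rho':=\rho+s_0/m$ and applies Lemma~\ref{regularsub} with $(\sqrt{\eps},\rho')$ rather than your $(\eps_L,\rho/2)$, and it explicitly notes at the outset that one may assume $\rho m,\mu m\in\mathbb{N}$ (needed for the hypotheses of Lemma~\ref{regularsub}).
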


\proof
By changing the values of $\rho$, $\mu$ and $\eps$ slightly,%
   \COMMENT{We do need to make $\eps$ slightly bigger for this as well (in order to ensure that $(G,\mathcal{Q}, C)$
is still a $(k,m, \mu, \eps)$-cyclic system)}
we may assume that $\rho m, \mu m \in \mathbb{N}$.
For each $s \le q$ and each $i \le k$, let $V_i^{s,-}$ (or $V_i^{s,+}$) be the set of vertices in $V_i$ with indegree (or outdegree) one in $PS_s$.
Since each $PS_s$ is locally balanced with respect to $C$, $|V_i^{s,+}| = |V_{i+1}^{s,-}| \le \eps m $ for all $s \le q$ and all $i \le k$
(where the inequality follows from~(ii)).
To prove the lemma, it suffices to show that for each $i \le k$, there exist edge-disjoint directed matchings $M^1_i, \dots, M^q_i$,
so that each $M^s_i$ is a perfect matching in $G[V_i \setminus V_i^{s,+},V_{i+1} \setminus V_{i+1}^{s,-}]$.
The lemma then follows by setting $F_s := PS_s + \sum_{i \le k} M^s_i$ for each $s \le q$.

Fix any $i \le k$.
Without loss of generality (by relabelling the $PS_s$ if necessary) we may assume that there exists an integer $s_0$ such that
$V_i^{s,+} \ne \emptyset$ for all $s \le s_0$ and $V_i^{s,+} = \emptyset $ for all $s_0 < s \le q$.
By~(ii), $s_0 \le \ell m/k$.
Suppose that for some $s$ with $1 \le s \le s_0$ we have already found 
our desired matchings $M^1_i, \dots, M^{s-1}_i$ in $G[V_i,V_{i+1}]$. 
Let 
$$
V'_i : = V_i \setminus V_i^{s,+}, \ \ \ V'_{i+1} : = V_{i+1} \setminus V_{i+1}^{s,-} \ \ \ \mbox{and} \ \ \
G_s := G[V_i',V_{i+1}'] - \sum_{s' < s} M^{s'}_{i}.
$$
Note that each $v \in V_i'$ satisfies
\begin{align} \nonumber
	d^+_{G_s}(v) \ge d^+_{G}(v,V_{i+1}) - ( |V_{i+1}^{s,-}|+s_0) 
 \ge (1- \mu - (2\eps + \ell/k) ) m 
\ge (1- \mu - \sqrt{\eps}) m  
\end{align}
by~(Sys2) and the fact that $ 1/k \le \eps \ll 1/{\ell}$.
Similarly, each $v \in V_{i+1}'$ satisfies $d^-_{G_s}(v) \ge (1- \mu - \sqrt{\eps} ) m$.
Thus $G_s$ contains a perfect matching $M_i^{s}$ (this follows, for example, from Hall's theorem).
So we can find edge-disjoint matchings $M^1_i, \dots, M^{s_0}_i$ in $G[V_i,V_{i+1}]$.

Let $G'$ be the subdigraph of $G[V_i,V_{i+1}]$ obtained by removing all the edges in $M^{1}_{i}, \dots, M^{s_0}_{i}$.
Since $V_i^{s,+} = \emptyset $ for all $s_0 < s \le q$ (and thus also $V_{i+1}^{s,-} = \emptyset$ for all such~$s$),%
	\COMMENT{Allan: had $V_{i}^{s,-}$ before.}
in order to prove the lemma it suffices to find $q-s_0$ edge-disjoint perfect matchings in $G'$.
Each $v \in V_i$ satisfies 
\begin{align*}
d^+_{G'}(v) = d^+_{G}(v,V_{i+1}) \pm  s_0 = d^+_{G} (v,V_{i+1}) \pm \ell m/k = (1- \mu \pm \sqrt{\eps}) m 
\end{align*}
by~(Sys2) and the fact that  $ 1/k \le \eps \ll 1/{\ell}$.
Similarly, each $v \in V_{i+1}$ satisfies $d^-_{G'}(v) = (1- \mu \pm \sqrt{\eps}) m$.
Set $\rho' : = \rho + s_0/m$. 
Note that $\rho' m \in \mathbb{N}$ and $\rho \le \rho' \le \rho + \ell/k \le 2\rho$ as $1/k \ll 1/\ell, \rho$.
Hence, $\eps \ll \rho' \ll 1$. Thus we can apply Lemma~\ref{regularsub} with $G', \rho', \sqrt{\eps}$
playing the roles of $\Gamma,\rho,\eps$ to obtain $(1-\mu - \rho') m $ edge-disjoint perfect matchings in~$G'$.
Since $(1-\mu - \rho') m  =  (1-\mu - \rho) m - s_0 \ge  q- s_0$, there exists $q-s_0$ edge-disjoint
perfect matchings $M_i^{s_0+1}, \dots, M_i^{q}$ in $G'$. This completes the proof of the lemma.
\endproof

\subsection{Merging cycles to obtain Hamilton cycles} \label{sec:merge}

Recall that we have removed a sparse subdigraph $H$ from $G$ and that $G'=G-H$.
Our final step in the proof of Lemma~\ref{merging} is to merge the cycles from each of the $1$-factors $F_s$ returned by
Lemma~\ref{extend1-factor} to obtain edge-disjoint (directed) Hamilton cycles. 
We will apply Lemma~\ref{mergecycles} to merge the cycles of each $F_s$, using the edges in $H$.
However, the Hamilton cycles obtained in this way might not be consistent with the matching $M_s \in \mathcal{M}$
that lies in $PS_s$. Lemma~\ref{ordercycle} is designed to deal with this issue.

Lemma~\ref{mergecycles} was proved in~\cite{Kelly} and was first used to construct approximate Hamilton decompositions in~\cite{OS}.
Roughly speaking, it asserts the following:
suppose that we have a $1$-factor $F$ where most of the edges wind around a cycle $C=V_1\dots V_k$.
Suppose also that we have a digraph $H$ which winds around~$C$. (More precisely, $H$ is the
union of superregular pairs $H[V_i,V_{i+1}]$.)
Then we can transform $F$ into a Hamilton cycle $C'$ by using a few edges of $H$.
The crucial point is that when applying this lemma, the edges in $C'-F$ can be taken from a small number of the
superregular pairs $H[V_i,V_{i+1}]$ (i.e.~the set $J$ in Lemma~\ref{mergecycles} will be very small compared to~$k$).
In this way, we can transform many $1$-factors $F$ into edge-disjoint Hamilton cycles without using any of the pairs $H[V_i,V_{i+1}]$ too often.
This in turn means that we will be able to transform all of our $1$-factors into edge-disjoint Hamilton cycles
by using the edges of a single sparse graph~$H$.

\begin{lemma}\label{mergecycles}
Suppose that $0<1/m\ll d'\ll \eps\ll d\ll \zeta ,1/t\le 1/2$.
Let $V_1,\dots,V_k$ be pairwise disjoint clusters, each of size $m$, and let $C=V_1\dots V_k$ be a directed cycle on these clusters.
Let $H$ be a digraph on $V_1\cup \dots\cup V_k$ and let $J\subseteq E(C)$. For each edge $V_iV_{i+1}\in J$, let $V^1_i\subseteq V_i$
and $V^2_{i+1}\subseteq V_{i+1}$ be such that $|V^1_i|=|V^2_{i+1}|\ge m/100$ and 
 such that $H[V^1_i,V^2_{i+1}]$ is $(\eps,d',\zeta d',td'/d)$-superregular.
Suppose that $F$ is a $1$-regular digraph with $V_1\cup \dots \cup V_k\subseteq V(F)$ such that the following properties hold:
\begin{itemize}
\item[\rm{(i)}]For each edge $V_iV_{i+1}\in J$ the digraph $F[V^1_i,V^2_{i+1}]$ is a perfect matching.
\item[\rm{(ii)}] For each cycle $D$ in $F$ there is some edge $V_iV_{i+1}\in J$ such that $D$ contains a vertex
in $V^1_i$.
\item[\rm{(iii)}] Whenever $V_iV_{i+1}, V_jV_{j+1}\in J$ are such that $J$ avoids all edges in the segment $V_{i+1}CV_j$ of
$C$ from $V_{i+1}$ to $V_j$, then $F$ contains a path $P_{ij}$ joining some vertex $u_{i+1}\in V^2_{i+1}$ to some
vertex $u'_j\in V^1_j$ such that $P_{ij}$ winds around~$C$.
\end{itemize}
Then we can obtain a directed%
    \COMMENT{Daniela: added directed}
cycle on $V(F)$ from $F$ by replacing $F[V^1_i,V^2_{i+1}]$ with a suitable perfect matching
in $H[V^1_i,V^2_{i+1}]$ for each edge $V_iV_{i+1}\in J$.
\end{lemma}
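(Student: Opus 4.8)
The plan is to delete from $F$ the matchings $M_a:=F[V^1_{i_a},V^2_{i_a+1}]$, where $J=\{V_{i_1}V_{i_1+1},\dots,V_{i_r}V_{i_r+1}\}$ (say $i_1<\dots<i_r$, read cyclically), and then to put back perfect matchings $M'_a\subseteq H[V^1_{i_a},V^2_{i_a+1}]$, one slot at a time, so that the resulting $1$-factor becomes connected. By~(i), each $M_a$ is a perfect matching and a subgraph of $F$; by~(i) and~(ii), every cycle of $F$ contains an edge of some $M_a$, so $\mathcal{Q}:=F-(M_1+\dots+M_r)$ is a union of vertex-disjoint directed paths spanning $V(F)$, each running from a vertex of some $V^2_{i_b+1}$ to a vertex of some $V^1_{i_a}$. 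Since $\mathcal{Q}+M'_1+\dots+M'_r$ is a $1$-factor on $V(F)$ whatever perfect matchings $M'_a\subseteq H[V^1_{i_a},V^2_{i_a+1}]$ we choose, it suffices to choose these so that the result is a single cycle.

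I would start with arbitrary perfect matchings $M'_a\subseteq H[V^1_{i_a},V^2_{i_a+1}]$ and then process the slots in the cyclic order $b=1,\dots,r$, re-choosing $M'_b$ at step $b$ (keeping the other $M'_a$ fixed) so as to maintain the invariant: after step $b$ there is a unique cycle $\widehat D_b$ of the current $1$-factor meeting any of the slots $1,\dots,b$, and it contains all the ``backbone'' paths $P_1,\dots,P_b$ provided by~(iii) (where $P_c$ joins $V^2_{i_c+1}$ to $V^1_{i_{c+1}}$). Changing $M'_b$ alone only rearranges the cycles through slot $b$ among themselves; and by~(iii) the cycle $\widehat D_{b-1}$ contains $P_{b-1}$, which ends in $V^1_{i_b}$ and hence meets slot $b$. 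So at step $b$ it is enough to re-choose $M'_b$ so that all cycles currently through slot $b$ (which include $\widehat D_{b-1}$) merge into one, automatically also picking up $P_b$ (whose first vertex lies in $V^2_{i_b+1}$). After all $r$ steps, $\widehat D_r$ is the unique cycle through any slot and contains every $P_a$; since every cycle of a $1$-factor of this form uses at least one matching edge and hence meets a slot, $\mathcal{Q}+\sum_aM'_a=\widehat D_r$ is a Hamilton cycle.

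The heart of the proof is the single merging step. Contracting $\mathcal{Q}$ and the fixed matchings, the cycles of the current $1$-factor passing through slot $b$ correspond exactly to the cycle structure of a permutation $\rho=h\circ\pi_b$ of $V^1_{i_b}$, where $h\colon V^2_{i_b+1}\to V^1_{i_b}$ is a fixed bijection (built from $\mathcal{Q}$ and the fixed $M'_a$, $a\ne b$, by following ``first return to $V^1_{i_b}$'') and $\pi_b$ ranges over all bijections $V^1_{i_b}\to V^2_{i_b+1}$ realized by a perfect matching of $H[V^1_{i_b},V^2_{i_b+1}]$. So I would form the auxiliary digraph $\vec G$ on $V^1_{i_b}$ with $x\to z$ whenever $h^{-1}(z)\in N_H(x)$; a Hamilton cycle of $\vec G$ is precisely a choice of $M'_b$ making $\rho$ a single cycle. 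Since $|V^1_{i_b}|\ge m/100$ and $H[V^1_{i_b},V^2_{i_b+1}]$ is $(\eps,d',\zeta d',td'/d)$-superregular, $\vec G$ has minimum in- and outdegree at least $\zeta d'|V^1_{i_b}|$, a positive linear fraction (as $d'$ is a constant), and Lemma~\ref{regtoexpander} shows that $\vec G$ is a robust $(\nu,\tau)$-outexpander; Theorem~\ref{expanderthm} then supplies the required Hamilton cycle.

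The main obstacle is exactly this merging step: a sparse superregular pair ($d'\ll 1$) need not admit a simple two-edge switch between two prescribed cycles, so one cannot hope to merge cycles one crude switch at a time. Recasting the step as a Hamilton-cycle problem in the outexpander $\vec G$ and invoking Theorem~\ref{expanderthm} gets around this. The remaining points should be routine: verifying from~(i) and~(ii) that $\mathcal{Q}$ is genuinely a path system with the stated endpoints; checking that the first-return bijection $h$ and the correspondence between cycles-through-slot-$b$ and cycles of $\rho$ are as claimed; and checking that~(iii) lets the invariant propagate all the way around the cycle of slots, including the degenerate case where two consecutive edges of $J$ are adjacent in $C$ (when the relevant $P_c$ may be trivial or may wind once around $C$).
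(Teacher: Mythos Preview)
The paper does not actually prove Lemma~\ref{mergecycles}; it quotes the result from~\cite{Kelly} and instead proves the closely related Lemma~\ref{ordercycle}, remarking afterwards that ``the above proof idea is actually quite similar to that for Lemma~\ref{mergecycles} itself.'' Your proposal is correct and is precisely this approach: the auxiliary digraph $\vec G$ you build at each slot is exactly the construction in the proof of Lemma~\ref{ordercycle} (there it is called $A$, defined on $V^2_{i+1}$ rather than $V^1_i$, but this is immaterial), and the appeal to Lemma~\ref{regtoexpander} followed by Theorem~\ref{expanderthm} is identical. Your inductive processing of the slots $b=1,\dots,r$, maintaining the invariant that $\widehat D_b$ is the unique cycle meeting any of the first $b$ slots, is the natural way to iterate the single-slot argument, and your verification that condition~(iii) propagates the invariant from slot to slot (via the path $P_{b-1}$ ending in $V^1_{i_b}$) is exactly what is needed. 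The one cosmetic point is that you do not need to start with ``arbitrary perfect matchings $M'_a\subseteq H$'': you can simply begin with $M'_a:=M_a$ (the original matchings from $F$), since these are only placeholders used to define the return map $h$ at each step and are all replaced by matchings in $H$ by the end.
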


\begin{lemma}\label{ordercycle}
Suppose that $0<1/m\ll \gamma \ll d' \ll \eps\ll d\ll \zeta ,1/t\le 1/2$.
Let $V_1,\dots,V_k$ be pairwise disjoint clusters, each of size $m$, and let $C=V_1\dots V_k$ be a directed cycle on these clusters.
Let $1\le i\le k$ be fixed and let $V^1_i\subseteq V_i$ and $V^2_{i+1}\subseteq V_{i+1}$ be such that $|V^1_i|=|V^2_{i+1}|\ge m/100$. Suppose that 
$H=H[V^1_i,V^2_{i+1}]$ is an $(\eps,d',\zeta d',td'/d)$-superregular bipartite digraph.
Let $X= \{x_1, \dots, x_p\} \subseteq V_{i}^1$ with $|X| \le \gamma m$.
Suppose that $C'$ is a directed cycle with $V_1\cup \dots \cup V_k\subseteq V(C')$ such that $C'[V^1_i,V^2_{i+1}]$ is a perfect matching.
Then we can obtain a directed cycle on $V(C')$ from $C'$ that visits the vertices $x_1, \dots, x_p$ in order by
replacing $C'[V^1_i,V^2_{i+1}]$ with a suitable perfect matching in $H[V_i^1,V_{i+1}^2]$.
\end{lemma}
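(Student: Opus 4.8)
The plan is to reduce the statement to finding a directed Hamilton cycle through prescribed vertices in an auxiliary digraph, and then to apply Theorem~\ref{expanderthm}. Write $m':=|V^1_i|=|V^2_{i+1}|\ge m/100$ and let $M:=C'[V^1_i,V^2_{i+1}]$, which by hypothesis is a perfect matching between $V^1_i$ and $V^2_{i+1}$. First I would delete $M$ from $C'$. Since $C'$ is a directed cycle, $C'-M$ is a disjoint union of exactly $m'$ directed paths $Q_1,\dots,Q_{m'}$, where $Q_j$ runs from some vertex $b_j\in V^2_{i+1}$ (which lost its in-edge) to some vertex $a_j\in V^1_i$ (which lost its out-edge); moreover $a_1,\dots,a_{m'}$ enumerates $V^1_i$ and $b_1,\dots,b_{m'}$ enumerates $V^2_{i+1}$. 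In particular each $x_r\in X\subseteq V^1_i$ is the \emph{final} vertex of exactly one of these paths, which we denote $Q(x_r)$; the paths $Q(x_1),\dots,Q(x_p)$ are distinct since the $x_r$ are distinct.

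Next I would form the auxiliary digraph $\mathcal{D}$ on vertex set $\{Q_1,\dots,Q_{m'}\}$ in which $Q_jQ_{j'}$ is an edge precisely when $a_jb_{j'}\in E(H)$. The key correspondence is: any directed Hamilton cycle $Q_{\pi(1)}Q_{\pi(2)}\dots Q_{\pi(m')}$ of $\mathcal{D}$ yields a perfect matching $M':=\{a_{\pi(s)}b_{\pi(s+1)}:s\le m'\}\subseteq E(H)$ (indices modulo $m'$), and $(C'-M)+M'$ is a single directed cycle on $V(C')$ (we only reconnected the path-ends $a_j$ to path-starts $b_{j'}$, along a single cyclic permutation). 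Since $x_r$ is the final vertex of $Q(x_r)$, this new cycle visits the vertices $x_1,\dots,x_p$ in order as soon as $Q(x_1),\dots,Q(x_p)$ occur on the Hamilton cycle of $\mathcal{D}$ in this cyclic order. So it suffices to find such a Hamilton cycle in $\mathcal{D}$.

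For this I would verify that $\mathcal{D}$ is a robust outexpander of linear minimum semidegree and then invoke Theorem~\ref{expanderthm}. Since $H$ is $(\eps,d',\zeta d',td'/d)$-superregular, (Reg4) gives $d_H(v)\ge \zeta d'm'$ for every $v$, whence $\delta^+(\mathcal{D}),\delta^-(\mathcal{D})\ge \zeta d'm'$. For the expansion, introduce new constants with $\gamma\ll\nu\ll\tau\ll\zeta d'$ (consistent with the given hierarchy, as $\gamma\ll d'$), fix $S\subseteq V(\mathcal{D})$ with $\tau m'\le|S|\le(1-\tau)m'$, and set $A_S:=\{a_j:Q_j\in S\}\subseteq V^1_i$, so $|A_S|=|S|$. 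A vertex $Q_{j'}$ has at least $\nu m'$ inneighbours in $S$ exactly when $b_{j'}$ has at least $\nu m'$ neighbours in $A_S$ in $H$; applying Lemma~\ref{regtoexpander} to $H$ with $A_S$ playing the role of $A$ (and the superregularity parameters matched via $d\mapsto d'$, $d/\mu\mapsto td'/d$) shows that the number of such $b_{j'}$, hence of such $Q_{j'}$, is at least $|A_S|+\nu m'=|S|+\nu m'$. Thus $\mathcal{D}$ is a robust $(\nu,\tau)$-outexpander. Since $p=|X|\le\gamma m\le 100\gamma m'$ and $\gamma\ll\nu$, Theorem~\ref{expanderthm} applied to $\mathcal{D}$ with $Q(x_1),\dots,Q(x_p)$ as the prescribed vertices yields a directed Hamilton cycle of $\mathcal{D}$ visiting them in this order, which completes the proof via the correspondence above.

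I expect the only real work to be in the third step: confirming that the (necessarily sparse, since $d'\ll\eps$) superregularity of $H$ genuinely translates into robust out-expansion of $\mathcal{D}$ with parameters compatible with Theorem~\ref{expanderthm} — that is, choosing the auxiliary constants $\nu,\tau$ between $\gamma$ and $\zeta d'$, checking the parameter conditions of Lemma~\ref{regtoexpander} (in particular $\eps\ll d/t$), and checking that $p\le\gamma m$ is small enough relative to both $\nu$ and $|V(\mathcal{D})|=m'\ge m/100$. The remaining steps are just bookkeeping about how deleting and re-inserting the matching $M$ splits $C'$ into paths and reconnects them into one cycle.
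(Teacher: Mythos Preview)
Your proposal is correct and essentially identical to the paper's proof. The paper also deletes the matching $C'[V^1_i,V^2_{i+1}]$, encodes the resulting paths as vertices of an auxiliary digraph (it phrases this via a map $f:V^1_i\to V^2_{i+1}$ sending each $u$ to the first vertex of $V^2_{i+1}$ reached by walking backwards along $C'$ from $u$, and then identifies $(u,f(u))$ into a single vertex---this is isomorphic to your $\mathcal{D}$), applies Lemma~\ref{regtoexpander} to show robust outexpansion, and invokes Theorem~\ref{expanderthm} with the prescribed vertices $f(x_1),\dots,f(x_p)$, which are exactly your $Q(x_1),\dots,Q(x_p)$.
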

\proof
Pick $\nu$ and $\tau$ such that $ \gamma \ll \nu \ll \tau\ll d'$.
For every $u \in V^1_{i}$, starting at $u$ we move along 
the cycle $C'$ (but in the opposite direction to the orientation of the edges) and let $f(u)$ be the first vertex on $C'$ in
$V^2_{i+1}$. (Note that $f(u)$ exists since $C'[V^1_i,V^2_{i+1}]$ is a perfect matching.
Moreover,
$f(u) \not = f(v)$ if $u \not = v$.)
Define an auxiliary digraph $A$ on $V^2_{i+1}$ such that $N^+_A(f(u)):=N^+_{H}(u)$.
So $A$ is obtained by identifying each pair $(u,f(u))$ into one vertex with an edge from $(u,f(u))$ to
$(v,f(v))$ if $H$ has an edge from $u$ to $f(v)$. So Lemma~\ref{regtoexpander} applied
with $d'$, $d/t$ playing the roles of $d$, $\mu$ implies
that $A$ is a robust $(\nu,\tau)$-outexpander.
Moreover, $\delta^+(A), \delta^-(A)\ge\zeta d'|V^2_{i+1}|=\zeta d'|A|$ by (Reg4).%
    \COMMENT{Actually $A$ might have loops. But after deleting them we still have a robust $(\nu,\tau)$-outexpander
with $\delta^0(A)\ge \zeta d'|A|-1$. So it's maybe better to gloss over it...}
Thus Theorem~\ref{expanderthm} implies that $A$ has a Hamilton cycle visiting $f(x_1), \dots, f(x_p)$ in order, which clearly corresponds to a perfect matching $M$ in~$H$ with the desired property.
\endproof

The above proof idea is actually quite similar to that for Lemma~\ref{mergecycles} itself.
We now apply Lemmas~\ref{mergecycles} and~\ref{ordercycle} to each $1$-factor $F_s$ given by Lemma~\ref{extend1-factor}
and obtain edge-disjoint Hamilton cycles that are consistent with the~$M_s$.

\begin{lemma}\label{mergecycles2}
Suppose that $0<1/m \ll  \eps_0, 1/k \ll \gamma  \ll \eps   \ll 1$, that $\gamma \ll 1/ \ell \le 1$ and that $q, m,k,\ell \in \mathbb{N}$.%
      \COMMENT{There is actually no condition on $q$ other than that fact that $q \le \ell m$ which is due to balanced extension.}
Let $\mathcal{Q}=\{V_1,\dots,V_k\}$ be a $(k,m)$-equipartition of a vertex set $V$ and let $C=V_1\dots V_k$ be a directed cycle.
Let $\mathcal{M} = \{M_1, \dots, M_q\}$ be a set of ordered directed matchings.
Suppose that $\mathcal{BE}=\{PS_1,\dots,PS_q\}$ is a balanced extension of $\mathcal{M}$ with  respect
to $(\mathcal{Q},C)$ and parameters $(\eps_0, \ell)$.%
    \COMMENT{Daniela: deleted "such that $M_s \subseteq PS_s$ for all $s \le q$" since it follows from (BE2)}
Furthermore, suppose that there exist $1$-regular digraphs $F_1, \dots, F_q $ on $V$ such that for each $s \le q$, $PS_s \subseteq F_s$ and such that
$F_s - PS_s$ winds around~$C$. 
Let $H$ be a digraph on $V$ which is edge-disjoint from each of $F_1 - PS_1, \dots, F_q - PS_q$%
		\COMMENT{Allan: We do not need $F_1-PS_1, \dots, F_s-PS_s$ are edge-disjoint.}
and such that $H[V_{i},V_{i+1}]$ is
$(\eps,2\gamma ,\gamma   , 3 \gamma)$-superregular for all $i\le k$. 
Then there exist $q$ Hamilton cycles $C_1, \dots, C_q$  in
$F_1+ \dots+F_q+H$ such that $C_s$ contains $PS_s$ and is consistent with $M_s$ for all $s \le q$ and such that
$C_1 - F_1, \dots, C_q - F_q$ are edge-disjoint subgraphs of $H$.%
	\COMMENT{Allan: I do mean $C_1 - F_1, \dots, C_q - F_q$ are pairwise edge-disjoint.}
\end{lemma}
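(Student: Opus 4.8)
\emph{Overall plan.} I would prove the lemma by processing the $1$-factors $F_1,\dots,F_q$ one at a time, maintaining a working copy of $H$ from which the edges used so far have been deleted. For each $F_s$ I would first use Lemma~\ref{mergecycles} to merge the cycles of $F_s$ into a single directed Hamilton cycle $C'_s$ on $V$ with $PS_s\subseteq C'_s$, using a few edges of the current copy of $H$; I would then use Lemma~\ref{ordercycle} to modify $C'_s$ into a Hamilton cycle $C_s$ which still contains $PS_s$ and in addition visits certain prescribed vertices of $V_{i_s}$ in the order forced by $M_s$ (so that $C_s$ is consistent with $M_s$), again using a few edges of the current $H$; and I would then delete the edges of $C_s-F_s$ from $H$ before passing to $F_{s+1}$. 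Here $i_s\le k$ is the index with respect to which $PS_s$ is a $V_{i_s}$-extension of $M_s$; if $x_j$ denotes the final vertex (in $V_{i_s}$) of the path of $PS_s$ containing the $j$-th edge of $M_s$, then by the remark following the definition of being consistent with $M$ it suffices for $C_s$ to contain $PS_s$ and to visit $x_1,\dots,x_{\ell_s}$ in this order, where $\ell_s:=e(M_s)$; note $\{x_1,\dots,x_{\ell_s}\}\subseteq V(PS_s)\cap V_{i_s}$, which has size at most $\eps_0 m$ by~(BE3).

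\emph{The merging step.} Put $I_s:=\{i\le k: V(PS_s)\cap V_i\ne\emptyset\}$. Since $F_s-PS_s$ winds around $C$, every vertex outside $V(PS_s)$ has its $F_s$-out-edge winding into the next cluster; hence any cycle of $F_s$ avoiding $V(PS_s)$ winds around $C$ and meets every cluster, whereas any cycle meeting $V(PS_s)$ contains a maximal $PS_s$-subpath whose final vertex $x$ lies in some $V_c$ with $c\in I_s$ and is not the tail of a $PS_s$-edge (its out-edge on the cycle winds). For $i\in I_s$ let $V^1_i\subseteq V_i$ consist of the vertices whose $F_s$-out-edge winds into $V_{i+1}$ and which are not tails of $PS_s$-edges, and let $V^2_{i+1}\subseteq V_{i+1}$ be the set of heads of these edges; then $|V^1_i|\ge(1-2\eps_0)m$ and $F_s[V^1_i,V^2_{i+1}]$ is a perfect matching. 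Set $J_s:=\{V_iV_{i+1}:i\in I_s\}$ if $I_s\ne\emptyset$ and $J_s:=\{V_{j_s}V_{j_s+1}\}$ (with $V^1_{j_s},V^2_{j_s+1}$ defined analogously) for a suitable index $j_s$ otherwise. The dichotomy above shows that every cycle of $F_s$ meets $V^1_i$ for some $V_iV_{i+1}\in J_s$, so condition~(ii) of Lemma~\ref{mergecycles} holds; condition~(i) holds by the choice of the $V^1_i,V^2_{i+1}$; and whenever two edges of $J_s$ bound a gap on $C$, the clusters in the interior of that gap avoid $V(PS_s)$, so following $F_s$ from $V^2_{i+1}$ through those clusters supplies the winding path needed for condition~(iii). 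By the bookkeeping below, no vertex has lost more than $\eps^2\gamma m$ edges of $H$ at this stage, so by Propositions~\ref{superslice5} and~\ref{superslice6} the current $H$ restricted to each $(V^1_i,V^2_{i+1})$ remains superregular with parameters adequate for Lemma~\ref{mergecycles} (with, say, $2\gamma$, $4\eps$ and $\sqrt{\eps}$ in the roles of $d'$, $\eps$ and $d$). Lemma~\ref{mergecycles} then produces $C'_s$ by replacing each $F_s[V^1_i,V^2_{i+1}]$ with a perfect matching of the current $H$ inside $(V^1_i,V^2_{i+1})$; since the $V^1_i$ avoid tails of $PS_s$-edges and the $V^2_{i+1}$ avoid heads of $PS_s$-edges, $PS_s\subseteq C'_s$.

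\emph{Reordering and bookkeeping.} As $i_s\in I_s$ (because $x_1\in V(PS_s)\cap V_{i_s}$), the merging step has not changed which cluster the out-edge of any vertex of $V_{i_s}$ points into, so each $x_j$ still has its $C'_s$-out-edge winding into $V_{i_s+1}$. Let $U^1\subseteq V_{i_s}$ be the vertices whose $C'_s$-out-edge enters $V_{i_s+1}$ and which are not tails of $PS_s$-edges, and let $U^2$ be the corresponding heads; then $\{x_1,\dots,x_{\ell_s}\}\subseteq U^1$, $|U^1|\ge(1-2\eps_0)m$, and $C'_s[U^1,U^2]$ is a perfect matching. Since $\eps_0\ll\gamma$, Propositions~\ref{superslice5} and~\ref{superslice6} again show the current $H$ restricted to $(U^1,U^2)$ is superregular with parameters suitable for Lemma~\ref{ordercycle} (with $\eps_0$ playing the role of its $\gamma$, as $|X|=\ell_s\le\eps_0 m$); applying that lemma with $X=\{x_1,\dots,x_{\ell_s}\}$ replaces $C'_s[U^1,U^2]$ by a perfect matching of $H$ inside $(U^1,U^2)$ and yields a Hamilton cycle $C_s$ visiting $x_1,\dots,x_{\ell_s}$ in order. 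As $U^1$ contains no tail of a $PS_s$-edge, $C_s\supseteq PS_s$, so $C_s$ is consistent with $M_s$. Deleting the edges of $C_s-F_s$ (which all lie in $\bigcup_i H[V_i,V_{i+1}]$) makes $C_1-F_1,\dots,C_q-F_q$ pairwise edge-disjoint subgraphs of $H$. For the superregularity claims, note a cluster edge $V_iV_{i+1}$ is touched only when $i\in I_s$ (merging), or $I_s=\emptyset$ and $i=j_s$ (merging), or $i=i_s$ (reordering); by~(BE3) at most $\ell m/k$ of the $PS_s$ meet $V_i$, by~(BE2) at most $\ell m/k$ have $i_s=i$, and the $j_s$ can be chosen greedily so that each cluster edge serves as a $j_s$ at most $2m/k$ times (there are at most $\ell m$ forbidden choices in total). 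So $V_iV_{i+1}$ is used at most $(2\ell+2)m/k$ times, each time destroying at most one edge at each vertex, whence no vertex ever loses more than $4(\ell+1)m/k\le\eps^2\gamma m$ edges of $H$, since $1/k\ll\gamma\ll\eps$.

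\emph{Main obstacle.} The delicate step is the merging: as $H$ is very sparse, no cluster edge of $H$ may be used too often over the $q$ rounds, so $J_s$ must be small; yet $J_s$ must hit every cycle of $F_s$ and satisfy the connectivity condition~(iii) of Lemma~\ref{mergecycles}. What makes a small $J_s$ suffice is that all ``non-winding'' behaviour of $F_s$ is confined to the clusters meeting $PS_s$ --- so those clusters already hit every cycle and bound every gap --- together with~(BE3), which controls how often a given cluster meets the $PS_s$.
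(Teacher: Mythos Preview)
Your proposal is correct and follows essentially the same approach as the paper: process the $F_s$ one at a time, apply Lemma~\ref{mergecycles} with $J_s=\{V_iV_{i+1}:i\in I_s\}$ (where $I_s=\{i:V(PS_s)\cap V_i\ne\emptyset\}$) to merge cycles, then apply Lemma~\ref{ordercycle} at the pair $(V_{i_s}^1,V_{i_s+1}^2)$ to enforce the ordering, with (BE3) controlling how many times each cluster edge of $H$ is touched. Your sets $V_i^1$, $V_{i+1}^2$, $U^1$, $U^2$ coincide with the paper's $V_i\setminus V_i^+$, $V_{i+1}\setminus V_{i+1}^-$, $V_{i_s}^1$, $V_{i_s+1}^2$, and your bookkeeping matches the paper's invariant $E(C_s-F_s)\subseteq\bigcup_{i\in I_s}E(H[V_i,V_{i+1}])$.

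The only notable difference is that you explicitly treat the degenerate case $I_s=\emptyset$ (i.e.\ $PS_s$ empty) by picking an auxiliary cluster edge $V_{j_s}V_{j_s+1}$ and spreading these choices evenly over the cluster edges; the paper does not discuss this case. This makes your argument slightly more self-contained, though in the paper's applications $PS_s$ is never empty, so the issue does not arise there.
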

\begin{proof}
Recall from (BE2) that for each $s \le q$ there is some $i_s\le k$ such that $PS_s$ is a $V_{i_s}$-extension of $M_s$.
In particular, $M_s\subseteq PS_s$. Let $I_s$ be the set consisting of all $i \le k$ such that $V_i \cap V(PS_s) \ne \emptyset$.
Since $\mathcal{BE}$ is a balanced extension with parameters $(\eps_0, \ell)$, (BE3) implies that%
    \COMMENT{Deryk: deleted $|\{s : i_s = i\}| \le \ell m/k$ in the display below since it is not used}
for every $i \le k$ we have
\begin{align} \label{sbound}
|\{s : i \in I_s\}| \le \ell m/k.
\end{align}
For each $s\le q$ in turn, we are going to show that there exist Hamilton cycles $C_{1}, \dots, C_{s}$ in $F_1 + \dots + F_s+H$ such that 
\begin{itemize}
	\item[\rm(a$_s$)] $PS_{s'}\subseteq C_{s'}$ and $C_{s'}$ is consistent with $M_{s'}$ for all $s' \le s$,%
   \COMMENT{Daniela: added $PS_{s'}\subseteq C_{s'}$}
	\item[\rm(b$_s$)] $E(C_{s'} - F_{s'}) \subseteq \bigcup_{i \in I_{s'}} E(H[V_i,V_{i+1}])  $ for all $s' \le s$,
	\item[\rm(c$_s$)] $C_1 - F_1, \dots, C_{s}-F_s$ are pairwise edge-disjoint.%
	\COMMENT{Allan: had $C_1, \dots, C_s$}
\end{itemize}
So suppose that for some $s$ with $1\le s\le q$ we have already constructed $C_1, \dots, C_{s-1}$. We now construct $C_s$ as follows. 
Let $H_s:=H - \sum_{s' < s}(C_{s'} -F_{s'})$.%
	\COMMENT{Allan: had $H_s:=H-(C_1+\dots+C_{s-1})$}
Define a new constant $d$ such that $\eps \ll d \ll 1$.

Our first task is to apply Lemma~\ref{mergecycles} to $F_s$ to merge all the cycles in $F_s$ into a Hamilton cycle using only edges of $H_s$.
For each $i \in I_s$, let $V_i^{-}$  be the set of vertices in $V_i$ with indegree one in $PS_s$
and let $V_i^{+}$  be the set of vertices in $V_i$ with outdegree one in $PS_s$.
Set $V_i^1 := V_i \setminus V_i^{+}$ and $V_{i+1}^2 := V_{i+1} \setminus V_{i+1}^{-}$.
Since $PS_s$ is locally balanced, $|V_i^{+}| = |V_{i+1}^{-}| \le \eps_0 m $ for all $i \in I_s$ (where the inequality holds by~(BE3)).
By~(b$_{s-1}$) and \eqref{sbound}, $H_s[V_i,V_{i+1}]$ is obtained from $H[V_i,V_{i+1}]$ by removing at most
$|\{s' <s : i \in I_{s'}\}| \le \ell m /k \le \eps^2\gamma m$ edges from each vertex (as $1/k \ll \eps,\gamma, 1/\ell$).
So by Proposition~\ref{superslice5}, $H_s [V_i, V_{i+1}]$ is still $(2\eps,2\gamma ,\gamma/2 , 3 \gamma)$-superregular for each $i \in I_s$.
Recall that $|V_i \setminus V_i^1| = |V_{i+1} \setminus V_{i+1}^2| \le \eps_0 m$.
Hence $H_s [V_i^1, V_{i+1}^2]$ is $(4\eps,2\gamma ,\gamma/4 , 6 \gamma)$-superregular by Proposition~\ref{superslice6}
and thus also $(4\eps,2\gamma ,\gamma/4 , 4\gamma/d)$-superregular.

Let $E_s := \{V_iV_{i+1} : i \in I_s \}$.
Our aim is to apply Lemma~\ref{mergecycles} with $F_s$, $E_s$, $H_s$, $4 \eps$, $2\gamma$, $2$, $1/8$
playing%
    \COMMENT{Need $1/8$ instead of $1/4$ since the density is $2\gamma$}
the roles of $F$, $J$, $H$, $\eps$, $d'$, $t$, $\zeta$.
Our assumption that $F_s- PS_s$ winds around $C$ implies that for each $i\in I_s$, $F_s[V_i^1,V_{i+1}^2]$ is a perfect matching.
So Lemma~\ref{mergecycles}(i) holds. Note that every final vertex of a nontrivial%
    \COMMENT{Daniela: added nontrivial}
path in $PS_s$ must lie in $\bigcup_{i \in I_s} V_i^1$,
implying Lemma~\ref{mergecycles}(ii).%
   \COMMENT{This implies Lemma~\ref{mergecycles}(ii) for cycles $D$ of $F$ which contain some nontrivial path in $PS_s$. The other cycles of $F$
must wind around $C$ and so satisfy Lemma~\ref{mergecycles}(ii) as well.}
Finally, recall that $|V_i^1|, |V_{i+1}^2| \ge (1- \eps_0) m$ for all $i \in I_s$.
Together with our assumption that $F_s - PS_s$ winds around $C$, this easily implies Lemma~\ref{mergecycles}(iii).
So we can apply Lemma~\ref{mergecycles} to obtain a Hamilton cycle $C'_s$ which is constructed from $F_s$ by replacing
$F_s[V^1_i,V^2_{i+1}]$ with a suitable perfect matching in $H_s[V^1_i,V^2_{i+1}]$ for each $i\in I_s$. In particular,
$PS_s\subseteq C'_s$.

Let $H'_s:=H_s - (C'_{s} -F_{s})$.%
	\COMMENT{Allan: had $H'_s:=H_s-C'_{s}$}
Recall that $M_s$ is an ordered directed matching, say $M_s = \{ e_1, \dots, e_r\}$, and that $PS_s$ is a $V_{i_s}$-extension of $M_s$.
For each $j \le r$, let $P_j$ be the path in $PS_s$ containing $e_j$
and let $x_j$ denote the final vertex of $P_j$. Hence $x_1, \dots, x_r$ are distinct and lie in $V_{i_s}^1$.
Together with (BE3) this implies that $r \le \eps_0 m$.
Note that $H'_s [ V_{i_s}^1,V_{i_s+1}^2]$ is obtained from $H_s[ V_{i_s}^1,V_{i_s+1}^2 ]$ by removing a perfect matching, namely $C'_{s}[V_{i_s}^1, V_{i_s+1}^2]$.%
	\COMMENT{Allan: added $C'_{s}[V_{i_s}^1, V_{i_s+1}^2]$ since Lemma~\ref{ordercycle} needs $C'_{s}[V_{i_s}^1, V_{i_s+1}^2]$ is a perfect matching.}
So by Proposition~\ref{superslice5}, $H_s' [V_{i_s}^1, V_{i_s+1}^2]$ is still $(8\eps,2\gamma ,\gamma/8 , 4\gamma/d)$-superregular.
Apply Lemma~\ref{ordercycle} with
$C'_s$, $i_s$, $H_s'[V^1_{i_s}, V^2_{i_s+1}]$, $\eps_0$, $8\eps$, $2\gamma$, $2$, $1/16$ playing the roles of
$C'$, $i$, $H$, $\gamma$, $\eps$, $d'$, $t$, $\zeta$ to obtain a Hamilton cycle $C_s$ which visits $x_1,\dots, x_r$ in this order
and is constructed from $C'_s$ by replacing the perfect matching $C'_s[V_{i_s}^1,V_{i_s+1}^2]$%
    \COMMENT{Daniela: previously had $C'[V_{i_s},V_{i_s+1}]$}
with a suitable perfect matching in $H'_s[V_{i_s}^1,V_{i_s+1}^2]$.%
    \COMMENT{Daniela: previously had $H'_s[V_{i_s},V_{i_s+1}]$}
In particular, $PS_s\subseteq C_s$.

Note that $E(C_s - F_s) \subseteq  \bigcup_{i \in I_{s}} E(H_s[V_i,V_{i+1}])$, so (b$_s$) and (c$_s$) hold.
Since $PS_s \subseteq C_s$ and $x_j$ is the final vertex of $P_j$ and since $e_j\in E(P_j)$, it follows that $C_s$ visits
the edges $e_1, \dots,e_r$ in order. So $C_s$ is consistent with $M_s$, implying (a$_s$). 
\end{proof}

\removelastskip\penalty55\medskip\noindent{\bf Proof of Lemma~\ref{merging}.}%
	\COMMENT{Allan: the proof changed a bit..}
Let $\mathcal{Q}=\{V_1,\dots,V_k\}$. By relabeling the $V_i$ if necessary, we may assume that $C=V_1\dots V_k$.
Define new constants $\gamma$ and $\eps$ such that $\eps_0, \eps',1/k \ll \gamma \ll \eps,\rho , 1 /{\ell} $ and $\mu \ll \eps\ll 1$.%
    \COMMENT{Daniela: hierarchy and some of the constants below changed because of the modifications to Lemma~\ref{slicelemma}} 
For each $i \le k$ we apply Lemma~\ref{slicelemma} to (the underlying undirected graph of) $G[V_i,V_{i+1}]$ in order to obtain
a spanning subdigraph $H$ of $G$ which satisfies the following properties: 
\begin{itemize}
\item[{\rm (i$'$)}] For each $i \le k$, $H[V_i, V_{i+1}]$ is $(\eps, 2\gamma , \gamma , 3 \gamma )$-superregular.
\item[{\rm (ii$'$)}] Let $G':=G-H$. Then $(G', \mathcal{Q}, C)$ is a $(k,m, \mu,4\gamma)$-cyclic system.
\end{itemize}
Indeed, (ii$'$) follows easily from Lemma~\ref{slicelemma}(ii) and the definition of a $(k,m,\mu,4\gamma)$-cyclic system.
Recall that $\mathcal{BE} = \{PS_1, \dots, PS_q\}$ with $M_s \subseteq PS_s$ for all $s \le q$. Our next aim is to
apply Lemma~\ref{extend1-factor} with $G'$, $\mathcal{BE}$, $4\gamma$ playing the roles of $G$, $\mathcal{PS}$, $\eps$
to obtain $1$-factors $F_s$ extending the $PS_s$.%
    \COMMENT{Deryk: last bit is new}
Note that (BE1) and (BE3) imply that conditions~(i) and~(ii) of Lemma~\ref{extend1-factor} hold.
So we can apply Lemma~\ref{extend1-factor} to obtain $q$ (directed) $1$-factors $F_1, \dots, F_q$ in $G'+\mathcal{BE}$%
   \COMMENT{Daniela: replaced on $V$ by in $G'+\mathcal{BE}$}
such that $PS_s \subseteq F_s$ for all $s \le q$ and $F_1 - PS_1, \dots, F_q-PS_q$ are edge-disjoint subgraphs of $G'$.
Recall from (ii$'$) and (Sys2) that $G'$ (and thus also $F_s - PS_s$) winds around~$C$.
So we can apply Lemma~\ref{mergecycles2} to obtain $q$ Hamilton cycles $C_1, \dots, C_q$ in
$F_1+ \dots+F_q+H$ such that $C_s$ contains $PS_s$ and is consistent with $M_s$ for all $s \le q$, and such that $C_1 - F_1, \dots, C_q-F_q$ are edge-disjoint subgraphs of $H$.
Since $H$ and $G'$%
    \COMMENT{Daniela: had $H$ and $H'$ instead of $H$ and $G'$}
are edge-disjoint, $C_1 - PS_1, \dots, C_q-PS_q$ are edge-disjoint subgraphs of $G$.
\endproof

We can now put everything together to prove the approximate decomposition lemma in the two cliques case.
First we apply Lemma~\ref{sysdecom} to obtain cyclic systems and sparse subgraphs $H_{A,j}$ and $H_{B,j}$.
Then we apply Lemma~\ref{balanceextension} to balance out the exceptional systems into balanced extensions.
Next, we apply Lemma~\ref{merging} to $A$ and $B$ separately to extend the balanced extensions into Hamilton cycles.%
    \COMMENT{Deryk: added blabla}

\removelastskip\penalty55\medskip\noindent{\bf Proof of Lemma~\ref{almostthm}. }
Apply Lemma~\ref{sysdecom} to $G, \mathcal{P}$ and $\mathcal{J}$ to obtain  (for each $1 \le j \le (K-1)/2$)  pairs of 
tuples $(G_{A,j}, \mathcal{Q}_{A}, C_{A,j}, H_{A,j}, \mathcal{J}^*_{A,j})$ and $(G_{B,j}, \mathcal{Q}_{B}, C_{B,j}, H_{B,j}, \mathcal{J}^*_{B,j})$
which satisfy (a$_1$)--(a$_7$).
Fix $j \le (K-1)/2$. Write $\mathcal{J}^*_{A,j} = \{J^*_{A, {\rm dir},1}, \dots, J^*_{A, {\rm dir},q}\}$, where%
    \COMMENT{Deryk: defined $q$ and used it below}
\begin{equation}\label{eq:q}
q:=|\mathcal{J}^*_{A,j}|\le (1-4\mu-3\rho)m
\end{equation}
by~(a$_3$).
We now apply Lemma~\ref{balanceextension} with $\mathcal{J}^*_{A,j},\mathcal{Q}_A,C_{A,j}, H_{A,j}, K, 5K \sqrt{\eps_{0}}$ playing
the roles of $\mathcal{M}, \mathcal{Q}, C, H, k,\eps$ to obtain an orientation $H_{A,j,{\rm dir}}$ of $H_{A,j}$ and
a balanced extension $\mathcal{BE}_j$ of $\mathcal{J}^*_{A,j}$ with respect to $(\mathcal{Q}_A, C_{A,j})$ and parameters $(10K \sqrt{\eps_{0}},3)$.
(Note that (a$_3$) and (a$_5$) imply conditions (i) and (ii) of Lemma~\ref{balanceextension}.)
Write $\mathcal{BE}_j :=  \{PS_{1}, \dots, PS_q \}$ such that $J^*_{A, {\rm dir},s} \subseteq PS_s$ for all $s \le q$.
So (BE1) implies that%
   \COMMENT{Daniela: reworded}
$PS_{1} - J^*_{A, {\rm dir},1}, \dots, PS_q - J^*_{A, {\rm dir}, q}$ are edge-disjoint subgraphs of $H_{A,j,{\rm dir}}$.%
	\COMMENT{Allan:added more explanations}
Since $(G_{A,j, {\rm dir}}, \mathcal{Q}_{A}, C_{A,j})$ is a $(K,m, 4 \mu, 5/K)$-cyclic system by (a$_6$), (\ref{eq:q}) implies that%
    \COMMENT{Deryk: added ref to (\ref{eq:q})}
we can apply Lemma~\ref{merging} as follows:
\smallskip
\begin{center}
  \begin{tabular}{ r | c | c | c | c | c | c | c | c | c | c |c}

& $G_{A,j, {\rm dir}}$ & $\mathcal{Q}_{A}$ & $C_{A,j}$ &  $K$ & $\mathcal{J}^*_{A,j}$ & $q$ & $4 \mu$ & $3\rho$ & $10K \sqrt{\eps_{0}}$ & $5/K$ & $3$ \\ \hline
plays role of & $G$ & $\mathcal{Q}$ & $C$ & $k$ &  $\mathcal{M}$ & $q$ & $\mu$  & $\rho$ &  $\eps_0$ & $\eps'$ &  $\ell$ \\
  \end{tabular}
\end{center}
\smallskip
\noindent
In this way we obtain $q$ directed Hamilton cycles $C'_{A,j,1}, \dots, C'_{A,j,q}$
in $G_{A,j, {\rm dir}} + \mathcal{BE}_j$ such that $C'_{A,j,s}$ contains $PS_{s}$ and is consistent with $J^*_{A, {\rm dir},s}$ for all $s \le q$.
Moreover, $C'_{1} - J^*_{A, {\rm dir},1}, \dots, C'_q - J^*_{A, {\rm dir}, q}$ are edge-disjoint subgraphs of $G_{A,j,{\rm dir}} + H_{A,j,{\rm dir}}$.%
	\COMMENT{Allan: added details here, the rest of the proof remains the same.}
Repeat this process for all $j \le (K-1)/2$.

Write $\mathcal{J} = \{ J_1, \dots, J_{|\mathcal{J}|}\}$.
Recall from (a$_2$) that the $\mathcal{J}^*_{A,1}, \dots, \mathcal{J}^*_{A,(K-1)/2}$ partition $\{J_{A,{\rm dir}}^* :J \in \mathcal{J}\}$.
Therefore, we have obtained $|\mathcal{J}|$ directed Hamilton cycles%
    \COMMENT{Daniela: had "edge-disjoint directed Hamilton cycles" before}
$C'_{A,1}, \dots, C'_{A,|\mathcal{J}|}$ on vertex set $A$.
Moreover, by relabelling the $J_s$ if necessary, we may assume that $C'_{A,s}$ is consistent with $(J_s)^*_{A, {\rm dir}}$ for all $s \le |\mathcal{J}|$.
Furthermore, (a$_4$) implies that the undirected versions of $C'_{A,1} - (J_1)^*_{A, {\rm dir}}, \dots, C'_{A,|\mathcal{J}|} - (J_{|\mathcal{J}|})^*_{A, {\rm dir}}$
are edge-disjoint spanning subgraphs of $G[A]$.

Similarly we obtain directed Hamilton cycles%
    \COMMENT{Daniela: had "edge-disjoint directed Hamilton cycles" before}
$C'_{B,1}, \dots, C'_{B,|\mathcal{J}|}$ on vertex set $B$
so that $(J_s)^*_{B, {\rm dir}} \subseteq C'_{B,s}$ for all $s \le |\mathcal{J}|$.
Let $H_s$ be the undirected graph obtained from $C'_{A,s} + C'_{B,s}-J_s^* +J_s$ by ignoring all the orientations of the edges.
Recall that $J_1, \dots, J_{|\mathcal{J}|}$%
\COMMENT{Andy: replaced subscript $s$ with subscript $|\mathcal{J}|$.}
are edge-disjoint exceptional systems and that they are edge-disjoint from the $C'_{A,s} + C'_{B,s}-J_s^*$
by (EC3).%
    \COMMENT{Deryk: added detail}
So $H_1, \dots, H_{|\mathcal{J}|}$ are edge-disjoint spanning subgraphs of $G$.
Finally, Proposition~\ref{prop:ES2} implies that $H_1, \dots, H_{|\mathcal{J}|}$ are indeed as desired in Lemma~\ref{almostthm}.
\endproof


\section{The bipartite case}
Roughly speaking, the idea in this case is to reduce the problem of finding the desired edge-disjoint Hamilton cycles in~$G$
to that of finding suitable Hamilton cycles in an almost complete balanced bipartite graph.
This is achieved by considering the graphs $J^*_{\rm dir}$, which we define in the next subsection.
The main steps are similar to those in the proof of Lemma~\ref{almostthm}
(in fact, we re-use several of the lemmas, in particular Lemma~\ref{merging}).

We will construct the graphs $J^*_{\rm dir}$, which are based on balanced exceptional systems~$J$, in Section~\ref{sec:J*2bip}.
In Section~\ref{systembip} we describe a decomposition of~$G$ into blown-up Hamilton cycles.
We will construct balanced extensions in Section~\ref{sec:BEbip}
(this is more difficult than in the two cliques case).
Finally, we obtain the desired Hamilton cycles using Lemma~\ref{merging} (in the same way as in the two cliques case).

\subsection{Defining the graphs $J^*_{\rm dir}$ for the bipartite case} \label{sec:J*2bip}

Let $\mathcal{P}$ be a $(K,m,\eps)$-partition of a vertex set $V$ and
let $J$ be a balanced exceptional system with respect to~$\mathcal{P}$ (as defined in Section~\ref{mainbi}).
We construct $J^{*}$ in two steps. 
First we will construct a matching $J^*_{AB}$ on $A \cup B$
and then $J^{*}.$%
	   \COMMENT{Allan: I have rephrased the definition/construction of $J^*$. $J^*$ is still the same as before.
The construction is similar to the two cliques case.  So check whether things
are ok now. Note we no longer have conditions (BES$'$1), (BES$'$2), (BES$^*$1),(BES$^*$2) but they don't appear anywhere here (and only once in paper 2).}
Since each maximal path in $J$ has endpoints in $A \cup B$ and internal vertices in $V_0$ by (BES1), a balanced exceptional system $J$ naturally induces a matching $J^*_{AB}$ on $A \cup B$.
More precisely, if $P_1, \dots ,P_{\ell'}$ are the non-trivial paths in~$J$ and $x_i, y_i$ are the endpoints of $P_i$, then we define $J^*_{AB} := \{x_iy_i : i  \le \ell'\}$. 
Thus $J^*_{AB}$ is a matching by~(BES1) and $e(J^*_{AB}) \le e(J)$.%
    \COMMENT{Daniela: deleted  by~(BES4)}
Moreover, $J^*_{AB}$ and $E(J)$ cover exactly the same vertices in $A$. 
Similarly, they cover exactly the same vertices in $B$. 
So (BES3) implies that $e(J^*_{AB}[A])=e(J^*_{AB}[B])$.
We can write $E(J^*_{AB}[A])=\{x_1x_2, \dots, x_{2s-1}x_{2s}\}$,
$E(J^*_{AB}[B])=\{y_1y_2, \dots, y_{2s-1}y_{2s}\}$ and $E(J^*_{AB}[A,B])=\{x_{2s+1}y_{2s+1}, \dots, x_{s'}y_{s'}\}$, where $x_i \in A$ and $y_i \in B$.
Define $J^*:= \{ x_iy_i : 1 \le i \le s' \}$.
Note that 
\begin{align}
	e(J^*) =  e(J^*_{AB}) \le e(J). \label{BESeq}
\end{align}
All edges of $J^*$ are called \emph{fictive edges}.%
   \COMMENT{Daniela: deleted "Similarly as in the two cliques case, we regard $J^*$ as being edge-disjoint from the original graph~$G$."}
We say that an (undirected) cycle $D$ is \emph{consistent with $J^*$} if $D$ contains $J^*$ and (there is an orientation of $D$ which)
visits the vertices $x_1,y_1,x_2,\dots,y_{s'-1},x_{s'},y_{s'}$ in this order.
The following proposition is proved in~\cite{paper2}.
It is illustrated in Figure~\ref{fig3}.%
	\COMMENT{Allan:New sentence and figure.}

\begin{figure}[tbp]
\centering
\subfloat[$J$]{
\begin{tikzpicture}[scale=0.35]
			\draw  (-3,0) ellipse (1.5  and 3.8 );
			\draw (3,0) ellipse (1.5  and 3.8 );
			\node at (-3,-4.5) {$A$};
			\node at (3,-4.5) {$B$}; 			
			\draw (-3,5) circle(0.5);
			\draw (3,5) circle (0.5);
			\node at (-3,6) {$A_0$};
			\node at (3,6) {$B_0$};
			\fill (-3,5) circle (4pt);
			\fill (3,5) circle (4pt);
			\begin{scope}[start chain]
			\foreach \i in {2.5,1.5,-0.5}
			\fill (-3,\i) circle (4pt);
			\end{scope}
			\begin{scope}[start chain]
			\foreach \i in {2.5,1.5,-0.5}
			\fill (3,\i) circle (4pt);
			\end{scope}

			\begin{scope}[line width=1.5pt]
			\draw (-3,-0.5) to [out=115, in=-115] (-3,5)--(3,-0.5);
			\draw (-3,1.5)--(3,5)--(-3,2.5);
			\draw (3,2.5)--(3,1.5);
			\end{scope}
			
			\begin{scope}[line width=0.6pt]
			\draw (3,2.5)-- (-3,1.5);
			\draw (3,1.5)-- (-3,0.5)--(3,0.5)  -- (-3,-0.5);
			\draw (3,-0.5)--(-3,-1.5) -- (3,-1.5) --(-3,-2.5) -- (3,-2.5) -- (-3,2.5);
			\end{scope}
\end{tikzpicture}
}
\qquad
\subfloat[$J_{AB}^*$]{
\begin{tikzpicture}[scale=0.35]
			\draw  (-3,0) ellipse (1.5  and 3.8 );
			\draw (3,0) ellipse (1.5  and 3.8 );
			\node at (-3,-4.5) {$A$};
			\node at (3,-4.5) {$B$}; 			
			\draw (-3,5) circle(0.5);
			\draw (3,5) circle (0.5);
			\node at (-3,6) {$A_0$};
			\node at (3,6) {$B_0$};
			\fill (-3,5) circle (4pt);
			\fill (3,5) circle (4pt);
			\begin{scope}[start chain]
			\foreach \i in {2.5,1.5,-0.5}
			\fill (-3,\i) circle (4pt);
			\end{scope}
			\begin{scope}[start chain]
			\foreach \i in {2.5,1.5,-0.5}
			\fill (3,\i) circle (4pt);
			\end{scope}

			\begin{scope}[line width=1.5pt]
			\draw (-3,-0.5) --(3,-0.5);
			\draw (-3,1.5)--(-3,2.5);
			\draw (3,2.5)--(3,1.5);
			\end{scope}
			
			\begin{scope}[line width=0.6pt]
			\draw (3,2.5)-- (-3,1.5);
			\draw (3,1.5)-- (-3,0.5)--(3,0.5)  -- (-3,-0.5);
			\draw (3,-0.5)--(-3,-1.5) -- (3,-1.5) --(-3,-2.5) -- (3,-2.5) -- (-3,2.5);
			\end{scope}
\end{tikzpicture}
}
\qquad
\subfloat[$J^*$]{
\begin{tikzpicture}[scale=0.35]
			\draw  (-3,0) ellipse (1.5  and 3.8 );
			\draw (3,0) ellipse (1.5  and 3.8 );
			\node at (-3,-4.5) {$A$};
			\node at (3,-4.5) {$B$}; 			
			\draw (-3,5) circle(0.5);
			\draw (3,5) circle (0.5);
			\node at (-3,6) {$A_0$};
			\node at (3,6) {$B_0$};
			\fill (-3,5) circle (4pt);
			\fill (3,5) circle (4pt);
			\begin{scope}[start chain]
			\foreach \i in {2.5,1.5,-0.5}
			\fill (-3,\i) circle (4pt);
			\end{scope}
			\begin{scope}[start chain]
			\foreach \i in {2.5,1.5,-0.5}
			\fill (3,\i) circle (4pt);
			\end{scope}

			\begin{scope}[line width=1.5pt]
			\draw (-3,-0.5)--(3,-0.5);
			\draw (-3,1.5)--(3,1.5);
			\draw (3,2.5)--(-3,2.5);
			\end{scope}

			\begin{scope}[line width=0.6pt]
			\draw (3,2.5)-- (-3,1.5);
			\draw (3,1.5)-- (-3,0.5)--(3,0.5)  -- (-3,-0.5);
			\draw (3,-0.5)--(-3,-1.5) -- (3,-1.5) --(-3,-2.5) -- (3,-2.5) -- (-3,2.5);
			\end{scope}
\end{tikzpicture}
}
\caption{The thick lines illustrate the edges of $J$, $J_{AB}^*$ and $J^*$, respectively.}

\label{fig3}
\end{figure}
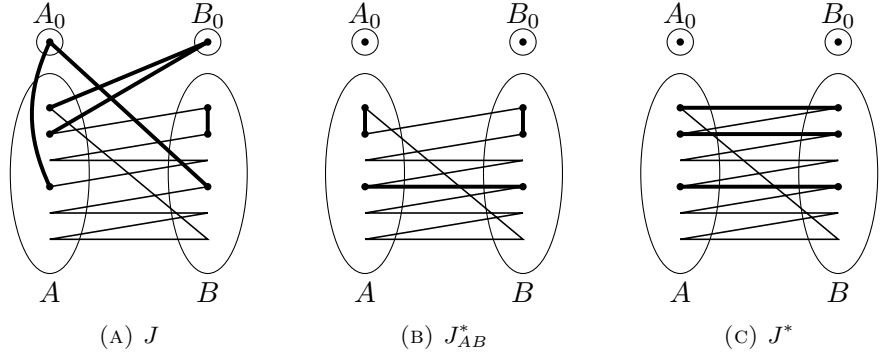

\begin{prop}\label{CES-H}
Let $\cP$ be a $(K,m,\eps)$-partition of a vertex set $V$.
Let $G$ be a graph on $V$ and let $J$ be a balanced exceptional system with respect to~$\cP$. If $J\subseteq G$ and
$D$ is a Hamilton cycle of $G[A\cup B]+ J^*$ which is consistent with $J^{*}$, then $D-J^*+J$ is a Hamilton cycle of $G$.
\end{prop}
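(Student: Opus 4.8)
The plan is to track what happens to each edge and each vertex when we pass from the Hamilton cycle $D$ of $G[A\cup B]+J^*$ to the graph $D-J^*+J$, and to check that the result is a single spanning cycle of $G$. First I would record the combinatorial structure of $J$ and $J^*$ established just before the proposition: by (BES1), each maximal (nontrivial) path $P_i$ of $J$ has both endpoints $x_i,y_i$ in $A\cup B$ and all internal vertices in $V_0$, and $J^*$ is the matching $\{x_iy_i\}$ obtained by contracting each such path to its endpoint-pair. In particular $J$ and $J^*$ cover exactly the same vertices of $A\cup B$, and every vertex of $V_0$ has degree $2$ in $J$ while being absent from $G[A\cup B]+J^*$. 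I would also note that $J\subseteq G$ and that the edges of $J^*$ are fictive, i.e.\ $J^*$ is edge-disjoint from $G[A\cup B]$, so $D-J^*$ is a subgraph of $G[A\cup B]\subseteq G$ and $D-J^*+J$ is genuinely a subgraph of $G$ (here we use that $J$ and $D-J^*$ are edge-disjoint, since $J\subseteq G-G[A\cup B]-\dots$ contains no $A\cup B$-internal edges — more precisely $J$ has no edge with both endpoints in $A\cup B$ lying in $G[A\cup B]$, as its edges are incident to $V_0$).

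The core of the argument is a local exchange along each path $P_i$. Since $D$ is consistent with $J^*$, after fixing the appropriate orientation $D$ traverses the vertices $x_1,y_1,x_2,\dots,x_{s'},y_{s'}$ in this cyclic order, and in particular $D$ contains each fictive edge $x_iy_i$ and immediately after traversing $x_iy_i$ it proceeds (along a $D-J^*$ segment) from $y_i$ to $x_{i+1}$. I would argue that replacing the single edge $x_iy_i$ of $D$ by the path $P_i$ of $J$ (which runs $x_i \,\text{--}\,\dots\,\text{--}\, y_i$ through internal vertices in $V_0$, all of which are new and distinct across different $i$ by (BES1) and the vertex-disjointness of the $P_i$) keeps the object a cycle: we are subdividing the edge $x_iy_i$ into a path on the same endpoints, inserting fresh degree-$2$ vertices. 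Doing this simultaneously for all $i\le \ell'$ replaces $J^*$ by $J$ and turns $D$ into $D-J^*+J$, which is therefore still a single cycle.

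Finally I would check spanning-ness: $D-J^*+J$ has vertex set $V(D)\cup V_0 = (A\cup B)\cup V_0 = V$ (every vertex of $A\cup B$ was already in $D$ because $D$ is a Hamilton cycle of $G[A\cup B]+J^*$, and every vertex of $V_0$ lies on some $P_i$ since $d_J(v)=2$ for $v\in V_0$ by (BES1)), and every vertex has degree exactly $2$ in it: vertices of $V_0$ are internal to the inserted paths, and each $x_i,y_i$ has one incident edge from $P_i$ replacing its one incident fictive edge in $D$, the other incident $D$-edge being retained. Hence $D-J^*+J$ is a connected $2$-regular spanning subgraph of $G$, i.e.\ a Hamilton cycle of $G$, as claimed. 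I expect the only delicate point to be the bookkeeping that the consistency condition on $D$ exactly matches the order in which the endpoint-pairs $x_iy_i$ sit on $D$, so that each fictive edge is present in $D$ and the simultaneous substitutions do not interfere; this is really the content imported from the definition of "consistent with $J^*$" and from (BES1)–(BES3), and once it is set up the rest is immediate. (The analogous statement in the two cliques case, Proposition~\ref{prop:ES}, is handled the same way, and one could alternatively deduce the present proposition from it after splitting $J^*$ as $J^*_{AB}[A]\cup J^*_{AB}[B]\cup J^*_{AB}[A,B]$, but the direct argument above is cleaner.)
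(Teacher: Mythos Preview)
Your proposal has a genuine gap: you have conflated $J^*$ with $J^*_{AB}$. In the bipartite case these are different objects. The matching $J^*_{AB}=\{x_iy_i\}$ obtained by contracting each nontrivial path $P_i$ of $J$ to its endpoint-pair is what you describe, but $J^*$ is a further transformation: writing $E(J^*_{AB}[A])=\{x_1x_2,\dots,x_{2s-1}x_{2s}\}$, $E(J^*_{AB}[B])=\{y_1y_2,\dots,y_{2s-1}y_{2s}\}$ and $E(J^*_{AB}[A,B])=\{x_{2s+1}y_{2s+1},\dots,x_{s'}y_{s'}\}$ with $x_i\in A$, $y_i\in B$, one sets $J^*:=\{x_iy_i:1\le i\le s'\}$. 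Thus every edge of $J^*$ is an $AB$-edge, whereas $J^*_{AB}$ may contain $AA$- and $BB$-edges. For $i\le 2s$ the fictive edge $x_iy_i\in J^*$ does \emph{not} correspond to a single path of $J$; rather the pair $\{x_{2j-1}y_{2j-1},\,x_{2j}y_{2j}\}$ of $J^*$-edges corresponds to the pair of $J$-paths with endpoints $\{x_{2j-1},x_{2j}\}$ and $\{y_{2j-1},y_{2j}\}$ respectively. Your ``subdivide each fictive edge by the corresponding $J$-path'' step therefore does not apply.

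The correct argument uses the consistency order $x_1,y_1,x_2,y_2,\dots$ in an essential way. For each $j\le s$, the cycle $D$ traverses $x_{2j-1}\,y_{2j-1}\,\dots\,x_{2j}\,y_{2j}$ with the two fictive edges $x_{2j-1}y_{2j-1}$ and $x_{2j}y_{2j}$ separated by a $(D-J^*)$-segment $Q_j$ from $y_{2j-1}$ to $x_{2j}$. Removing those two fictive edges and inserting the $J$-path from $x_{2j-1}$ to $x_{2j}$ together with the $J$-path from $y_{2j-1}$ to $y_{2j}$ reroutes the cycle as $x_{2j-1}\to x_{2j}\to (Q_j\text{ reversed})\to y_{2j-1}\to y_{2j}$, which is still a single walk through these vertices. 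For $i>2s$ your simple subdivision does work. Combining both, together with the degree count for $V_0$, gives the Hamilton cycle. (Note also that your claim ``$J$ has no edge with both endpoints in $A\cup B$'' is false: (BES2) explicitly allows $A_{i_1}A_{i_2}$- and $B_{i_3}B_{i_4}$-edges in $J$; these are exactly the length-one paths yielding the $AA$- and $BB$-edges of $J^*_{AB}$.) The paper itself does not give a proof here but cites~\cite{paper2}; the argument there follows the two-step scheme just outlined.
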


Again, we will actually need a directed version, which follows immediately from the above. 
For this, define $J^*_{\rm dir}$ to be the ordered directed matching $\{f_1, \dots, f_{s'}\}$ such that $f_i$ is a
directed edge from $x_i$ to $y_i$ for all $i \le s'$. So $J^*_{\rm dir}$ consists only of $AB$-edges.%
    \COMMENT{Deryk: added new sentence}
Similarly to the undirected case, we say that a directed cycle
$D_{\rm dir}$ is \emph{consistent with $J^*_{\rm dir}$}
if $D_{\rm dir}$ contains $J^*_{\rm dir}$ and visits the edges $f_1,\dots,f_{s'}$ in this order.

\begin{prop}\label{CES-H2}
Let $\cP$ be a $(K,m,\eps)$-partition of a vertex set $V$.
Let $G$ be a graph on $V$ and let $J$ be a balanced exceptional system with respect to~$\cP$ such that $J\subseteq G$.
Suppose that $D_{\rm dir}$ is a directed Hamilton cycle on $A \cup B$ such that $D_{\rm dir}$ is consistent with $J^*_{\rm dir}$.
Furthermore, suppose that $D - J^*  \subseteq G$, where $D$ is the cycle obtained from $D_{\rm dir}$ after ignoring the directions of all edges. 
Then $D-J^*+J$ is a Hamilton cycle of $G$.
\end{prop}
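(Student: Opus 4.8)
The plan is to deduce Proposition~\ref{CES-H2} directly from Proposition~\ref{CES-H}: once we forget the orientations of all edges, the directed hypotheses translate precisely into the hypotheses of the undirected statement, so there is really nothing new to prove.

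First I would check that $D$ is a Hamilton cycle of the (multi)graph $G[A\cup B]+J^*$ which is consistent with $J^*$. Since $D_{\rm dir}$ contains $J^*_{\rm dir}$ and, by definition, $J^*_{\rm dir}$ is just the orientation of $J^*$, the undirected cycle $D$ contains $J^*$. Moreover every edge of $D$ has both endpoints in $A\cup B$ (as $D_{\rm dir}$ is a directed Hamilton cycle on $A\cup B$), so the hypothesis $D-J^*\subseteq G$ gives $D-J^*\subseteq G[A\cup B]$, and therefore $D=(D-J^*)+J^*\subseteq G[A\cup B]+J^*$. Hence $D$ is a Hamilton cycle of $G[A\cup B]+J^*$.

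Next I would verify consistency with $J^*$. By hypothesis $D_{\rm dir}$ visits the directed edges $f_1,\dots,f_{s'}$ in this cyclic order, and each $f_i$ is directed from $x_i$ to $y_i$. Consequently, traversing $D$ in the direction inherited from $D_{\rm dir}$, we encounter the vertices $x_1,y_1,x_2,y_2,\dots,x_{s'},y_{s'}$ in exactly this order, which is precisely the condition for the undirected cycle $D$ to be consistent with $J^*$. Finally, since $J\subseteq G$ by hypothesis, Proposition~\ref{CES-H} applied to $G$, $J$ and $D$ yields that $D-J^*+J$ is a Hamilton cycle of $G$.

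I do not expect any real obstacle: the entire content is the observation that ``$D_{\rm dir}$ consistent with $J^*_{\rm dir}$'' degenerates to ``$D$ consistent with $J^*$'' after discarding orientations, together with the trivial point that deleting and then re-adding the fictive edges keeps us inside $G[A\cup B]+J^*$. The only mild care needed is to treat $G[A\cup B]+J^*$ as a multigraph, regarding the fictive edges of $J^*$ as distinct from the edges of $G$, exactly as in the undirected Proposition~\ref{CES-H}.
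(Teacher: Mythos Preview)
Your proposal is correct and is exactly the approach the paper takes: the paper simply asserts that Proposition~\ref{CES-H2} ``follows immediately from the above'' (i.e.\ from Proposition~\ref{CES-H}), and your argument spells out precisely why, by checking that forgetting orientations turns directed consistency with $J^*_{\rm dir}$ into undirected consistency with $J^*$ and that $D\subseteq G[A\cup B]+J^*$.
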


\subsection{Finding systems} \label{systembip}

The following lemma gives a decomposition of an almost complete bipartite graph~$G$ into 
blown-up Hamilton cycles (together with an associated decomposition of exceptional systems).
Its proof is almost the same as that of Lemma~\ref{sysdecom}, so we omit it here.
The only difference is that instead of Walecki's theorem we use a result of Auerbach and Laskar~\cite{auerbach} to decompose the complete 
bipartite graph $K_{K,K}$ into Hamilton cycles, where $K$ is even.%
	\COMMENT{The proof is in an appendix at the end of the file.}
\begin{lemma}\label{sysdecombip}
Suppose that $0<1/n \ll \eps_0  \ll 1/K \ll \rho  \ll 1$ and $0 \leq \mu \ll 1$,
where $n,K \in \mathbb N$ and $K$ is even.
Suppose that $G$ is a graph on $n$ vertices and $\mathcal{P}=\{A_0,A_1,\dots,A_K,B_0,B_1,\dots,B_K\}$ is a $(K, m, \eps _0)$-partition of $V(G)$.
Furthermore, suppose that the following conditions hold:%
\COMMENT{Note that we do not need to assume Lemma~\ref{almostthmbip}(d)}
\begin{itemize}
	\item[{\rm (a)}] $d(v,B_i) = (1 - 4 \mu \pm 4 /K) m $ and $d(w,A_i) = (1 - 4 \mu \pm 4 /K) m $ for all
	$v \in A$, $w \in B$ and $1\leq i \leq K$.
	\item[{\rm (b)}] There is a set $\mathcal J$ which consists of at most $(1/4-\mu - \rho)n$ edge-disjoint  exceptional systems with parameter $\eps_0$ in~$G$.
	\item[{\rm (c)}] $\mathcal J$ has a partition into $K^4$ sets $\mathcal J_{i_1,i_2,i_3,i_4}$ (one for all $1\le \I \le K$) such that each $\mathcal J_{\I}$ consists of precisely $|\mathcal J|/{K^4}$ $\i$-BES with respect to~$\cP$.
\end{itemize}
Then for each $1 \le j \le K/2$, there is a tuple $(G_{j}, \mathcal{Q}, C_{j}, H_{j}, \mathcal{J}_{j})$
such that the following assertions hold, where $\mathcal{Q}:=\{A_1, \dots, A_K,B_1, \dots, B_K \}$:
\begin{itemize}
\item[{\rm (a$_1$)}] Each of $C_{1}, \dots, C_{K/2}$ is a directed Hamilton cycle on $\mathcal{Q}$ 
such that the undirected versions of these cycles form a Hamilton decomposition of the complete bipartite graph
whose vertex classes are $\{A_1, \dots, A_K \}$ and $\{B_1, \dots, B_K \}$.

\item[{\rm (a$_2$)}] $\mathcal{J}_{1}, \dots, \mathcal{J}_{K/2}$ is a partition of $\mathcal{J}$.

\item[{\rm (a$_3$)}] Each $\mathcal{J}_{j}$ has a partition into $K^4$ sets $\mathcal J_{j,\I}$ (one for all $1\le \I\le K$)
such that $\mathcal J_{j,\I}$ consists of $\i$-BES with respect to~$\cP$ and $|\mathcal J_{j,\I}| \le (1- 4\mu - 3\rho) m/K^4$.

\item[{\rm (a$_4$)}] $G_{1},\dots, G_{K/2}, H_{1},\dots,  H_{K/2}$ are edge-disjoint subgraphs of $G[A,B]$.

\item[{\rm (a$_5$)}] $H_{j}[A_{i},B_{i'}]$ is a $(11K + 248/K) \eps_0 m$-regular graph for all $j \le K/2$ and all $i,i' \le K$.%
\COMMENT{We have omitted the floor/ceiling on $(11K + 248/K) \eps_0 m$.}

\item[{\rm (a$_6$)}] For each $j \le K/2$, there exists an orientation $G_{j,{\rm dir}}$ of $G_{j}$ such that $(G_{j,{\rm dir}}, \mathcal{Q}, C_{j})$ is a $(2K,m, 4\mu, 5/K)$-cyclic system.

\end{itemize}
\end{lemma}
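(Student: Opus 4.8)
The plan is to follow the proof of Lemma~\ref{sysdecom} essentially verbatim, making only two substitutions: Walecki's theorem is replaced by the Hamilton decomposition of $K_{K,K}$ due to Auerbach and Laskar~\cite{auerbach} (valid since $K$ is even), and the square-root bound coming from (ES3) in the two cliques case is replaced by the bound (BES4), which via \eqref{BESeq} gives $e(J^*_{\rm dir})\le e(J)\le\eps_0 n$ for every balanced exceptional system $J\in\mathcal J$. Concretely, I would first invoke~\cite{auerbach} to write the complete bipartite graph with vertex classes $\{A_1,\dots,A_K\}$ and $\{B_1,\dots,B_K\}$ as the edge-disjoint union of $K/2$ Hamilton cycles, and orient each of them cyclically to obtain the directed Hamilton cycles $C_1,\dots,C_{K/2}$ on $\mathcal Q=\{A_1,\dots,A_K,B_1,\dots,B_K\}$; this is (a$_1$). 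Next, for each $4$-tuple $(i_1,i_2,i_3,i_4)$ I would partition $\mathcal J_{i_1,i_2,i_3,i_4}$ into $K/2$ sets $\mathcal J_{j,i_1,i_2,i_3,i_4}$ of as equal size as possible and set $\mathcal J_j:=\bigcup_{(i_1,\dots,i_4)}\mathcal J_{j,i_1,\dots,i_4}$, which gives (a$_2$) and the $\i$-BES part of (a$_3$) immediately. The size bound $|\mathcal J_{j,\I}|\le(1-4\mu-3\rho)m/K^4$ follows from (b), the inequality $2Km\ge(1-\eps_0)n$ and the hierarchy $\eps_0\ll1/K\ll\rho$, exactly as in the bound on $|\mathcal J^*_{A,j,i}|$ in the proof of Lemma~\ref{sysdecom}.

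For the sparse graphs, I would use condition~(a): it says $G[A_i,B_{i'}]$ is a bipartite graph with vertex classes of size $m$ in which every degree is $(1-4\mu\pm4/K)m$, so Lemma~\ref{regularsub}, applied with $4/K,\rho,4\mu$ playing the roles of $\eps,\rho,\mu$, yields a spanning $(1-4\mu-\rho)m$-regular subgraph $H_{i,i'}$ of $G[A_i,B_{i'}]$. Being a regular bipartite graph, $H_{i,i'}$ decomposes into perfect matchings, which I would regroup into $K/2$ edge-disjoint $(11K+248/K)\eps_0 m$-regular spanning subgraphs $H_{i,i',1},\dots,H_{i,i',K/2}$ (possible since $(K/2)(11K+248/K)\eps_0\le1-4\mu-\rho$ by the hierarchy). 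Setting $H_j:=\sum_{i,i'\le K}H_{i,i',j}$ then gives $H_j[A_i,B_{i'}]=H_{i,i',j}$, hence (a$_5$), and the $H_j$ are edge-disjoint subgraphs of $G[A,B]$. Finally I would put $G':=G[A,B]-(H_1+\dots+H_{K/2})$: each vertex loses at most $(K/2)(11K+248/K)\eps_0 m\le m/K$ edges towards any fixed cluster, so $d_{G'}(v,B_{i'})=(1-4\mu\pm5/K)m$ for all $v\in A$, $i'\le K$ (and symmetrically for $B$). For each $j\le K/2$ I would let $G_j$ be the union of the bipartite graphs $G'[A_i,B_{i'}]$ over the edges $A_iB_{i'}$ of $C_j$, and let $G_{j,{\rm dir}}$ be obtained by orienting each such edge in the direction prescribed by $C_j$; then $G_{j,{\rm dir}}$ winds around $C_j$, and the degree estimate above is exactly what (Sys2) requires, so $(G_{j,{\rm dir}},\mathcal Q,C_j)$ is a $(2K,m,4\mu,5/K)$-cyclic system. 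This establishes (a$_4$) and (a$_6$).

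The only genuinely bipartite-specific point — and the step I would double-check — is the bookkeeping around the regularity degree $(11K+248/K)\eps_0 m$ in (a$_5$). It must be large enough that the later application of Lemma~\ref{balanceextension} (needing $2\eps m$-regular graphs $H_j[V_{i-1},V_{i+1}]$ with $\eps m\ge e(J^*_{\rm dir})$, and here $e(J^*_{\rm dir})\le\eps_0 n\le O(K\eps_0 m)$) goes through, yet small enough that removing all the $H_j$ from $G[A,B]$ only perturbs degrees by an additive $O(1/K)m$, keeping the cyclic-system parameter at $5/K$. Both requirements are satisfied given $\eps_0\ll1/K\ll\rho\ll1$, so the proof is routine; since it is so close to that of Lemma~\ref{sysdecom}, it is reasonable to omit the details, as the excerpt does.
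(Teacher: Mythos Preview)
Your proposal is correct and follows essentially the same route as the paper's own proof (given in the appendix): Auerbach--Laskar for (a$_1$), an equitable split of each $\mathcal J_{\I}$ into $K/2$ parts for (a$_2$)--(a$_3$), Lemma~\ref{regularsub} followed by a matching decomposition of each $G[A_i,B_{i'}]$ for (a$_5$), and the blow-up of the $C_j$ inside $G[A,B]-\sum H_j$ for (a$_4$) and (a$_6$). One tiny slip in your closing remark: the subsequent balancing uses Lemma~\ref{balanceextensionbip}, not Lemma~\ref{balanceextension}, but this does not affect the proof itself.
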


\subsection{Constructing balanced extensions} \label{sec:BEbip}
Let $\cP=\{A_0,A_1,\dots,A_K,B_0,B_1,\dots,B_K\}$ be a $(K,m,\eps)$-partition of a vertex set $V$, let $\mathcal{Q} := \{A_1,\dots,A_K,B_1,\dots,B_K \}$
and let $C= A_1B_1A_2B_2 \dots A_KB_K$ be a directed cycle.
Given a set $\mathcal{J}$ of balanced exceptional systems with respect to $\mathcal{P}$, we write $\mathcal{J}^*_{\rm dir}:=\{J^*_{\rm dir}:J\in\mathcal{J}\}$.
So $\mathcal{J}^*_{\rm dir}$ is a set of ordered directed matchings and thus it makes sense to construct a balanced extension of $\mathcal{J}^*_{\rm dir}$
with respect to $(\mathcal{Q},C)$. (Recall that balanced extensions were defined in Section~\ref{system}.)

Now consider any of the tuples $(G_{j}, \mathcal{Q}, C_{j}, H_{j}, \mathcal{J}_{j})$ guaranteed by Lemma~\ref{sysdecombip}.
We will apply the following lemma to find a balanced extension of $(\mathcal{J}_j)^*_{{\rm dir}}$ with respect to $(\mathcal{Q},C_{j})$, using edges of $H_{j}$
(after a suitable orientation of these edges). So the lemma is a bipartite analogue of Lemma~\ref{balanceextension}.
However, the proof is more involved than in the two cliques case.

\begin{lemma}\label{balanceextensionbip}
Suppose that $0<1/n \ll \eps  \ll 1/K \ll 1$,
where $n,K \in \mathbb N$.
Let $\cP=\{A_0,A_1,\dots,A_K,B_0,B_1,\dots,B_K\}$ be a $(K,m,\eps)$-partition of a set $V$ of $n$ vertices.
Let $\mathcal{Q} := \{A_1,\dots,A_K,B_1,\dots,B_K \}$ and let $C:= A_1B_1A_2B_2 \dots A_KB_K$ be a directed cycle. 
Suppose that there exist a set $\mathcal{J}$ of edge-disjoint balanced exceptional systems with respect to $\mathcal{P}$ and parameter $\eps$ and a graph $H$ such that the following conditions hold:
\begin{itemize}
\item[{\rm (i)}]  $\mathcal{J}$ can be partitioned into $K^4$ sets $\mathcal J_{\I}$ (one for all $1\le \I\le K$) such that $\mathcal J_{\I}$
consists of $\i$-BES with respect to~$\cP$ and $|\mathcal J_{\I}| \le m/K^4$.
\item[{\rm (ii)}] For each $v \in A \cup B$ the number of all those $J \in \mathcal{J}$ for which $v$ is incident to an edge in $J$ is at most $2 \eps n $.
\item[{\rm (iii)}] $H[A_{i},B_{i'}]$ is a $(11K+ 248/K)\eps m$-regular graph for all $i,i' \le K$.
\end{itemize}
Then there exist an orientation $H_{{\rm dir}}$ of $H$ and a balanced extension $\mathcal{BE}$ of $\mathcal{J}^*_{\rm dir}$ with respect to $(\mathcal{Q},C)$
and parameters $( 12 \eps K , 12)$ such that each path sequence in $\mathcal{BE}$ is obtained from some $J^*_{\rm dir}\in \mathcal{J}^*_{\rm dir}$
by adding edges of $H_{{\rm dir}}$.
\end{lemma}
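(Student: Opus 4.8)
The plan is to mimic the strategy of Lemma~\ref{balanceextension} but to deal with the extra difficulty that a typical $J^*_{\rm dir}\in\mathcal{J}^*_{\rm dir}$ is \emph{not} already locally balanced with respect to $C=A_1B_1\dots A_KB_K$. Recall from Section~\ref{sec:J*2bip} that each $J^*_{\rm dir}$ built from an $\i$-BES $J$ consists of three types of directed edges: $A_{i_1}A_{i_2}$-edges, $B_{i_3}B_{i_4}$-edges and $A_{i_j}B_{i_{j'}}$-edges (all with the $A$-endpoint as tail), and all of its vertices lie in $A_{i_1}\cup A_{i_2}\cup B_{i_3}\cup B_{i_4}$. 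For a $V_s$-extension we must attach, to each edge $f$ of $J^*_{\rm dir}$, a short directed path in $H_{\rm dir}$ so that the resulting path sequence $PS$ (i) terminates all of its constituent paths in a common cluster $V_s$, and (ii) is locally balanced with respect to $C$. The key point is that for an $A_{i_1}A_{i_2}$-edge, a $B_{i_3}B_{i_4}$-edge and an $AB$-edge, the number of ``steps around $C$'' needed to reach the terminal cluster differs by a fixed amount modulo $2K$, and since $C$ winds through the clusters in the interleaved order $A_1B_1A_2B_2\dots$, an edge whose head is in some $A_i$ or $B_{i'}$ can be completed to a path ending anywhere along $C$ by appending a sequence of $H$-edges that wind around $C$. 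Because $H[A_i,B_{i'}]$ is regular for \emph{all} pairs $i,i'$ (condition (iii)), $H_{\rm dir}$ (oriented to wind around $C$) contains, between any two consecutive clusters of $C$, a regular bipartite graph, so such winding paths are available in abundance.

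First I would orient $H$ so that $H_{\rm dir}$ winds around $C$; since $H[A_i,B_{i'}]$ is $r$-regular for $r:=(11K+248/K)\eps m$ and every edge of $C$ joins some $A_i$ to some $B_{i'}$ or vice versa, each $H_{\rm dir}[V_t,V_{t+1}]$ is an $r$-regular bipartite digraph (here the $V_t$ are the $2K$ clusters listed in the cyclic order of $C$). Next, for each $\i$ and each $J\in\mathcal J_{\i}$, I would fix a target cluster — say $A_{i_1}$ — and extend $J^*_{\rm dir}$ into a path sequence $PS_J$ by routing each of its $s'\le e(J)\le\eps n$ edges forward around $C$ into $A_{i_1}$ using fresh edges of $H_{\rm dir}$, choosing these routing edges greedily and vertex-disjointly within each cluster. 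One has to check the two bookkeeping bounds: the number of $PS_J$ that are $V_s$-extensions for a fixed $s$, and the total number of routing edges of $H_{\rm dir}$ used inside any cluster. For the first, by (i) there are at most $m/K^4$ balanced exceptional systems per index tuple and $K^4$ tuples, hence $|\mathcal J|\le m$; distributing the target clusters suitably (there are $2K$ of them, but only certain ones are reachable from a given $\i$ without colliding with other constraints) keeps the count below $12 m/(2K)=6m/K$, comfortably within the $\ell m/k$ bound of (BE2)/(BE3) with $k=2K$, $\ell=12$. For the second, each $J$ contributes at most $e(J)\le\eps n$ routing edges into any one cluster and, by (ii), each vertex of $A\cup B$ is used by at most $2\eps n$ of the $J$; a union bound over the $\le m$ systems, together with $r=(11K+\dots)\eps m\gg$ (number of routing edges needed per cluster), shows the greedy choice never gets stuck and that $H_{\rm dir}$ has enough edges. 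Finally I would verify (BE3): $|V(PS_J)\cap V_t|\le 2e(J)+(\text{routing edges in }V_t)\le 12\eps K m$ for every $t$ (the factor $K$ absorbing the at most $2K$ segments of $C$ a single routing path crosses), and that the ``moreover'' clause holds because each $V_t$ meets at most $6m/K\le 12 m/(2K)$ of the $PS_J$.

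The main obstacle I expect is the \emph{simultaneous} edge-disjointness and vertex-disjointness of the routing paths: each $PS_J-J^*_{\rm dir}$ must be edge-disjoint from every other $PS_{J'}-J^*_{\rm dir}$ (the requirement in (BE1)), while within each individual $PS_J$ the routing must avoid the (few) vertices already occupied by $J^*_{\rm dir}$ and by earlier-routed edges of the same $PS_J$. The way to handle this cleanly is to process the balanced exceptional systems one at a time and, for each, build $PS_J$ cluster by cluster along $C$: at the step where the current partial paths sit in cluster $V_t$ and must advance to $V_{t+1}$, one needs a matching in $H_{\rm dir}[V_t,V_{t+1}]$ saturating the $\le\eps n$ relevant vertices of $V_t$, avoiding the $\le 2\eps n$ vertices of $V_{t+1}$ forbidden by $J^*_{\rm dir}$ (via (ii)) and avoiding all $H$-edges used so far; since $r$ exceeds $\eps n+2\eps n+(\text{total prior usage at a vertex})\le\eps m+2\eps m+m\cdot\eps\cdot(\text{const})$, Hall's theorem (or a trivial greedy degree count, exactly as in the proof of Lemma~\ref{extend1-factor}) supplies the matching. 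Carrying this out for all $J\in\mathcal J$ and all clusters, then orienting any leftover edges of $H$ arbitrarily, yields the desired $H_{\rm dir}$ and $\mathcal{BE}$ with parameters $(12\eps K,12)$.
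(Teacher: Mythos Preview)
Your plan has two real gaps, and they stem from a single misconception about how ``winding'' interacts with local balancedness.

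First, a minor misreading: $J^*_{\rm dir}$ does \emph{not} contain $A_{i_1}A_{i_2}$-edges or $B_{i_3}B_{i_4}$-edges. By the construction in Section~\ref{sec:J*2bip}, every edge of $J^*_{\rm dir}$ is directed from $A$ to $B$ (specifically from $A_{i_1}\cup A_{i_2}$ to $B_{i_3}\cup B_{i_4}$). This does not by itself break your argument, but it matters for the next point.

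The main gap is that routing the head of each $J^*_{\rm dir}$-edge forward around $C$ until it reaches $A_{i_1}$ does \emph{not} make the resulting path sequence locally balanced. Take a single edge $xy$ with $x\in A_{i_1}$, $y\in B_{i_3}$ (and $i_3\ne i_1$), and append winding edges $y\to z_1\to\dots\to z_p\in A_{i_1}$. The winding segment is itself balanced on every $C$-edge it traverses, but the original edge $xy$ is not a $C$-edge: it contributes one edge with initial vertex in $A_{i_1}$ (counted at the $C$-edge $A_{i_1}B_{i_1}$) and one edge with final vertex in $B_{i_3}$ (counted at the $C$-edge $A_{i_3}B_{i_3}$), and nothing compensates for either. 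So the path is imbalanced at $A_{i_1}B_{i_1}$ and at $A_{i_3}B_{i_3}$, and summing over all $e(J^*)$ such paths only multiplies the defect. Winding fixes imbalances only for edges that already lie along $C$; the edges of $J^*_{\rm dir}$ do not.

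This also breaks the ``moreover'' part of (BE3). Your winding paths have length $\Theta(K)$ in general (the cyclic distance from $B_{i_3}$ to $A_{i_1}$ along $C$ is typically of order $K$), so a single $PS_J$ may meet every cluster. Since $|\mathcal J|$ can be as large as $m$, a fixed cluster $V_t$ could meet \emph{all} $PS_J$, far exceeding the bound $\ell m/k=6m/K$.

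The paper avoids both problems by \emph{not} winding. It first extends each $J^*_{s,{\rm dir}}$-edge by a single $B_{i_3}\cup B_{i_4}\to A_{i_1}$ edge of $H$, giving length-two paths ending in $A_{i_1}$ (so $PS_s$ is an $A_{i_1}$-extension). Then, instead of stretching the paths, it adds a \emph{separate vertex-disjoint} compensating edge $f_r$ for every edge $e_r$ of $PS_s$: if $e_r$ is an $A_iB_{i'}$-edge, take $f_r$ to be an $A_{i'}B_i$-edge; if $e_r$ is a $B_iA_{i'}$-edge, take $f_r$ to be a $B_{i'-1}A_{i+1}$-edge. One checks directly that $\{e_r,f_r\}$ is locally balanced for each $r$, so $PS'_s:=PS_s+\{f_1,\dots,f_\ell\}$ is locally balanced. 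Crucially, $PS'_s$ touches only the clusters $A_{i_1},A_{i_2},A_{i_3+1},A_{i_4+1},B_{i_1-1},B_{i_2},B_{i_3},B_{i_4}$ (a bounded list depending only on $\i$), and now condition~(i) gives the required $O(m/K)$ bound in (BE3). The regularity degree $(11K+248/K)\eps m$ is chosen precisely to supply these two rounds of matchings greedily; in particular, you cannot afford to orient all of $H$ to wind around $C$, since almost all pairs $H[A_i,B_{i'}]$ are not between consecutive $C$-clusters and must be used ``diagonally'' as above.
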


The proof proceeds roughly as follows. Consider any $J \in \mathcal{J}_{\I}$.
We extend $J^*_{\rm dir}$ into a locally balanced path sequence in two steps.
For this, recall that $J^*_{\rm dir}$ consists only of edges from $A_{i_1} \cup A_{i_2}$ to $B_{i_3} \cup B_{i_4}$.%
    \COMMENT{Daniela: reworded}
In the first step, we construct a path sequence $PS$ that is an $A_{i_1}$-extension of $J^*_{\rm dir}$ by adding suitable $B_{i_3}A_{i_1}$- and $B_{i_4}A_{i_1}$-edges
from~$H$ to $J^*_{\rm dir}$. In the second step, we locally balance $PS$ in such a way that (BE1)--(BE3) are satisfied.%
    \COMMENT{Deryk: reworded}

\proof
First we decompose $H$ into $H'$ and $H''$ such that $H'[A_i,B_{i'}]$ is a $11 \eps K m$-regular graph for all $i,i' \le K$ and $H'' := H - H'$.
Hence $H''[A_i,B_{i'}]$ is a $248 \eps m/K$-regular graph for all $i,i' \le K$.

Write $\mathcal{J} := \{ J_1, \dots, J_{|\mathcal{J}|} \}$.
For each $s \le |\mathcal{J}|$, we will extend $J^*_{s,{\rm dir}}:=(J_s)^*_{\rm dir}$ into a path sequence $PS_s$ satisfying the following conditions:
\begin{itemize}
	\item[\rm ($\alpha_s$)] Suppose that $J_s \in \mathcal{J}_{\I}$.
Then $PS_s$ is an $A_{i_1}$-extension of $J^*_{s,{\rm dir}}$ consisting of precisely $e(J^*_{s})$ vertex-disjoint directed paths of length two.
	\item[\rm ($\beta_s$)] $V(PS_s) = V(J^*_{s,{\rm dir}}) \cup A'_s$, where $A'_s \subseteq A_{i_1} \setminus V(J^*_{s,{\rm dir}})$ and $|A'_s| = e(J^*_{s})$. 
	\item[\rm ($\gamma_s$)] $PS_s - J^*_{s,{\rm dir}}$ is a matching of size $e(J^*_{s})$ from $B'_s$ to $A'_s$, where $B'_s := V(J^*_{s,{\rm dir}}) \cap (B_{i_3} \cup B_{i_4})$.
	\item[\rm ($\delta_s$)] Let $M_s$ be the set of undirected edges obtained from $PS_s - J^*_{s,{\rm dir}}$ after ignoring all the orientations.
	Then $M_1, \dots, M_s$ are edge-disjoint matchings in $H'$.	
	\item[\rm ($\eps_s$)] $PS_s$ consists only of	edges from $A_{i_1} \cup A_{i_2}$ to $B_{i_3} \cup B_{i_4}$,  and 
from $B_{i_3} \cup B_{i_4}$ to $A_{i_1}$.
\end{itemize}
Note that ($\beta_s$) and ($\gamma_s$) together imply ($\eps_s$).
Suppose that for some $s$ with $1\le s \le |\mathcal{J}|$ we have already constructed $PS_1, \dots, PS_{s-1}$. We will now construct $PS_s$ as follows.
Let $\I$ be such that $J_s \in \mathcal{J}_{\I}$ and let  $H'_s:=H'-(M_1+ \dots +M_{s-1})$.
(BES4) implies that%
\COMMENT{Allan: added stackrel \eqref{BESeq}}
\begin{align}
e( J^*_{s, {\rm dir}} ) = e( J^*_s ) \stackrel{\eqref{BESeq}}{\le} e(J_s) \le \eps n \le 3 \eps K m 
\quad
\text{and}
\quad
|V(J^*_{s,{\rm dir}}) \cap A_{i_1}| \le 3\eps K m .
\label{J^*}
\end{align}
Consider any $s' < s$. Recall from the definition of $J^*_{s',{\rm dir}}$ that $V(J^*_{s',{\rm dir}})$ is the set of all those
vertices in $A\cup B$ which are covered by edges of $J_{s'}$. Together with ($\beta_{s'}$) and ($\gamma_{s'}$) this implies that
a vertex $v \in B$ is covered by~$M_{s'}$ if and only if $v$ is incident to an edge of $J_{s'}$.
Together with (ii) this in turn implies that for all $v \in B$ we have
\begin{eqnarray*}
d_{H'_s}( v , A_{i_1} ) & \ge & d_{H'}( v , A_{i_1} ) - \sum_{s' < s} d_{ M_{s'}}(v) 
\ge 11 K \eps m  - 2\eps n
\ge 11 K \eps m - 5 K \eps m\\
& \stackrel{(\ref{J^*})}{\ge} & |V(J^*_{s,{\rm dir}}) \cap A_{i_1}|  + e(J^*_{s}).
\end{eqnarray*}
Note that $e ( J^*_s )=|V(J^*_{s,{\rm dir}}) \cap (B_{i_3}\cup B_{i_4})|=|B_s'|$. So we can
greedily find a matching $M_s$ of size $e ( J^*_s )$ in $H'_s[A_{i_1}\setminus V(J^*_{s,{\rm dir}}), B_s']$
(which therefore covers all vertices in $B_s'$).
Orient all edges of $M_s$ from $B_s'$ to $A_{i_1}$ and call the resulting directed matching $M_{s, {\rm dir}}$.
Set 
$$
PS_s:= J^*_{s,{\rm dir}} + M_{s, {\rm dir}}.
$$
Note that $PS_s$ consists of precisely $e(J^*_{s})$ directed paths of length two whose final vertices lie in $A_{i_1}$, so ($\alpha_s$)--($\eps_s$) hold by our construction.
This shows that we can obtain path sequences $PS_1, \dots, PS_{|\mathcal{J}|}$ satisfying ($\alpha_s$)--($\eps_s$)
for all $s\le |\mathcal{J}|$.%
   \COMMENT{Daniela: added for all $s\le |\mathcal{J}|$}

The following claim provides us with a `reservoir' of edges which we will use to balance out the edges of each $PS_s$ and thus extend
each $PS_s$ into a path sequence $PS'_s$ which is locally balanced with respect to~$C$.

\medskip

\noindent
{\bf Claim.} 
\emph{$H''$ contains $|\mathcal{J} |$ subgraphs $H''_1, \dots, H''_{|\mathcal{J} |}$ satisfying the following properties
for all $s\le |\mathcal{J} |$ and all $i,i'\le K$:
\begin{itemize}
	\item[\rm (a$_1$)] If $PS_s$ contains an $A_i B_{i'} $-edge, then $H''_s$ contains a matching between $A_{i'}$ and $B_{i}$ of size $30 \eps K m$.
	\item[\rm (a$_2$)] If $PS_s$ contains a $B_i A_{i'} $-edge, then $H''_s$ contains a matching between $A_{i+1}$ and $B_{i'-1}$ of size $30 \eps K  m$.
	\item[\rm (a$_3$)] $H''_1, \dots, H''_{|\mathcal{J} |}$ are edge-disjoint
	and for all $s\le |\mathcal{J} |$ the matchings guaranteed by {\rm (a$_1$)} and {\rm (a$_2$)} are edge-disjoint.
\end{itemize}
}
\noindent
So if $PS_s$ contains both an $A_i B_{i'} $-edge and a $B_{i'-1} A_{i+1} $-edge, then $H''_s$ contains a matching between $A_{i'}$ and $B_{i}$ of size  $60 \eps K m$.
\smallskip

\noindent
To prove the claim, first recall that $H''[A_i,B_{i'}]$ is a $248 \eps m/K$-regular graph for all $i,i' \le K$.
So $H''[A_{i},B_{i'}]$ can be decomposed into $248 \eps m/K$ perfect matchings. 
Each perfect matching can be split into $1/(31\eps K) $ matchings, each of size at least $30 \eps K  m$.
Therefore $H''[A_{i},B_{i'}]$ contains $8m/K^2$ edge-disjoint matchings, each of size at least $30 \eps K  m$.
(i) and ($\eps_s$) together imply that for any $i,i' \le K$, the number of $PS_s$ containing an $A_iB_{i'}$-edge is at most 
$$ \sum_{(\I)\ : \ i \in \{i_1,i_2\}, \ i' \in \{i_3,i_4\}} | \mathcal{J}_{\I} | \le 4 m/K^2.$$
Recall that $H''[A_{i'},B_{i}]$ contains $8m/K^2$ edge-disjoint matchings, each of size at least $30 \eps m$.
Thus we can assign a distinct matching in $H''[A_{i'},B_{i}]$ of size $30 \eps m$ to each $PS_s$ that contains an $A_{i}B_{i'}$-edge. 
Additionally, we can also assign a distinct matching in $H''[A_{i+1},B_{i'-1}]$ of size $30 \eps m$ to each $PS_s$ that contains a $B_{i}A_{i'}$-edge.%
    \COMMENT{Here we use that the number of $PS_s$ containing an $B_{i}A_{i'}$-edge is at most 
$\sum_{(i_2,i_3,i_4)\ : \ i \in \{i_3,i_4\}} | \mathcal{J}_{i',i_2,i_3,i_4} | \le 2 m/K^2.$}
For all $s \le |\mathcal{J}|$, let%
\COMMENT{Andy: set becomes let}
$H_s''$ be the union of all those matchings assigned to $PS_s$.%
    \COMMENT{Deryk: reworded}
Then $H_1'', \dots, H''_{|\mathcal{J} |}$ are as desired in the claim.
\medskip

For each $s\le |\mathcal{J} |$, we will now add suitable edges from $H''_s$ to $PS_s$ in order to obtain a path sequence $PS_s'$ which
is locally balanced with respect to $C$. So fix $s \le |\mathcal{J}|$ and let $e_1, \dots, e_{\ell}$ denote the edges of $PS_s$.
Note that $\ell = 2e(J^*_s) \le 6 K \eps m $ by~($\gamma_s$) and~(\ref{J^*}).%
    \COMMENT{Deryk: had $\ell = 2e(J^*_s) \le 2 \eps n \le 6 K \eps m $}
For each $r \le \ell$, we will find a directed edge $f_r$ satisfying the following conditions: 
\begin{itemize}
	\item[\rm (b$_1$)] If $e_r$ is an $A_i B_{i'} $-edge, then $f_r$ is an $A_{i'}B_{i}$-edge.
	\item[\rm (b$_2$)] If $e_r$ is a $B_i A_{i'} $-edge, then $f_r$ is a $B_{i'-1}A_{i+1}$-edge.
	\item[\rm (b$_3$)] The undirected version of $\{f_1, \dots, f_{\ell}\}$ is a matching in $H''_s$ and vertex-disjoint from $V(PS_s)$.
\end{itemize}
Suppose that for some $r \le \ell$ we have already constructed $f_1, \dots, f_{r-1}$.
Suppose that $e_r$ is an $A_i B_{i'} $-edge. (The argument for the other case is similar.) 
By~(a$_1$), $H''_s[A_{i'},B_{i}]$%
    \COMMENT{Daniela: had $H''_s[A_i,B_{i'}]$}
contains a matching of size $30 K \eps m $.
Note by ($\alpha_s$) and (b$_3$) that 
$$ |V(PS_s \cup \{f_1, \dots, f_{r-1} \})| \le 3 e(J^*_s) + 2(r-1) < 5 \ell \le 30 K \eps m. $$
Hence there exists an edge in $H''_s[A_{i'},B_{i}]$ that is vertex-disjoint from $PS_s \cup \{f_1, \dots, f_{r-1} \} $.
Orient one such edge from $A_{i'}$ to $B_{i}$ and call it $f_r$.
In this way, we can construct $f_1, \dots, f_{\ell}$ satisfying (b$_1$)--(b$_3$).

Let $PS'_s$ be digraph obtained from $PS_s$ by adding all the edges $f_1, \dots, f_{\ell}$.
Note that $PS'_s$ is a locally balanced path sequence with respect to~$C$.
(Indeed, $PS'_s$ is locally balanced since $\{e_r,f_r\}$ is locally balanced for each $r \le \ell$.)
Let $\I$ be such that $J\in\mathcal{J}_{\I}$. Then the following properties hold:
\begin{itemize}
	\item[(c$_1$)] $PS'_s$ is an $A_{i_1}$-extension of $J^*_{s,{\rm dir}}$.
	\item[(c$_2$)] $|V(PS_s') \cap A_i| , |V(PS_s') \cap B_i| \le 12 \eps K m$ for all $i\le K$.	
	\item[(c$_3$)] If $V(PS_s') \cap A_i \ne \emptyset$, then $i \in \{ \I, i_3+1 ,i_4+1 \}$.
	\item[(c$_4$)] If $V(PS_s') \cap B_i \ne \emptyset$, then $i \in \{ i_1-1, \I\}$.
\end{itemize}
Indeed, (c$_1$) is implied by ($\alpha_s$) and the definition of $PS'_s$.
Since $e(PS'_s) = 2 e(PS_s) = 4 e(J^*_s)$, (c$_2$) holds by~\eqref{J^*}.
Finally, (c$_3$) and (c$_4$) are implied by ($\eps_s$), (b$_1$) and (b$_2$) as $J_s \in \mathcal{J}_{\I}$.
 
Note that $PS'_1-J^*_{1,{\rm dir}}, \dots, PS'_{|\mathcal{J}|}-J^*_{|\mathcal{J}|,{\rm dir}}$ are%
     \COMMENT{Daniela: had $PS'_1, \dots, PS'_{|\mathcal{J}|}$}
pairwise edge-disjoint and let $\mathcal{BE}:=\{PS'_1, \dots, PS'_{|\mathcal{J}|}\}$.
We claim that $\mathcal{BE}$ is a balanced extension of $\mathcal{J}^*_{\rm dir}$ with respect to $(\mathcal{Q},C)$ and
parameters $( 12 \eps K, 12)$.
To see this, recall that $\mathcal{Q}=\{A_1,\dots,A_K,B_1,\dots,B_K\}$ is a $(2K,m)$-equipartition
of $V':=V\setminus (A_0\cup B_0)$. Clearly, (BE1) holds with $V'$ playing the role of~$V$.
(c$_3$) and (i) imply that for every $i \le K$ there are at most $6m/K$ $PS_s' \in \mathcal{BE}$ such that $V(PS_s') \cap A_i \ne \emptyset$.
A similar statement also holds for each~$B_i$.
So together with (c$_2$), this implies (BE3), where $2K$ plays the role of $k$ in (BE3).%
    \COMMENT{Need parameters $( 12 \eps K, 12)$ since we have $2K$ clusters and so we need that $6m/K=12m/2K$}
As remarked after the definition of a balanced extension, this implies the `moreover part' of (BE2). So (BE2) holds too.%
    \COMMENT{Daniela: previously had "Given $i \le K$, the number of $J$ such that $J \in \mathcal{J}_{i,i_2,i_3,i_4}$ for some $i_2,i_3,i_4$ is at most $m/K$ by~(i).
So (BE2) follows from (c$_1$), where $2K$ plays the role of $k$ in (BE2)."}
Therefore $\mathcal{BE}$ is a balanced extension, so the lemma follows (by orienting the remaining edges of $H$ arbitrarily).
\endproof

\removelastskip\penalty55\medskip\noindent{\bf Proof of Lemma~\ref{almostthmbip}. }
Apply Lemma~\ref{sysdecombip} to obtain tuples $(G_{j}, \mathcal{Q}, C_{j}, H_{j}, \mathcal{J}_{j})$ for all $j \le K/2$
satisfying (a$_1$)--(a$_6$). Fix $j \le K/2$ and write $\mathcal{J}_{j} :=\{J_{j,1}\dots,J_{j,|\mathcal{J}_{j}|}\}$.
Next, apply Lemma~\ref{balanceextensionbip} with $\mathcal{J}_{j},C_{j}, H_{j},  \eps_{0}$ playing the roles
of $\mathcal{J}, C, H, \eps$ to obtain an orientation $H_{j,{\rm dir}}$ of $H_j$ and a balanced extension $\mathcal{BE}_j$ of $\mathcal{J}_{j}$ with respect to
$(\mathcal{Q}, C_{j})$ and parameters $(12 \eps_0 K,12)$.
(Note that (a$_3$) and (a$_5$) imply conditions (i) and (iii) of Lemma~\ref{balanceextensionbip}.
Condition~(ii) follows from Lemma~\ref{almostthmbip}(d).)
So we can write $\mathcal{BE}_j :=\{PS_{j,1}\dots,PS_{j,|\mathcal{J}_{j}|}\}$ such that $(J_{j,s})^*_{\rm dir} \subseteq PS_{j,s}$ for all $s \le |\mathcal{J}_{j}|$.
Each path sequence in $\mathcal{BE}_j$ is obtained from some $(J_{j,s})^*_{\rm dir}$ by adding edges
of $H_{j,{\rm dir}}$.
Since $(G_{j, {\rm dir}}, \mathcal{Q}, C_{j})$ is a $(2K,m, 4 \mu, 5/K)$-cyclic system by Lemma~\ref{sysdecombip}(a$_6$), we can apply Lemma~\ref{merging} 
as follows:
\smallskip
\begin{center}
  \begin{tabular}{ r | c | c | c | c | c | c | c | c | c | c |c}
& $G_{j, {\rm dir}}$ & $\mathcal{Q}$ &  $C_{j}$  &  $2K$ & $\mathcal{J}^*_{j,{\rm dir}}$ & $|\mathcal{J}_j|$ & $4 \mu$ & $3\rho$ & $12\eps_{0}K$ & $5/K$ & $12$ \\ \hline
plays role of & $G$ & $\mathcal{Q}$ &  $C$ & $k$ & $\mathcal{M}$ & $q$ & $\mu$ & $\rho$ &  $\eps_0$ & $\eps'$ & $\ell$ \\
 \end{tabular}
\end{center}
\smallskip
\noindent
This gives us $|\mathcal{J}_{j}|$ directed Hamilton cycles $C'_{j,1}, \dots, C'_{j,|\mathcal{J}_{j}|}$ in $G_{j, {\rm dir}} + \mathcal{BE}_j$
such that each $C'_{j,s}$ contains $PS_{j,s}$ and is consistent with $(J_{j,s})^*_{\rm dir}$.
Moreover, (a$_4$) implies that%
     \COMMENT{Daniela: added (a$_4$) implies that}
$C'_{j,1} - (J_{j,1})^*_{\rm dir}, \dots, C'_{j,|\mathcal{J}_{j}|} - (J_{j,|\mathcal{J}_{j}|})^*_{\rm dir}$ are edge-disjoint subgraphs of $G_{j, {\rm dir}}+ H_{j, {\rm dir}}$.%
	\COMMENT{Allan: more details added.}
Repeat this process for all $j \le K/2$.

Recall from Lemma~\ref{sysdecombip}(a$_2$) that $\mathcal{J}_{1}, \dots, \mathcal{J}_{K/2}$ is a partition of $\mathcal{J}$.
Thus%
     \COMMENT{Daniela: had "Together with (a$_4$) this implies that" instead of "Thus" and "edge-disjoint directed Hamilton cycles" instead of
"directed Hamilton cycles" below}
we have obtained $|\mathcal{J}|$ directed Hamilton cycles 
$C'_{1}, \dots, C'_{|\mathcal{J}|}$ on $A\cup B$ such that each $C'_{s}$ is consistent with $(J_s)^*_{\rm dir}$ for some $J_s\in \mathcal{J}$
(and $J_s\neq J_{s'}$ whenever $s\neq s'$). 
Let $H_s$ be the undirected graph obtained from $C'_{s}-J_s^* +J_s$ by ignoring all the orientations of the edges.
Since $J_1, \dots, J_{|\mathcal{J}|}$ are edge-disjoint exceptional systems, $H_1, \dots, H_{|\mathcal{J}|}$ are edge-disjoint spanning subgraphs of $G$.
Finally, Proposition~\ref{CES-H2} implies that $H_1, \dots, H_{|\mathcal{J}|}$ are indeed as desired in Lemma~\ref{almostthmbip}.
\endproof


\medskip

{\footnotesize \obeylines \parindent=0pt
B\'ela Csaba
Bolyai Institute,
University of Szeged,
H-6720 Szeged, Aradi v\'ertan\'uk tere 1.
Hungary
\begin{flushleft}
\it{E-mail address}: \tt{bcsaba@math.u-szeged.hu}
\end{flushleft}

Daniela K\"{u}hn, Allan Lo, Deryk Osthus 
School of Mathematics
University of Birmingham
Edgbaston
Birmingham 
B15 2TT
UK
\begin{flushleft}
{\it{E-mail addresses}: \tt{\{d.kuhn,s.a.lo,d.osthus\}@bham.ac.uk}}
\end{flushleft}

Andrew Treglown
School of Mathematical Sciences
Queen Mary, University of London
Mile End Road
London 
E1 4NS
UK
\begin{flushleft}
{\it{E-mail address}: \tt{a.treglown}@qmul.ac.uk}
\end{flushleft}

}

\end{document}